\newcommand{\Z}{\mathbb{Z}}
\newcommand{\Q}{\mathbb{Q}}
\newcommand{\F}{\mathbb{F}}
\newcommand{\C}{\mathbb{C}}
\newcommand{\T}{\mathbb{T}}
\newcommand{\fq}{\mathfrak{q}}
\newcommand{\OO}{\mathcal{O}}
\newcommand{\GG}{{\mathcal{G}}}
\newcommand{\II}{\mathcal{I}}
\newcommand{\TT}{\mathcal{T}}
\newcommand{\RR}{\mathcal{R}}
\newcommand{\QQ}{\mathcal{Q}}
\newcommand{\dual}{\vee}
\newcommand{\ttilde}{\widetilde}
\newcommand{\Inv}{\mathcal{I}}
\newcommand{\derR}{\mathcal{R}}
\newcommand{\De}{D}
\newcommand{\DeP}{\De^{\perf}}
\newcommand{\DePT}{\De^{\perf}_{\tor}}
\newcommand{\Ch}{Ch}
\newcommand{\ChP}{\Ch^{\perf}}
\newcommand{\ChPT}{\Ch^{\perf}_{\tor}}
\DeclareMathOperator{\Gal}{Gal}
\DeclareMathOperator{\pd}{pd}
\DeclareMathOperator{\cyc}{cyc}
\DeclareMathOperator{\Ker}{Ker}
\DeclareMathOperator{\Cok}{Cok}
\DeclareMathOperator{\Image}{Im}
\DeclareMathOperator{\rank}{rank}
\DeclareMathOperator{\ord}{ord}
\DeclareMathOperator{\modif}{mod}
\DeclareMathOperator{\tor}{tor}
\DeclareMathOperator{\Det}{Det}
\DeclareMathOperator{\perf}{perf}
\DeclareMathOperator{\ram}{ram}
\DeclareMathOperator{\Fitt}{Fitt}
\DeclareMathOperator{\Hom}{Hom}
\DeclareMathOperator{\Frac}{Frac}
\DeclareMathOperator{\Spec}{Spec}
\DeclareMathOperator{\id}{id}
\DeclareMathOperator{\Cone}{Cone}
\DeclareMathOperator{\Min}{Min}
\DeclareMathOperator{\Ann}{Ann}
\DeclareMathOperator{\nat}{nat}
\let\oldenumerate\enumerate
\renewcommand{\enumerate}{
   \oldenumerate
   \setlength{\itemsep}{1pt}
   \setlength{\parskip}{0pt}
   \setlength{\parsep}{0pt}
}
\let\olditemize\itemize
\renewcommand{\itemize}{
   \olditemize
   \setlength{\itemsep}{1pt}
   \setlength{\parskip}{0pt}
   \setlength{\parsep}{0pt}
}
\theoremstyle{plain}
\newtheorem{thm}{Theorem}[section]
\newtheorem{lem}[thm]{Lemma}
\newtheorem{conj}[thm]{Conjecture}
\newtheorem{prob}[thm]{Problem}
\newtheorem{prop}[thm]{Proposition}
\newtheorem{cor}[thm]{Corollary}
\theoremstyle{definition}
\newtheorem{defn}[thm]{Definition}
\newtheorem{rem}[thm]{Remark}
\newtheorem{eg}[thm]{Example}
\title{Fitting ideals of $p$-ramified Iwasawa modules over totally real fields}
\author[C. Greither]{Cornelius Greither}
\address{Fakult\"at Informatik, Universit\"at der Bundeswehr M\"unchen,
85577 Neubiberg, Germany}
\email{cornelius.greither@unibw.de}
\author[T. Kataoka]{Takenori Kataoka}
\address{Faculty of Science and Technology, Keio University.
3-14-1 Hiyoshi, Kohoku-ku, Yokohama, Kanagawa 223-8522, Japan}
\email{tkataoka@math.keio.ac.jp}
\author[M. Kurihara]{Masato Kurihara}
\address{Faculty of Science and Technology, Keio University.
3-14-1 Hiyoshi, Kohoku-ku, Yokohama, Kanagawa 223-8522, Japan}
\email{kurihara@z7.keio.jp}
\begin{document}

\begin{abstract} We completely calculate the Fitting ideal of the
classical $p$-ramified Iwasawa module for any abelian extension
$K/k$ of totally real fields, using the shifted Fitting ideals
recently developed by the second author. This generalizes former
results where we had to assume that only $p$-adic places may ramify
in $K/k$. One of the important ingredients is the computation of some complexes
in appropriate derived categories.
\end{abstract}

\maketitle

{\it\small \tableofcontents\par}

\section*{Introduction}

One of the most important themes in Iwasawa theory is 
to study the relationship 
between $p$-adic analytic objects and $p$-adic algebraic objects,
usually formulated as ``main conjectures,'' 
in which the algebraic objects are described by characteristic 
ideals of suitable arithmetic modules.
However, we are recently understanding that 
there exists closer relationship between analytic 
and algebraic objects beyond characteristic ideals.
For example, such relationship can be described by using Fitting ideals.

Namely, in certain cases, 
using the $p$-adic $L$-functions corresponding to the arithmetic objects,  
we can describe the Fitting ideals of certain arithmetic modules,  
which give more information 
than the characteristic ideals.
But in such cases, {\it modified} Iwasawa modules had been always used; 
see for example, \cite{GP15}, \cite{BKS16}, etc.

In this paper we study a much more difficult and subtle 
object, the Fitting ideals of {\it non-modified classical} Iwasawa modules. 
We prove that they can be described by 
the analytic objects and certain ideals constructed from simple 
objects. 
We think it is remarkable that the Fitting ideals 
of classical Iwasawa modules can be also described by some variants of 
$p$-adic $L$-functions.

In order to explain slightly more details, we introduce here notation in this paper.
Throughout this paper, we fix an odd prime number $p$.
We consider a finite abelian extension $K/k$ of totally real 
number fields and the cyclotomic $\Z_p$-extension $K_\infty$ of $K$.
Let $S_p$ be the set of $p$-adic places of $k$.
For any algebraic extension $F/k$,
let $S_{\ram}(F/k)$ be the set of finite places of $k$ 
which are ramified in $F$.
For any finite set $S$ of primes of $k$, let $X_{K_\infty,S}$ be the $S$-ramified Iwasawa module, which is by definition the Galois group of the maximal  pro-$p$-abelian extension
of $K_{\infty}$ unramified outside $S$.
Recall that $X_{K_\infty,S}$ is a module over the Iwasawa algebra $\RR=\Z_p[[\GG]]$,
where $\GG = \Gal(K_{\infty}/k)$ is the profinite Galois group in
this setting.
We simply write $X_{S}$ for $X_{K_\infty,S}$ 
when no confusion arises.

The main theme in this paper is to compute $\Fitt_{\RR}(X_{S_p})$, the Fitting ideal of the Iwasawa module $X_{S_{p}}$.
The module $X_{S_p}$ has been important in 
Iwasawa theory.
For example,
we have the Kummer duality (see Remark \ref{rem:51}) between $X_{S_p}$ and $A_{K(\mu_{p^{\infty}})}^{\omega}$, the 
Teichm\"{u}ller character component of the inductive limit $A_{K(\mu_{p^{\infty}})}$ of the $p$-parts 
of the ideal class groups (full class groups) of $K(\mu_{p^{n}})$. 
The Fitting ideals of the minus part $(A_{K(\mu_{p^{\infty}})}^{-})^{\dual}$ 
are known outside the Teichm\"{u}ller character component 
(for example the method of the first author \cite{Greither07} can 
be applied to the Iwasawa theoretic situation without assuming 
the ETNC, see also \cite{Kur14} by the third author for the $T$-modified class 
group version and the Iwasawa theoretic version),
and 
we know that the Teichm\"{u}ller character component $A_{K(\mu_{p^{\infty}})}^{\omega}$, which is dual to our $X_{S_p}$, is a subtle and interesting 
object.
Note that this duality plays practically no role in the present paper.

In the papers \cite{GK15}, \cite{GK17} by the first and the third
author, and in the paper \cite{GKT19} with Tokio,
we determined $\Fitt_{\RR}(X_{S})$ when $S$ contains $S_{\ram}(K_{\infty}/k) = S_{\ram}(K/k) \cup S_p$.
Therefore, $\Fitt_{\RR}(X_{S_{p}})$ 
was determined in \cite{GK15}, \cite{GK17}, \cite{GKT19}
under the assumption that $S_{\ram}(K/k) \subset S_{p}$, that is, 
$K/k$ is unramified outside $p$.
But the assumption $S_{\ram}(K/k) \subset S_p$, 
is a pretty severe constraint.
In the present paper we completely remove the assumption $S_{\ram}(K/k) \subset S_p$, 
and determine $\Fitt_{\RR}(X_{S_p})$ for any finite abelian extension
$K/k$ of totally real fields. 
Thus we are mainly concerned with the case 
$S_{\ram}(K_{\infty}/k)  \supsetneq S_{p}$. 

The main result of this paper is the following.

\begin{thm}\label{thm:28}
Let $S$ be a finite set of finite places of $k$ such that 
$S \supset S_p \cup S_{\ram}(K/k)$ and $S \neq S_p$.
Put $S'=S \setminus S_{p} \neq \emptyset$.
Suppose the $\mu$-invariant of $X_{S_p}$ vanishes.
Then we have
\[
\Fitt_{\RR}(X_{S_p}) = \Fitt^{[1]}_{\RR}(Z_{S'}^0) \, \theta_{S}^{\modif}.
\]
\end{thm}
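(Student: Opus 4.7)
The plan is to reduce to the case already treated in the previous works by comparing $X_{S_p}$ with the larger module $X_S$ for $S \supset S_p \cup S_{\ram}(K/k)$. By \cite{GK15}, \cite{GK17}, \cite{GKT19}, $\Fitt_{\RR}(X_S)$ is known explicitly in terms of $\theta_S^{\modif}$ together with local data at the primes of $S$. The new ingredient is a systematic passage from $X_S$ down to $X_{S_p}$, which requires carefully accounting for the inertia at the ``extra'' primes $S' = S \setminus S_p$.

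First I would exhibit an explicit short exact sequence of $\RR$-modules
\[
0 \to Y_{S'} \to X_S \to X_{S_p} \to 0,
\]
where $Y_{S'}$ is the submodule of $X_S$ generated by the inertia subgroups at the primes of $K_\infty$ lying over $S'$. Since the primes in $S'$ are coprime to $p$ and only finitely many primes of $K_\infty$ lie above each of them, $Y_{S'}$ admits a concrete description via the decomposition and inertia subgroups of $\GG$, and the auxiliary module $Z_{S'}^0$ should arise naturally from this local data. I would then promote this short exact sequence to a distinguished triangle of perfect complexes in an appropriate derived category of $\RR$-modules; such a derived-categorical lift is advertised in the abstract as a key ingredient of the paper.

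Applying the shifted Fitting ideal formalism of the second author to this triangle should yield a multiplicative relation combining $\Fitt_{\RR}^{[1]}(Z_{S'}^0)$, $\Fitt_{\RR}(X_S)$, and a local correction factor at $S'$. Substituting the formula for $\Fitt_{\RR}(X_S)$ from \cite{GK15}, \cite{GK17}, \cite{GKT19}, which under the $\mu = 0$ hypothesis factors into $\theta_S^{\modif}$ and precisely the local contributions at $S'$ that need to cancel, should collapse the expression to the stated form $\Fitt_{\RR}(X_{S_p}) = \Fitt_{\RR}^{[1]}(Z_{S'}^0) \cdot \theta_S^{\modif}$.

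The main obstacle will be the derived-categorical computation of the local complex attached to the tamely ramified primes in $S'$. Unlike in the earlier papers where $S_{\ram}(K/k) \subset S_p$, the inertia groups at primes in $S'$ are tame at residue characteristic different from $p$ and of a quite different structural nature, so the previous purely module-theoretic arguments no longer apply directly. The $\mu = 0$ hypothesis is needed to ensure that $X_{S_p}$ is a torsion $\RR$-module sitting in a perfect complex of the correct Tor-amplitude, which is what makes the Fitting ideal machinery (and its shifted versions) interact cleanly with the passage between the modules $X_S$, $X_{S_p}$, and the local term $Z_{S'}^0$.
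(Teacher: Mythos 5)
Your starting point --- the class-field-theoretic exact sequence $0 \to Y_{S'} \to X_S \to X_{S_p} \to 0$ (which is \eqref{eq:22} in the paper, with kernel $J_{S'}$) --- is correct, but the central mechanism of your argument does not exist. Fitting ideals are not functorial under quotients: knowing $\Fitt_{\RR}(X_S)$ and the submodule $Y_{S'}$ does not determine $\Fitt_{\RR}(X_{S_p})$, and the shifted Fitting ideal formalism cannot be applied to this sequence because the middle term $X_S$ does not have projective dimension $\le 1$ (indeed its Fitting ideal from \cite{GKT19} is generally non-principal, which already rules out $\pd_{\RR}(X_S)\le 1$). The formalism of $\Fitt^{[1]}$ requires a sequence $0 \to Y \to P \to X \to 0$ with $\pd_{\RR}(P)\le 1$, and then computes $\Fitt^{[1]}(X)$ from $\Fitt(Y)$ --- the roles in your sequence are in the wrong configuration. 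The paper circumvents exactly this obstruction: it forms the cone $C_S$ of the map $\bigoplus_{v\in S'}\derR\Gamma(k_v,\T)\to \derR\Gamma(k_S/k,\T^\dual(1))^\dual[-2]$ from the Poitou--Tate triangle, and the resulting $H^1(C_S)$ is cohomologically trivial and sits in $0 \to X_{S_p} \to H^1(C_S) \to Z_{S'}^0 \to 0$ with $X_{S_p}$ as the \emph{sub}module. That is the precise configuration in which $\Fitt_{\RR}(X_{S_p}) = \Fitt_{\RR}(H^1(C_S))\,\Fitt^{[1]}_{\RR}(Z_{S'}^0)$ follows. Note also that $Z_{S'}^0$ does not arise from the inertia/local-unit submodule $J_{S'}$ as you suggest; it comes from the $H^2$ of the local complexes ($H^2(k_v,\T)\simeq Z_v$) together with the cokernel $\Z_p$ of $Z_{S'}\to H^2$ of the global complex, i.e.\ from a completely different degree of the Poitou--Tate sequence.

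Two further points. First, you never actually produce $\theta_S^{\modif}$: the identification $\Fitt_{\RR}(H^1(C_S)) = (\theta_S^{\modif})$ requires the equivariant main conjecture of Ritter--Weiss (giving $\Det_{\RR}$ of the global complex as $(\theta_S)$) together with an explicit determinant computation of the local complexes at $v\in S'$, producing Euler-factor-like elements $f_v$ with $\theta_S^{\modif}=\theta_S\prod_v f_v^{-1}$; the hoped-for ``collapse'' after substituting the old formula for $\Fitt_{\RR}(X_S)$ has no algebraic justification. Second, the hypothesis $\mu=0$ is not about torsionness or Tor-amplitude of $X_{S_p}$ (torsionness is automatic from the weak Leopoldt conjecture); it is needed because the equivariant main conjecture is only established under vanishing of $\mu$.
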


The definitions of the $\RR$-module $Z_{S'}^0$, of the element 
$\theta_{S}^{\modif}$, and of $\Fitt_{\RR}^{[1]}$ will be 
given in Section \ref{sec:46}.
We introduce in this paper an integral element 
$\theta_{S}^{\modif} \in \RR$, 
which is a kind
of (modified) equivariant Iwasawa power series.
The shifted Fitting ideal $\Fitt_{\RR}^{[1]}$ was introduced 
by the second author in \cite{Kata}. 
It is defined by using
a certain type of 
resolutions and the syzygies produced by them. The main point 
of the theorem is that all quantities on the right hand side are 
computable in principle.

The crucial point in this study
is the case when $\Gal(K/k)$ is a $p$-group.
In fact, the ring $\RR=\Z_p[[\GG]]$ is semi-local, 
and decomposed into direct product of local rings. 
Let $\GG=\GG^{(p')} \times \GG^{(p)}$ be the decomposition of $\GG$ 
such that $\GG^{(p')}$ is a finite group of order prime to $p$ and
$\GG^{(p)}$ is a pro-$p$ group. 
Then each local component of $\RR$ corresponds to an equivalence class of 
characters of $\GG^{(p')}$ (see Subsection \ref{subsecDecGroupRing}),
and accordingly the statement of Theorem \ref{thm:28} can be decomposed.
On the one hand, the trivial character component is the most difficult, and the statement is equivalent to that for the pro-$p$ extension $(K_{\infty})^{\GG^{(p')}}/k$ with Galois group $\GG^{(p)}$.
On the other hand, the non-trivial character components are easier to handle; for example, those components of $\Fitt^{[1]}_{\RR}(Z_{S'}^0)$ can be computed easily (see Corollary \ref{Non-trivialCharacterFittingIdeal}).
In that sense 
the case that $\Gal(K/k)$ is a $p$-group is essential.
However, the proof of Theorem \ref{thm:28} does not involve an explicit reduction
to that case.

The proof of our main result
occupies Sections \ref{sec:60} and \ref{sec:76}; indeed the proof splits naturally
into an algebraic part and an arithmetic part. The former constructs
a certain complex $C_S$ via an exact triangle, 
whose other two terms
come from complexes that arise in
global and local Galois cohomology respectively.
This produces a short exact sequence 
\[
0 \to X_{S_p} \to H^1(C_S)
\to Z^0_{S'} \to 0,
\]
as in Proposition \ref{prop:26}.
Since the middle term turns out to
be cohomologically trivial, this already gives a formula
for  $\Fitt_{\RR}(X_{S_p})$ in terms of $\Fitt^{[1]}_{\RR}(Z_{S'}^0)$: these two quantities differ by a principal factor
governed by the complex $C_S$. 
In the second part of the proof, this
factor is then identified with the (equivariant, modified)
$p$-adic $L$-function $\theta_{S}^{\modif}$.

In Subsection \ref{subsec:61}, we also discuss the natural question under 
what circumstances
$\Fitt_{\RR}(X_{S_p})$ is principal. The rough answer is: 
very rarely (see Proposition \ref{prop:63}).

In Section 4 we present several attempts to make our determination
of $\Fitt_{\RR}(X_{S_p})$ really explicit. The module $Z^0_{S'}$
that occurs in the main result appears to be fairly explicit, 
but a closer look quickly shows that (unless the 
extension $K/k$ is very small in a way) an honestly explicit
description of its first shifted Fitting ideal is
not obvious at all, and in fact turns out to be pretty hard in general.
We present a general method to attack the problem, and show that
it produces in some nice cases a truly explicit result, that
is, a concrete list of generators for $\Fitt^{[1]}(Z^0_{S'})$. 

In the final Section 5 we 
compute $\Fitt^{[1]}_{\RR}(Z_{S'}^0)$
explicitly to determine the Fitting ideal of $X_{S_{p}}$ in the case that 
$K/k$ is {\it cyclic} and satisfies some mild conditions (see Theorems \ref{thm:46} and \ref{thm:47}).
Especially, these results give generalizations of the main result in \cite{Kur11}
by the third author where only the case that $K/k$ is of degree $p$ was treated.
We think that this new look at the third author's previous result is a good way 
to use our main result and to test the techniques of Section 4.

\begin{rem}
Large parts of this paper, as they are
written now, make an essential use of homological algebra.
More precisely speaking, we need
the theory of complexes including the cone construction, and
some theory of derived categories.
We would like to mention here that in
the earliest stages of this manuscript we used different and
more elementary methods. Actually, as far as the proof
of the main result is concerned,  one might
call those other methods old-fashioned, since they mimicked
and partially repeated ingenious arguments of Tate
\cite{Ta66}, which are over fifty years old.
It is interesting to note that already in those old arguments 
one can perceive some central ideas of homological algebra like
the use of Ext groups, but the theory of complexes was not used in the way 
we know it today. Anyway,
it may be reassuring to know that alternative arguments exist,
but we think that using the framework of Galois cohomology
and complexes leads to shorter arguments
and to a better logical structure, so this is what the reader will
actually see in the body of this paper.
\end{rem}


\section{Ingredients for the main result}\label{sec:46}

Our main result in this paper is Theorem \ref{thm:28} 
in Introduction. In this section we define the $\RR$-module $Z_{S'}^0$, 
the element $\theta_S^{\modif} \in \RR$, and 
$\Fitt^{[1]}_{\RR}$ which appeared in the statement of Theorem \ref{thm:28}, 
and also give detailed explanation of several statements mentioned 
in Introduction.

We recall some important notation from the introduction.

Let $p$ be an odd prime number, $K/k$ a finite abelian extension of totally real fields, and $K_{\infty}$ the cyclotomic $\Z_p$-extension of $K$.
Put $\GG = \Gal(K_{\infty}/k)$ and $\RR = \Z_p[[\GG]]$.
We denote by $S_p$ the set of $p$-adic primes of $k$, and by $S_{\ram}(K/k)$ the set of primes of $k$ which are ramified in $K/k$.
Let $X_{S_p} = X_{S_p}(K_{\infty})$ be the $S_p$-ramified Iwasawa module for $K_{\infty}$.

\subsection{Definition of $Z_{S'}^0$} \label{subsection11}

As in Theorem \ref{thm:28}, let $S$ be a finite set of finite places of $k$ such that $S \supset S_p \cup S_{\ram}(K/k)$ and $S \neq S_p$. Put $S' = S \setminus S_p \neq \emptyset$.

For each finite place $v$ of $k$ outside $p$, let $\GG_v$ be the 
decomposition subgroup of $\GG$ at $v$.
Then $\GG_v$ is an open subgroup of $\GG$. Put
\[
Z_v = \Z_p[\GG/\GG_v],
\]
which is regarded as an $\RR$-module; note 
that it is a finitely generated
free $\Z_p$-module. Moreover, put
\[
Z_{S'} = \bigoplus_{v \in S'} Z_v.
\]
Finally, define an $\RR$-module $Z_{S'}^0$ by the exact sequence
\begin{equation}\label{eq:23}
0 \to Z_{S'}^0 \to Z_{S'} \to \Z_p \to 0,
\end{equation}
where the map $Z_{S'} \to \Z_p$ is defined to be the augmentation map
on each summand $Z_v$.  Note that this map is onto for the
precise reason that we assume $S'$ to be nonempty.

\subsection{Definition of $\theta_{S}^{\modif}$}

Again let $S$ be a finite set of finite places of $k$ such that $S \supset S_p \cup S_{\ram}(K/k)$, but we do not assume $S \neq S_p$ in this subsection.

\begin{defn}\label{defn:74}
Let $v$ be a finite place of $k$ outside $p$.
We denote by $N(v)$ the cardinality of the residue field of $k$ at $v$.
Let $\TT_v \subset \GG_v$ be the inertia group, which is finite.
Let $\sigma_v \in \GG/\TT_v$ be the $N(v)$-th power Frobenius automorphism.
\end{defn}

\begin{defn}
For a finite character $\psi: \GG = \Gal(K_{\infty}/k) \to \C^{\times}$, we have 
the $S$-imprimitive $L$-function 
\[
  L_S(s, \psi) = \prod_{v \not\in S} \left(1 - \frac{\psi(\sigma_v)}
    {N(v)^s} \right)^{-1},
\]
where $v$ runs over the finite places of $k$ that are not in $S$.
This infinite product converges on the half plane $\Re(s) > 1$ and 
$L_S(\psi, s)$ has a meromorphic continuation to the whole plane $\C$.
\end{defn}

We fix embeddings $\overline{\Q} \hookrightarrow \overline{\Q_p}$ 
and $\overline{\Q} \hookrightarrow \C$.
Then each finite character $\psi: \GG \to \C^{\times}$ 
can be regarded to have values in $\overline{\Q_p}^{\times}$.
Thus $\psi$ induces a continuous $\Z_p$-algebra homomorphism 
$\RR = \Z_p[[\GG]] \to \overline{\Q_p}$, which we again denote by $\psi$.

Let 
\[
\kappa_{\cyc}: \Gal(k(\mu_{p^{\infty}})/k) \hookrightarrow \Z_p^{\times}
\]
denote the cyclotomic character, and
\[
\omega: \Gal(k(\mu_p)/k) \hookrightarrow \Z_p^{\times}
\]
denote the Teichm\"uller character.
The $\Z_p$-algebra homomorphisms induced by them 
are also written by the same letters.

The $S$-truncated
$p$-adic $L$-functions $\theta_{S}$ are defined via interpolation properties, as follows.

\begin{defn}\label{defn:44} 
Let $\theta_S = \theta_{S, K_{\infty}/k} \in \Frac(\RR)^{\times}$ be the element characterized by
\[
(\kappa_{\cyc}^n\psi)(\theta_S) = L_S(1 - n, \psi \omega^{-n})
\]
for each finite character $\psi$ of $\GG$ and positive integers $n$ 
with $n \equiv 0 \bmod [K(\mu_p):K]$.
The existence of $\theta_S$ follows from Deligne-Ribet \cite{DR80}.
Moreover, it is known that $\theta_S$ is a pseudo-measure 
in the sense of Serre \cite{Serre78}, 
that is, we have $\Ann_{\RR}(\Z_p) \theta_S \subset \RR$.
\end{defn}

We define a modification $\theta_S^{\modif}$ of $\theta_S$ as follows.

\begin{defn}\label{defn:67}
We define $\theta_S^{\modif}= \theta_{S, K_{\infty}/k}^{\modif} 
\in \Frac(\RR)^{\times}$ by the interpolation formula 
\[
  (\kappa_{\cyc}^n\psi)(\theta_S^{\modif}) = L_S(1 - n, \psi \omega^{-n}) 
  \prod_{v} \frac{1 - \psi(\sigma_v)N(v)^n}{1 - \psi(\sigma_v)N(v)^{n-1}}
\]
for $\psi$ and $n$ as in Definition \ref{defn:44}, where $v$ runs 
over the elements in $S'$ such that $\psi$ is unramified at $v$.
\end{defn}

It is easy to see that $\theta_S^{\modif}$ is
uniquely determined by the interpolation properties.
It is also easy to deduce the existence of $\theta_S^{\modif}$ from 
that of $\theta_S$ 
as an element of $\Frac(\RR)^{\times}$. Furthermore,  
we show later in Subsection \ref{Integralityoftheta} the following.

\begin{thm} \label{PropInt} Our modified $p$-adic $L$-function 
$\theta_S^{\modif}$ is integral, namely
$\theta_S^{\modif} \in \RR$.
\end{thm}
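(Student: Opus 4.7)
The plan is to exhibit $\theta_S^{\modif}$ explicitly as $\theta_S$ times a finite product of Euler-type correction factors indexed by $v \in S'$, and then to check that these factors cancel the pseudo-measure defect of $\theta_S$ while introducing no new denominators. For each $v \in S'$, fix a lift $\widetilde{\sigma_v} \in \GG$ of $\sigma_v \in \GG/\TT_v$ and set $e_{\TT_v} := |\TT_v|^{-1}\sum_{\tau \in \TT_v} \tau \in \Q_p[\GG] \subset \Frac(\RR)$. Since $v \nmid p$ and the cyclotomic $\Z_p$-extension is unramified outside $p$, the inertia $\TT_v$ is finite (sitting inside $\Gal(K/k)$), $\kappa_{\cyc}|_{\TT_v}=1$, and $\kappa_{\cyc}(\widetilde{\sigma_v}) = N(v)$; the product $e_{\TT_v}\widetilde{\sigma_v}$ is therefore independent of the chosen lift. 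A direct character evaluation then gives
\[
(\kappa_{\cyc}^n \psi)\!\left(\frac{1 - e_{\TT_v}\widetilde{\sigma_v}}{1 - N(v)^{-1} e_{\TT_v}\widetilde{\sigma_v}}\right) = \begin{cases}\dfrac{1 - \psi(\sigma_v)N(v)^n}{1 - \psi(\sigma_v)N(v)^{n-1}} & \text{if } \psi \text{ is unramified at } v,\\ 1 & \text{otherwise,}\end{cases}
\]
so by the uniqueness clause of Definition~\ref{defn:67} we obtain the identity
\[
\theta_S^{\modif} = \theta_S \cdot \prod_{v \in S'} \mu_v, \qquad \mu_v := \frac{1 - e_{\TT_v}\widetilde{\sigma_v}}{1 - N(v)^{-1} e_{\TT_v}\widetilde{\sigma_v}},
\]
in $\Frac(\RR)^\times$.

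Next I would decompose $\RR = \prod_\chi \RR_\chi$ along the characters $\chi$ of $\GG^{(p')}$. On every non-trivial $\chi$-component the augmentation $\RR \to \Z_p$ is identically zero, so the pseudo-measure defect vanishes and $\theta_S^\chi$ is already integral; the factors $\mu_v^\chi$ are integral there too, since $e_{\TT_v}$ either projects to $0$ on the component (whence $\mu_v^\chi = 1$) or reduces to its prime-to-$p$ idempotent, which lies in $\Z_p[\GG^{(p')}]$. This reduces the claim to the trivial $\chi$-component, i.e.\ to the case that $\Gal(K/k)$ is a pro-$p$ group. In that setting, the pseudo-measure property $(1 - g)\theta_S \in \RR$ for every $g \in \GG$ says that $\theta_S$ has at most a simple pole at the augmentation character, while for each $v \in S'$ the numerator $1 - e_{\TT_v}\widetilde{\sigma_v}$ of $\mu_v$ augments to $0$ and the denominator to the unit $1 - N(v)^{-1}$; since $S' \neq \emptyset$ the zero so provided absorbs the possible pole and yields regularity at the augmentation.

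The main obstacle, and where the bulk of the real work goes, is upgrading this pointwise regularity to honest integrality on all of $\Spec \RR$: the denominator $1 - N(v)^{-1} e_{\TT_v}\widetilde{\sigma_v}$ can in principle vanish at characters other than the augmentation, and the $|\TT_v|^{-1}$ hidden in $e_{\TT_v}$ must also be cleared. My intended approach is to use the idempotent splitting $\Q_p[[\GG]] = e_{\TT_v}\Q_p[[\GG]] \oplus (1-e_{\TT_v})\Q_p[[\GG]]$ to realise $\mu_v$ as the pullback from the smaller Iwasawa algebra $\Z_p[[\GG/\TT_v]]$ of the cleaner ratio $(1-\sigma_v)/(1-N(v)^{-1}\sigma_v)$, and then to invoke the norm-compatibility of Deligne--Ribet pseudo-measures between $K_\infty/k$ and the quotient extension fixed by $\TT_v$ (in which $v$ becomes unramified) to show that $\theta_S$ itself already absorbs these apparent denominators. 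An induction on $|S'|$, adjoining one ramified place at a time, should make the bookkeeping manageable.
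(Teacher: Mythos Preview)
Your setup through the product formula $\theta_S^{\modif}=\theta_S\prod_{v\in S'}\mu_v$ and the passage to the pro-$p$ case matches the paper. The endgame, however, has a genuine gap. The splitting $\Q_p[[\GG]]=e_{\TT_v}\Q_p[[\GG]]\oplus(1-e_{\TT_v})\Q_p[[\GG]]$ is a decomposition of $\Q_p$-algebras, not of $\RR$: in the pro-$p$ case $e_{\TT_v}=|\TT_v|^{-1}N_{\TT_v}\notin\RR$ as soon as $\TT_v\neq 1$. Hence even if the $e_{\TT_v}$-component of $\theta_S^{\modif}$ arises as the ``pullback'' of something integral over $\Z_p[[\GG/\TT_v]]$, this does not force $\theta_S^{\modif}$ itself into $\RR$, because there is no integral section $\Z_p[[\GG/\TT_v]]\to\Z_p[[\GG]]$. (The same issue already bites in your reduction step: when $\chi$ is trivial on $\TT_v^{(p')}$ but $\TT_v^{(p)}\neq 1$, the idempotent $e_{\TT_v}$ projects on the $\chi$-component to $e_{\TT_v^{(p)}}$, which is \emph{not} a prime-to-$p$ idempotent and does not lie in the integral ring, so $\mu_v^\chi$ is not individually integral there either.) Your proposed induction on $|S'|$ inherits the same defect: descending to $K_\infty^{\TT_v}$ yields information in $\Z_p[[\GG/\TT_v]]$, with no integral way back up to $\Z_p[[\GG]]$.

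The paper closes this gap with two ingredients you are missing. First (Lemma~\ref{int1}) it rewrites each $\mu_v=f_v^{-1}$ as a ratio $g_v/h_v$ with both $g_v,h_v\in\RR$ and $h_v=\widetilde{\sigma_v}N(v)^{-1}-1$; the congruence $N(v)\equiv 1\pmod{|\TT_v|}$ is exactly what makes the apparently fractional numerator land in $\RR$. Second (Lemma~\ref{lem:67}), since $F(T)=\prod_{v\in S'} h_v$ has $\mu$-invariant zero, an equivariant Weierstrass preparation argument reduces the statement $\theta_S^{\modif}\in\RR=\Z_p[H][[T]]$ to the character-wise statement $\widetilde\psi(\theta_S^{\modif})\in\OO_\psi[[T]]$ for every character $\psi$ of $H=\Gal(K_\infty/k_\infty)$. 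The induction then runs on $|H|$, not on $|S'|$: for faithful $\psi$ (taking $S=S_p\cup S_{\ram}(K/k)$) every $v\in S'$ is $\psi$-ramified, so all correction factors become $1$ and $\widetilde\psi(\theta_S^{\modif})=\widetilde\psi(\theta_S)$ is integral by the pseudo-measure property; for non-faithful $\psi$ one descends to $K_\infty^{\Ker\psi}$ via the norm-compatibility of Lemma~\ref{int2}(2). The Weierstrass step is precisely the bridge from your ``pointwise regularity'' to honest integrality that your sketch lacks.
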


We note that 
a variant of this element $\theta_{S}^{\rm mod}$ was called 
``Greither's Stickelberger element'' in \cite[Theorem 0.1]{Kur11}, 
and its integrality was proved in \cite[Lemma 2.1]{Kur11} for a special 
type of extension $K/k$ studied there.

We also note that we do not use Theorem \ref{PropInt} in 
the proof of Theorem \ref{thm:28}.

Let us first discuss some basic properties of $\theta_S^{\modif}$.

\begin{lem}\label{int2}
(1) Let $S_1$ be a finite set which contains $S$. 
Then we have 
\[ \theta_{S_1}^{\rm mod} = 
\theta_S^{\rm mod} \prod_{v \in S_1 \setminus S} (1 - \sigma_v).  \]

(2) 
Recall that, for any intermediate field $M_{\infty}$ of $K_{\infty}/k_{\infty}$, 
we also have an element $\theta_{S, M_{\infty}/k}^{\modif} \in \Frac(\Z_p[[\Gal(M_{\infty}/k)]])$ as in Definition \ref{defn:67}.
Then the image of $\theta_{S, K_{\infty}/k}^{\modif}$ in $\Frac(\Z_{p}[[\Gal(M_{\infty}/k)]])$ coincides with 
$\theta_{S, M_{\infty}/k}^{\modif}$.
\end{lem}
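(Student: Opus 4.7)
Both parts follow from uniqueness of the interpolation in Definition~\ref{defn:67}: two elements of $\Frac(\RR)^{\times}$ coincide if their values under a Zariski dense family of arithmetic characters $\kappa_{\cyc}^{n}\psi$ agree.

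\textbf{Part (1).} I would evaluate both sides at $\kappa_{\cyc}^{n}\psi$ for $\psi$ a finite character of $\GG$ and $n$ a positive integer divisible by the larger modulus $[k(\mu_{p}):k]$, so that $\omega^{n}$ is trivial on $\Gal(k(\mu_{p})/k)$. This is a cofinal, hence Zariski dense, subfamily of the characters in Definition~\ref{defn:67}. For such $n$ and every $v \in S_{1}\setminus S$ (which is automatically unramified in $K/k$ since $S_{\ram}(K/k) \subset S$), one has $\psi\omega^{-n}(\sigma_{v}) = \psi(\sigma_{v})$. Combining the elementary factorization
\[
\frac{L_{S_{1}}(1-n,\psi\omega^{-n})}{L_{S}(1-n,\psi\omega^{-n})} = \prod_{v \in S_{1}\setminus S}\bigl(1 - \psi(\sigma_{v})N(v)^{n-1}\bigr)
\]
with the fact that the modification product in Definition~\ref{defn:67} for $S_{1}'$ splits off an extra factor $\prod_{v \in S_{1}\setminus S}\frac{1-\psi(\sigma_{v})N(v)^{n}}{1-\psi(\sigma_{v})N(v)^{n-1}}$, one checks that the new $L$-Euler factors cancel the denominator and leave $\prod_{v \in S_{1}\setminus S}(1 - \psi(\sigma_{v})N(v)^{n})$. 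Since $\kappa_{\cyc}(\sigma_{v}) = N(v)$ (legitimate once $[K(\mu_{p}):K] \mid n$), this equals $(\kappa_{\cyc}^{n}\psi)\bigl(\prod_{v \in S_{1}\setminus S}(1-\sigma_{v})\bigr)$, matching the RHS.

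\textbf{Part (2).} The natural surjection $\GG \twoheadrightarrow \Gal(M_{\infty}/k)$ carries the augmentation ideal to the augmentation ideal; since $\theta_{S,K_{\infty}/k}^{\modif}$ inherits the pseudo-measure property from $\theta_{S}$, its image in $\Frac(\Z_{p}[[\Gal(M_{\infty}/k)]])$ is well-defined. Any character $\kappa_{\cyc}^{n}\psi'$ of $\Gal(M_{\infty}/k)$ inflates to a character of $\GG$ of the same form, and the interpolation formula of Definition~\ref{defn:67} depends only on $\psi$ and $S$, which are unchanged under this inflation. Hence both sides satisfy the same interpolation, and the uniqueness argument from Part~(1) closes the proof.

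\textbf{Main obstacle.} The step most likely to cause trouble is the book-keeping around the Teichm\"{u}ller twist in Part~(1): the modification factor in Definition~\ref{defn:67} uses $\psi(\sigma_{v})$ while the $L$-Euler factor involves $\psi\omega^{-n}(\sigma_{v})$, and the clean cancellation above only works after specializing to exponents $n$ on which $\omega^{n}$ is trivial. Once that cofinal subfamily of characters is fixed, the remainder is a routine manipulation of Euler-factor ratios.
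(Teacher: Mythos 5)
Your proof is correct and follows essentially the same route as the paper: the paper's proof of (1) performs exactly your Euler-factor cancellation, just packaged at the level of elements of $\Frac(\RR)$ (writing $\theta_{S_1} = \theta_S \prod_{v \in S_1 \setminus S}(1-\sigma_v N(v)^{-1})$ and then multiplying by the modification factors), and its proof of (2) likewise just invokes the matching interpolation properties under inflation. Your explicit attention to restricting to the cofinal subfamily of exponents $n$ killing $\omega^{n}$ is a point the paper leaves implicit, but it is the same argument.
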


\begin{proof}
(1) We note that $v \in S_{1} \setminus S$ is unramified in $K_{\infty}/k$ 
because $S$ contains all ramifying primes. 
By the definition of $\theta_{S}$, we have 
\[  \theta_{S_1} = \theta_S \prod_{v \in S_1 \setminus S} 
  (1 - \sigma_v N(v)^{-1}). \]
Then by the definition of $\theta_S^{\modif}$, we obtain
\begin{align}
\theta_{S_1}^{\modif} 
    &= \theta_S^{\modif} \prod_{v \in S_1 \setminus S} 
\left[ (1 - \sigma_v N(v)^{-1}) \cdot \frac{1 - \sigma_v}{1 - \sigma_v N(v)^{-1}} \right]  \\
    &= \theta_S^{\modif} \prod_{v \in S_1 \setminus S} (1 - \sigma_v).
\end{align}

The claim in (2) follows from the interpolation properties of $\theta_{S, K_{\infty}/k}^{\modif}$ and $\theta_{S, M_{\infty}/k}^{\modif}$.
\end{proof}

     \subsection{Shifted Fitting ideals}

We review the theory of the second author \cite{Kata} on Fitting 
invariants. Let $\pd_{\RR}(P)$ be the projective dimension of an 
$\RR$-module $P$. By \cite[Theorem 2.6]{Kata} and 
\cite[Proposition 2.7]{Kata}, we have the following.

\begin{thm}
Let $n$ be a non-negative integer and $X$ a finitely generated 
torsion $\RR$-module. Take an $n$-step
resolution $0 \to Y \to P_1 \to \dots \to P_n \to X \to 0$ of $X$,
in which all modules are finitely generated torsion over $\RR$
and such that $\pd_{\RR}(P_i) \leq 1$ for $i=1,\ldots,n$.
If we put
\[
\Fitt_{\RR}^{[n]}(X) = \left( \prod_{i=1}^n \Fitt_{\RR}(P_i)^{(-1)^i} \right) \Fitt_{\RR}(Y),
\]
then the fractional ideal $\Fitt_{\RR}^{[n]}(X)$ of $\RR$ is 
independent of the choice of the 
$n$-step resolution.
In this sense, $\Fitt_{\RR}^{[n]}(X)$ is well defined.
\end{thm}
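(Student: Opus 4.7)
The plan is to prove independence by induction on $n$, with the essential technical input being a multiplicativity statement for Fitting ideals on short exact sequences of finitely generated torsion $\RR$-modules of projective dimension at most one. My first step would be to establish the preparatory lemma: if
\[
0 \to A \to B \to C \to 0
\]
is exact with $A, B$ finitely generated torsion over $\RR$ and $\pd_{\RR}(A), \pd_{\RR}(B) \leq 1$, then $\pd_{\RR}(C) \leq 1$ as well, and
\[
\Fitt_{\RR}(B) = \Fitt_{\RR}(A) \cdot \Fitt_{\RR}(C).
\]
The proof exploits the fact that any torsion $\RR$-module $M$ with $\pd_{\RR}(M) \leq 1$ admits a \emph{square} presentation $0 \to \RR^r \to \RR^r \to M \to 0$, so $\Fitt_{\RR}(M)$ is generated by the determinant of the presentation matrix, which is a non-zero-divisor; in particular $\Fitt_{\RR}(M)$ is an invertible fractional ideal of $\RR$. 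Applying the horseshoe construction to square presentations of $A$ and $C$ produces a square presentation of $B$ with block upper-triangular matrix, and multiplicativity of the determinant concludes.

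For the inductive step, given an $n$-step resolution $0 \to Y \to P_n \to \cdots \to P_1 \to X \to 0$, I set $Z = \ker(P_1 \to X) = \Image(P_2 \to P_1)$ and split it into
\[
0 \to Z \to P_1 \to X \to 0 \qquad \text{and} \qquad 0 \to Y \to P_n \to \cdots \to P_2 \to Z \to 0.
\]
The second sequence is an $(n-1)$-step resolution of $Z$ of the required type. Unwinding the formula for $\Fitt^{[n]}_{\RR}(X)$ shows it is determined by $\Fitt_{\RR}(P_1)$ and a quantity intrinsic to the $(n-1)$-step resolution of $Z$; by the inductive hypothesis, the latter depends only on $Z$, not on the choice of resolution of $Z$.

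It remains to compare two different choices of the initial step of the resolution. Given two $n$-step resolutions $0 \to Y \to P_\bullet \to X \to 0$ and $0 \to Y' \to P'_\bullet \to X \to 0$, I would form the Schanuel-style fiber product $P_1 \times_X P'_1$ (and iterate deeper into the complexes if needed). This sits in short exact sequences
\[
0 \to Z \to P_1 \times_X P'_1 \to P'_1 \to 0, \qquad 0 \to Z' \to P_1 \times_X P'_1 \to P_1 \to 0,
\]
where $Z, Z'$ are the first syzygies of the two resolutions. Applying the preparatory lemma iteratively and invoking the inductive hypothesis to a common $(n-1)$-step tail extending $P_1 \times_X P'_1$, one finds that the two candidate formulas for $\Fitt^{[n]}_{\RR}(X)$ coincide as fractional ideals.

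The main obstacle is the preparatory lemma itself. Because $\RR = \Z_p[[\GG]]$ is only a semi-local product of Iwasawa algebras rather than a domain, care is needed to consistently work with invertible fractional ideals (governed by determinants) rather than mere Fitting ideal generators, and to control the preservation of $\pd_{\RR} \leq 1$ along short exact sequences — which rests on a careful horseshoe argument combined with the characterization of torsion $\pd_{\RR} \leq 1$ modules via square presentations. Once this lemma is in place, the induction and the Schanuel-type comparison become essentially formal bookkeeping, and independence of $\Fitt^{[n]}_{\RR}(X)$ from the chosen resolution follows.
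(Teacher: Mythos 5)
Your overall architecture (induction on $n$ plus a Schanuel-type fiber-product comparison, resting on a multiplicativity lemma for Fitting ideals) is the right one; the paper itself only cites \cite{Kata} for this theorem, and the argument there has exactly this shape. However, your preparatory lemma is false as stated, and this is a genuine gap rather than a slip of wording. Putting the hypotheses $\pd_{\RR}(A),\pd_{\RR}(B)\le 1$ on the sub and the middle term forces neither $\pd_{\RR}(C)\le 1$ (the long exact sequence of Ext only gives $\pd_{\RR}(C)\le 2$) nor the multiplicativity. Concretely, in the instance $\RR\cong\Z_p[[T]]$ of the theorem (the case $K=k$), consider
\[
0 \to (p,T)/(p^2) \to \RR/(p^2) \to \RR/(p,T) \to 0.
\]
Here $A=(p,T)/(p^2)$ has the square presentation matrix $\begin{pmatrix} p & 0\\ -T & p\end{pmatrix}$, so $\pd_{\RR}(A)\le 1$ and $\Fitt_{\RR}(A)=(p^2)$; also $\pd_{\RR}(B)\le 1$ and $\Fitt_{\RR}(B)=(p^2)$; but $C=\RR/(p,T)$ is finite and nonzero, so $\pd_{\RR}(C)=2$, and $\Fitt_{\RR}(A)\Fitt_{\RR}(C)=(p^3,p^2T)\subsetneq(p^2)=\Fitt_{\RR}(B)$. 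The statement you actually need, and the one that is true, puts the condition on the \emph{quotient}: if all three terms are finitely generated torsion and $\pd_{\RR}(C)\le 1$, then $\Fitt_{\RR}(B)=\Fitt_{\RR}(A)\Fitt_{\RR}(C)$, with no hypothesis on $A$. This is exactly the configuration in your Schanuel step, where the quotients are $P_1$ and $P_1'$ (which do have $\pd\le 1$) while the subobjects are syzygies and the middle term is a fiber product, both of which typically have $\pd>1$. Your horseshoe proof, which needs square presentations of both $A$ and $C$, therefore does not apply where you invoke it; the correct lemma uses a square block only for $C$ and a generally non-square block for $A$, together with an analysis of the maximal minors of the resulting triangular matrix that exploits the fact that the determinant of the $C$-block is a non-zero-divisor.

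A second, smaller gap: the first syzygy $Z$ depends on the chosen $P$ surjecting onto $X$, so comparing two resolutions forces you to compare $\Fitt^{[n-1]}_{\RR}(Z)$ with $\Fitt^{[n-1]}_{\RR}(Z')$ for two different syzygies related only by $0\to Z\to Q\to P'\to 0$ and $0\to Z'\to Q\to P\to 0$; these sequences do not split, since $P$ and $P'$ are not projective. You therefore need multiplicativity of $\Fitt^{[n-1]}_{\RR}$ itself along short exact sequences whose quotient has $\pd\le 1$ --- an auxiliary induction you only gesture at with ``applying the preparatory lemma iteratively.'' Once the corrected base lemma is in place this auxiliary statement does follow by induction, but it must be formulated and proved; as written the argument does not close.
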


 \subsection{Decomposition of group rings} \label{subsecDecGroupRing} 

In general, suppose that $\Delta$ is a finite abelian group of order prime to $p$. 
Then we have a decomposition
$$\Z_{p}[\Delta] \simeq \prod_{\chi} {\mathcal O}_{\chi},$$
%
where $\chi$ runs over equivalence classes of $p$-adic 
characters of $\Delta$ (two characters $\chi_1$, $\chi_2$ are equivalent 
if and only if $\sigma \chi_1=\chi_2$ for some $\sigma \in 
\Gal(\overline{\Q}_{p}/\Q_{p})$), and 
${\mathcal O}_{\chi}=\Z_{p}[\Image(\chi)]$ is a $\Z_{p}[\Delta]$-module 
on which $\Delta$ acts via $\chi$. 
According to this decomposition, each $\Z_p[\Delta]$-module $M$ can be decomposed as 
\[
M = \bigoplus_{\chi} M^{\chi}
\]
with $\OO_{\chi}$-modules $M^{\chi}$.

Now we consider $\GG=\Gal(K_{\infty}/k)$. 
We decompose it into $\GG=\GG^{(p')} \times \GG^{(p)}$ where 
$\GG^{(p')}$ is a finite group of order prime to $p$ and
$\GG^{(p)}$ is a pro-$p$ group.
Since $\Z_{p}[[\GG]]=\Z_{p}[\GG^{(p')}][[\GG^{(p)}]]$, 
applying the above 
decomposition of $\Z_{p}[\Delta]$ to $\Delta=\GG^{(p')}$, we have  
$$\RR=\Z_{p}[[\GG]] \simeq \prod_{\chi} {\mathcal O}_{\chi}[[\GG^{(p)}]].$$
We also have
$$\RR=\Z_{p}[[\GG]] \simeq \Z_{p}[[\GG^{(p)}]] \times
\prod_{\chi \neq 1} {\mathcal O}_{\chi}[[\GG^{(p)}]]$$
where the first component of the right hand side corresponds to the trivial character $\chi=1$.

Here we give a description of $\Fitt_{\RR}^{[1]}(Z_v)$.
Let $v$ be a finite place of $k$ outside $p$.
Recall (Definition \ref{defn:74}) that $\TT_v$ is the inertia group in $K_{\infty}/k$, and $\sigma_v \in \GG/\TT_v$ is the Frobenius automorphism.
Let 
\[
\nu_{v}: \Frac(\Z_p[[\GG/\TT_v]]) 
\to \Frac(\Z_p[[\GG]])
\]
 be the map
induced by the multiplication by the norm element
$N_{\TT_v}=\sum_{\sigma \in \TT_v} \sigma$.

\begin{prop}\label{prop:85}
For each finite place $v$ of $k$ outside $p$, we have
\[
\Fitt_{\RR}^{[1]}(Z_v) = \left(1, \nu_v \frac{1}{\sigma_v-1} \right)
\]
as fractional ideals of $\RR$.
\end{prop}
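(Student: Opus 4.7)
The plan is to exhibit a one-step resolution
\[
0 \to Y \to P \to Z_v \to 0
\]
by torsion $\RR$-modules with $\pd_\RR(P) \le 1$, and then apply the defining formula $\Fitt^{[1]}_\RR(Z_v) = \Fitt_\RR(P)^{-1} \Fitt_\RR(Y)$. Fixing any lift $\tilde\sigma_v \in \GG$ of $\sigma_v \in \GG/\TT_v$, I take $P := \RR/(\tilde\sigma_v - 1)$. Since $\GG_v$ is open in $\GG$ and $\TT_v$ is finite, $\sigma_v$ has infinite order in $\GG/\TT_v$, and a component-by-component check in the decomposition $\RR \cong \prod_\chi \OO_\chi[[\GG^{(p)}]]$ shows that $\tilde\sigma_v - 1$ is a non-zero-divisor in $\RR$. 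Hence $P$ is $\RR$-torsion with $\pd_\RR(P) = 1$ and $\Fitt_\RR(P) = (\tilde\sigma_v - 1)$. The natural surjection $P \twoheadrightarrow Z_v$ has kernel $Y$, which, using $\Ann_\RR(Z_v) = I_{\TT_v}\RR + (\tilde\sigma_v - 1)\RR$, is identified with $I_{\TT_v}\RR / (I_{\TT_v}\RR \cap (\tilde\sigma_v - 1)\RR)$.

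The essential obstacle is that $Y$ is not manifestly cyclic when $\TT_v$ itself is not cyclic (for $v \nmid p$, the $(p')$-part of $\TT_v$ can be non-cyclic, arising from wild inertia). To bypass this, I would use the character decomposition of Subsection \ref{subsecDecGroupRing}. On each component $\OO_\chi[[\GG^{(p)}]]$, elements $\tau \in \TT_v^{(p')}$ act as the scalars $\chi(\tau)$. If $\chi$ is non-trivial on $\GG_v^{(p')}$, character orthogonality forces $Z_v^\chi = 0$, and both sides of the proposition become the unit ideal of $\OO_\chi[[\GG^{(p)}]]$; the equality is then automatic on such components. In the complementary case, $\TT_v^{(p')}$ acts trivially and the effective inertia reduces to $\TT_v^{(p)}$. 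Crucially, $\TT_v^{(p)}$ is cyclic: by local class field theory, for $v \nmid p$ the wild inertia is pro-$\ell$ with $\ell \neq p$ (contributing only to the $(p')$-part of $\TT_v$), while the tame quotient of inertia is pro-cyclic.

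In this remaining case, writing $\TT_v^{(p)} = \langle \tau_p \rangle$ and using the classical identity $\Ann_{\OO_\chi[\TT_v^{(p)}]}(\tau_p - 1) = (N_{\TT_v^{(p)}})$, I would show that multiplication by $\tau_p - 1$ induces an isomorphism
\[
\OO_\chi[[\GG^{(p)}]] / (\tilde\sigma_v - 1,\, N_{\TT_v^{(p)}}) \xrightarrow{\;\sim\;} Y^\chi.
\]
Taking Fitting ideals yields
\[
\Fitt^{[1]}_{\OO_\chi[[\GG^{(p)}]]}(Z_v^\chi)
= \frac{(\tilde\sigma_v - 1,\, N_{\TT_v^{(p)}})}{(\tilde\sigma_v - 1)}
= \left(1,\, \frac{N_{\TT_v^{(p)}}}{\tilde\sigma_v - 1}\right).
\]
Finally, on this $\chi$-component $\nu_v(1/(\sigma_v - 1))$ equals $|\TT_v^{(p')}| \cdot N_{\TT_v^{(p)}}/(\tilde\sigma_v - 1)$, and $|\TT_v^{(p')}|$ is a unit in $\OO_\chi$, so the generated ideal coincides with $(1,\, N_{\TT_v^{(p)}}/(\tilde\sigma_v - 1))$. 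Reassembling across all $\chi$ yields the proposition.
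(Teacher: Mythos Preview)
Your proof is correct and follows essentially the same approach as the paper's: reduce by character decomposition to the case where the effective inertia is the cyclic group $\TT_v^{(p)}$, use the one-step resolution $P=\RR/(\tilde\sigma_v-1)\twoheadrightarrow Z_v$, and identify the kernel $Y$ with $\RR/(\tilde\sigma_v-1,N_{\TT_v^{(p)}})$ via multiplication by a generator of $\TT_v^{(p)}-1$. The paper phrases this last identification as ``the cokernel of $N_{\TT_v}$ on $\RR/(\tilde\sigma_v-1)$'', obtained from the four-term sequence $\RR/(\tilde\sigma_v-1)\xrightarrow{N_{\TT_v}}\RR/(\tilde\sigma_v-1)\xrightarrow{\delta_v-1}\RR/(\tilde\sigma_v-1)\to Z_v\to 0$, which is exactly your map in reverse.

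One organizational difference: you split off the characters $\chi$ that are non-trivial on $\GG_v^{(p')}$, whereas the paper splits off only those non-trivial on $\TT_v^{(p')}$. Your dichotomy is fine, but note that the assertion ``both sides become the unit ideal'' in your first case is not entirely automatic: when $\chi$ is trivial on $\TT_v^{(p')}$ but $\chi(\sigma_v^{(p')})\neq 1$, one must observe that $\sigma_v-1$ becomes a unit on that component (its image modulo the maximal ideal is $\chi(\sigma_v^{(p')})-1\neq 0$), so that $\nu_v\cdot(\sigma_v-1)^{-1}$ is integral there. This is routine but worth stating.
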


\begin{proof}
The statement of the lemma can be decomposed according to characters 
$\chi$ of $\GG^{(p')}$.
We consider the decomposition $\TT_v = \TT_v^{(p')} \times \TT_v^{(p)}$ 
where the order of $\TT_v^{(p')}$ is prime to $p$ and $\TT_v^{(p)}$ is a pro-$p$ group.
If $\chi$ is non-trivial on $\TT_v^{(p')}$, 
we have $Z_v^{\chi} = 0$ and $\chi(\nu_v) = 0$, so the equation holds.
Therefore, we only have to deal with $\chi$ which is trivial on $\TT_v^{(p')}$. 
Thus we may assume $\TT_v = \TT_v^{(p)}$ from the start.

Assume $\TT_v = \TT_v^{(p)}$.
Note that, by local class field theory, $\TT_v$ is a quotient of the unit group $\OO_{k_v}^{\times}$, so in particular $\TT_v^{(p)}$ is a cyclic group.
Hence we can take a generator $\delta_v$ of $\TT_v$.
Take a lift $\ttilde{\sigma_v} \in \GG$ of $\sigma_v \in \GG/\TT_v$.
Then $\GG_v$ is topologically generated by $\ttilde{\sigma_v}$ and $\delta_v$, so we have $Z_v \simeq \RR/(\ttilde{\sigma_v} - 1, \delta_v-1)$.
Thus we have an exact sequence
\[
\RR/(\ttilde{\sigma_v} - 1) \overset{N_{\TT_v}}{\to} \RR/(\ttilde{\sigma_v} - 1) \overset{\delta_v - 1}{\to}
\RR/(\ttilde{\sigma_v} - 1) \to Z_v \to 0.
\]
Observe that the cokernel of $N_{\TT_v}$ here has a presentation $(N_{\TT_v}, \ttilde{\sigma_v}-1)$ as an $\RR$-module.
Hence we obtain
\[
\Fitt_{\RR}^{[1]}(Z_v) = (\ttilde{\sigma_v}-1)^{-1} (N_{\TT_v}, \ttilde{\sigma_v}-1)
= \left(1, \nu_v \frac{1}{\sigma_v-1}\right).
\]
\end{proof}

Now we give an explicit description of $\Fitt^{[1]}_{\RR}(Z_{S'}^0)$ 
for {\it non-trivial} character components.

\begin{prop} \label{PropForNon-trivialCharacter}
For any non-trivial character $\chi$ of $\GG^{(p')}$, we have
$$
\Fitt^{[1]}_{\RR^{\chi}}((Z_{S'}^0)^{\chi})=
\prod_{v \in S'}
\left(1, \nu_v \frac{1}{\sigma_{v}-1} \right)
$$
as fractional ideals of $\RR^{\chi}$.
\end{prop}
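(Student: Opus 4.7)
The plan is to exploit the defining exact sequence of $Z_{S'}^0$ together with the vanishing of $(\Z_p)^\chi$ for non-trivial $\chi$, reducing the claim to a direct sum computation controlled by Proposition \ref{prop:85}.

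First I would take the $\chi$-component (i.e.\ apply the exact idempotent projection $M \mapsto M^\chi$) of the defining sequence \eqref{eq:23}. Since $\GG^{(p')}$ acts trivially on $\Z_p$ and $\chi$ is non-trivial on $\GG^{(p')}$, we have $(\Z_p)^\chi = 0$. The $\chi$-projection being exact, this yields an isomorphism
\[
(Z_{S'}^0)^\chi \;\xrightarrow{\sim}\; (Z_{S'})^\chi \;=\; \bigoplus_{v \in S'} Z_v^\chi
\]
of $\RR^\chi$-modules. Thus the computation of $\Fitt^{[1]}_{\RR^\chi}\bigl((Z_{S'}^0)^\chi\bigr)$ reduces to a computation over a direct sum.

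Next I would invoke multiplicativity of $\Fitt^{[1]}$ over finite direct sums: given $1$-step resolutions $0 \to Y_v \to P_v \to Z_v^\chi \to 0$ with $\pd_{\RR^\chi}(P_v) \le 1$ for each $v \in S'$, their direct sum furnishes a $1$-step resolution of $\bigoplus_v Z_v^\chi$ with projective-dimension-$\le 1$ middle term. Using $\Fitt(A \oplus B) = \Fitt(A)\Fitt(B)$ for ordinary Fitting ideals, the definition of $\Fitt^{[1]}$ as a quotient of Fitting ideals gives
\[
\Fitt^{[1]}_{\RR^\chi}\Bigl(\bigoplus_{v \in S'} Z_v^\chi\Bigr)
= \prod_{v \in S'} \Fitt^{[1]}_{\RR^\chi}(Z_v^\chi).
\]
Finally I would apply the $\chi$-component of Proposition \ref{prop:85}, which (since $\Fitt^{[1]}$ commutes with the idempotent decomposition of $\RR$) gives $\Fitt^{[1]}_{\RR^\chi}(Z_v^\chi) = \bigl(1, \nu_v \tfrac{1}{\sigma_v - 1}\bigr)$ in $\RR^\chi$; multiplying yields the desired formula.

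The whole argument is short and essentially formal once the three ingredients are in place: the vanishing $(\Z_p)^\chi = 0$, the additivity of shifted Fitting ideals under direct sums, and Proposition \ref{prop:85}. The only point that requires a brief verification is the additivity of $\Fitt^{[1]}$ under direct sums, which is not an obstacle but should be stated explicitly since it is the one non-tautological manipulation in the proof. Everything else is routine bookkeeping with idempotent projections.
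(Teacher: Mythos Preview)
Your proposal is correct and follows essentially the same approach as the paper: the paper's proof simply observes that $(\Z_p)^\chi = 0$ gives $(Z_{S'}^0)^\chi = (Z_{S'})^\chi$ and then says the result follows immediately from Proposition~\ref{prop:85}. You have spelled out the ``immediately'' by making explicit the multiplicativity of $\Fitt^{[1]}$ on direct sums and its compatibility with the idempotent decomposition, which is a welcome clarification but not a different argument.
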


\begin{proof}
Since $\chi$ is non-trivial, we have $(\Z_p)^{\chi} = 0$, so $(Z_{S'}^0)^{\chi} = (Z_{S'})^{\chi}$.
Then the assertion follows from Proposition \ref{prop:85} immediately.
\end{proof}

Note that $N_{\TT_v} \in \RR$ goes to $0$ in $\RR^{\chi}$ unless $\chi$ is trivial on $\TT_v$, that is, $\chi(v) = 1$.
Therefore, only places $v \in S'$ with $\chi(v) = 1$ contribute in the product.

Using Theorem \ref{thm:28} and the Proposition \ref{PropForNon-trivialCharacter}, we get 
a {\it complete description} of the Fitting ideal of the non-trivial character 
component of $X_{S_{p}}$.

\begin{cor} \label{Non-trivialCharacterFittingIdeal}
For any non-trivial character $\chi$ of $\GG^{(p')}$, we have
\[
\Fitt_{\RR}(X_{S_p}^{\chi}) = \prod_{v \in S'}
\left(1, \nu_{v}\frac{1}{\sigma_{v}-1} \right) \, (\theta_{S}^{\modif})^\chi.
\]
\end{cor}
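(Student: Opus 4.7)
The plan is to combine Theorem \ref{thm:28} with Proposition \ref{PropForNon-trivialCharacter} by passing to the $\chi$-component of the semilocal ring $\RR$. I interpret $\Fitt_{\RR}(X_{S_p}^\chi)$ in the statement as the $\chi$-component of $\Fitt_{\RR}(X_{S_p})$, which coincides with $\Fitt_{\RR^\chi}(X_{S_p}^\chi)$ under the decomposition $\RR \simeq \prod_\chi \RR^\chi$ of Subsection \ref{subsecDecGroupRing}. The standing hypotheses of Theorem \ref{thm:28}, in particular $\mu(X_{S_p}) = 0$ and $S' \neq \emptyset$, are assumed.

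The key preliminary observation I would make is that this ring decomposition is compatible with both the classical Fitting ideal and the shifted Fitting ideal $\Fitt^{[1]}_{\RR}$. For $\Fitt_{\RR}$ this is standard, since the orthogonal idempotents produce a block-diagonal presentation matrix. For $\Fitt^{[1]}_{\RR}$, I would take a length-$1$ resolution $0 \to Y \to P_1 \to M \to 0$ of a finitely generated torsion $\RR$-module $M$ with $\pd_{\RR}(P_1) \leq 1$, cut by the idempotent associated to $\chi$, and check that the resulting sequence of $\RR^\chi$-modules remains exact, finitely generated torsion, and satisfies $\pd_{\RR^\chi}(P_1^\chi) \leq 1$. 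The defining formula from Subsection 1.3 then yields $\Fitt^{[1]}_{\RR}(M)^\chi = \Fitt^{[1]}_{\RR^\chi}(M^\chi)$.

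Assuming this compatibility, the corollary is a direct substitution. Taking the $\chi$-component of the equation in Theorem \ref{thm:28} gives
\[
\Fitt_{\RR^\chi}(X_{S_p}^\chi) = \Fitt^{[1]}_{\RR^\chi}((Z_{S'}^0)^\chi) \cdot (\theta_{S}^{\modif})^\chi,
\]
where $(\theta_{S}^{\modif})^\chi$ denotes the image of $\theta_{S}^{\modif}$ in $\Frac(\RR^\chi)$. Proposition \ref{PropForNon-trivialCharacter} then supplies
\[
\Fitt^{[1]}_{\RR^\chi}((Z_{S'}^0)^\chi) = \prod_{v \in S'} \left(1, \nu_v \frac{1}{\sigma_v-1}\right),
\]
and assembling these two equalities yields exactly the stated formula.

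There is no substantive obstacle once Theorem \ref{thm:28} and Proposition \ref{PropForNon-trivialCharacter} are in hand; the corollary is essentially a formal specialization. The only point that deserves genuine care is the compatibility of $\Fitt^{[1]}$ with the character decomposition, and that is a routine consequence of the definition together with the fact that idempotent truncation preserves exactness, torsion, and the projective-dimension bound.
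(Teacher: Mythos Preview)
Your proposal is correct and follows exactly the same approach as the paper, which simply states the corollary as an immediate consequence of Theorem \ref{thm:28} and Proposition \ref{PropForNon-trivialCharacter} without further argument. If anything, you are more thorough than the paper by explicitly spelling out the compatibility of $\Fitt^{[1]}_{\RR}$ with the idempotent decomposition, a point the paper leaves implicit.
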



\section{Proof of main result (I)}\label{sec:60}

     \subsection{Some facts on arithmetic complexes}

We collect some facts on local and global arithmetic complexes.
A comprehensive reference is Nekov\'{a}\v{r} \cite{Nek}.

Let $k_S/k$ be the maximal $S$-ramified algebraic extension.
For each finite place $v$ of $k$, let $k_v$ be the completion at $v$.
Fix an algebraic closure $\overline{k_v}$ of $k_v$ and an inclusion $k_S \hookrightarrow \overline{k_v}$ over $k$.
Then any representation of $\Gal(k_S/k)$ 
will yield a representation of $\Gal(\overline{k_v}/k_v)$.

We denote by $(-)^{\dual}$ the Pontryagin dual of a module.
This symbol will also be used for the corresponding
construction in derived categories.
As usual, we denote by $\mu_{p^m}$ the group of $p^m$-th roots of unity.
Let $\Z_p(1) = \varprojlim_m \mu_{p^m}$ be the Tate module.
Let  $\chi_{\GG}: \Gal(k_S/k) \twoheadrightarrow \Gal(K_{\infty}/k) = \GG \hookrightarrow \RR^{\times}$ be the tautological representation.
We consider
\[
\T = \Z_p(1) \otimes_{\Z_p} \RR(\chi_{\GG}^{-1}),
\]
which is an $\RR$-module of rank one with a certain action of $\Gal(k_S/k)$.

We shall study the complexes
\begin{equation}\label{eq:24}
\derR\Gamma(k_S/k, \T), \qquad \derR\Gamma(k_S/k, \T^{\dual}(1))^{\dual}, 
  \qquad 
\derR\Gamma(k_v, \T), \qquad \derR\Gamma(k_v, \T^{\dual}(1))^{\dual},
\end{equation}
which are defined using the continuous cochain complexes for the profinite
groups $\Gal(k_S/k)$ and $\Gal(\overline{k_v}/k_v)$ 
(see \cite[(3.4.1)]{Nek}).

We denote by $\DeP(\RR)$ the derived category of perfect complexes 
of $\RR$-modules, and by $\DePT(\RR)$ the subcategory of objects 
whose cohomology groups are torsion as $\RR$-modules.
We will see that most of the complexes we treat in this paper are 
objects of $\DePT(\RR)$. First we recall the following fact.

\begin{prop}[{\cite[Proposition (4.2.9)]{Nek}}]\label{prop:39}
The ``global complex''
\[
\derR\Gamma(k_S/k, \T),
\]
as well as the ``local complexes''
\[
\derR\Gamma(k_v, \T)
\]
for any finite place $v$ of $k$, are objects of $\DeP(\RR)$.
\end{prop}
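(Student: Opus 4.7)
My plan is to verify perfectness of each complex over $\RR = \Z_p[[\GG]]$ by invoking the standard criterion that a bounded complex of $\RR$-modules with finitely generated cohomology is perfect if and only if it has finite Tor-amplitude, or equivalently if and only if the derived base change $C \otimes_{\RR}^{L} (\RR/\mathfrak{m})$ remains bounded for every maximal ideal $\mathfrak{m}$ of $\RR$. Since $\T$ is free of rank one as an $\RR$-module with a continuous action of $\Gal(k_S/k)$, the complexes in question are computed by continuous cochain complexes with coefficients in a continuous $\RR$-module.

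First I would establish boundedness via cohomological dimension. Under our standing hypotheses ($p$ odd and $k$ totally real), the group $\Gal(k_S/k)$ has $p$-cohomological dimension at most $2$ whenever $S \supset S_p$, and each local Galois group $\Gal(\overline{k_v}/k_v)$ likewise has $p$-cohomological dimension at most $2$. Consequently, both $\derR\Gamma(k_S/k,\T)$ and $\derR\Gamma(k_v,\T)$ have cohomology concentrated in degrees $[0,2]$.

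Next I would establish finite generation and finite Tor-amplitude jointly by derived base change. The key technical input is that, for $G = \Gal(k_S/k)$ or $G = \Gal(\overline{k_v}/k_v)$ and any surjection $\RR \twoheadrightarrow R'$ onto a Noetherian quotient, there is a natural quasi-isomorphism
\[
R' \otimes_{\RR}^{L} \derR\Gamma(G,\T) \;\simeq\; \derR\Gamma(G,\, \T \otimes_{\RR}^{L} R').
\]
Taking $R'$ to be the residue field at a maximal ideal of $\RR$, the reduction $\T \otimes_{\RR} R'$ becomes a finite $G$-representation on a finite-dimensional $\F_q$-vector space, whose continuous Galois cohomology is finite-dimensional by the classical finiteness theorems (Tate's theorems in the local case, and the corresponding finiteness for $H^{i}(k_S/k, M)$ for $M$ finite of $p$-power order in the global case, valid because $S$ contains $S_p$ and the archimedean places). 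This verifies the criterion, and hence perfectness.

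The main obstacle is the technical justification of the derived base-change quasi-isomorphism at the level of continuous cochain complexes, where the coefficients form a large profinite $\RR$-module; this is precisely the content of Nekov\'a\v{r}'s framework in \cite[Chapter 4]{Nek}, on which we rely. An alternative route would be to write $\T$ as an inverse limit of $\Z_p$-representations along finite levels $K_n/k$, apply Shapiro's lemma to identify the complex with $\derR\Gamma$ of $\Z_p(1)$ over $K_n$, establish perfectness at each finite level over the concrete group ring $\Z_p[\Gal(K_n/k)]$, and pass to the limit using Mittag--Leffler-type vanishing to control the derived inverse limit.
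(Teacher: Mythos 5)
The paper offers no proof of this statement at all: it is quoted verbatim as \cite[Proposition (4.2.9)]{Nek} and used as a black box. Your sketch is a correct reconstruction of the standard argument underlying that citation (boundedness from the $p$-cohomological dimension bounds for $p$ odd, then perfectness via derived base change to residue fields together with Tate's local finiteness theorem and the finiteness of $H^i(k_S/k,M)$ for finite $M$, which uses $S\supset S_p$), and the technical point you flag --- justifying the base-change quasi-isomorphism for continuous cochains with big coefficients --- is exactly the content of Nekov\'a\v{r}'s framework, so nothing is missing relative to what the paper itself assumes.
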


The following two propositions are interpretations of the 
local Tate duality and the global 
Poitou-Tate exact sequence, respectively.
It would be certain that they have been known 
since Grothendieck's works, and are explicitly mentioned in 
Nekov\'{a}\v{r} \cite{Nek}.
We use \cite[(2.9.1)]{Nek} to identify the Pontryagin dual 
and the Matlis dual \cite[(2.3)]{Nek}.

\begin{prop}[{\cite[Proposition (5.2.4)(i)]{Nek}}]\label{prop:40}
We have an isomorphism
\[
\derR\Gamma(k_v, \T) \simeq \derR\Gamma(k_v, \T^{\dual}(1))^{\dual}[-2].
\]
\end{prop}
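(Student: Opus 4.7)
The plan is to derive this local Tate duality in derived-category form for the big coefficient $\T$ from classical local Tate duality for finite Galois modules, by a limit argument. The key is that $\RR = \Z_p[[\GG]]$ is a profinite ring and $\T$ is free of rank one over $\RR$, so $\T$ is naturally an inverse limit of finite Galois modules.

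First I would write $\T = \varprojlim_n \T/J_n$ where $\{J_n\}$ is a decreasing sequence of open ideals of $\RR$ with $\bigcap_n J_n = 0$. Each $\T/J_n$ is a finite $\Gal(\overline{k_v}/k_v)$-module, so classical local Tate duality yields a perfect pairing
\[
  H^i(k_v, \T/J_n) \times H^{2-i}\bigl(k_v, (\T/J_n)^{\dual}(1)\bigr) \to \Q_p/\Z_p
\]
for $i = 0,1,2$, which in the derived category takes the form
\[
  \derR\Gamma(k_v, \T/J_n) \simeq \derR\Hom_{\Z_p}\bigl(\derR\Gamma(k_v, (\T/J_n)^{\dual}(1)), \Q_p/\Z_p\bigr)[-2].
\]

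Next I would pass to the limit. On the left, continuity of Galois cohomology for compact coefficients gives $\derR\Gamma(k_v, \T) \simeq \derR\varprojlim_n \derR\Gamma(k_v, \T/J_n)$, and Proposition \ref{prop:39} guarantees that this is in fact a bounded complex of finitely generated projective $\RR$-modules, so the relevant $\varprojlim^1$ vanishes. On the other side, since $\T^{\dual}(1) = \Hom_{\Z_p}(\T, \Q_p/\Z_p)(1) = \varinjlim_n (\T/J_n)^{\dual}(1)$ is a discrete $p$-torsion module, Galois cohomology commutes with this filtered colimit, so
\[
  \derR\Gamma(k_v, \T^{\dual}(1)) \simeq \varinjlim_n \derR\Gamma(k_v, (\T/J_n)^{\dual}(1)).
\]
Applying Pontryagin duality turns this colimit into the required inverse limit, and the identification of the Pontryagin dual with the Matlis dual recorded in \cite[(2.9.1)]{Nek} matches the right-hand side with the assembled pairings.

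The main obstacle is the compatibility of the finite-level dualities at the derived-category level: I would need to check that the pairings at each step $n$ are compatible with the transition maps (so that passing to the limit produces a well-defined morphism in $\DeP(\RR)$), and that the Pontryagin dual interacts with $\derR\varprojlim$ and $\varinjlim$ as needed. Once one knows both complexes are perfect (Proposition \ref{prop:39}), these compatibilities become formal, which is precisely why this proposition appears in Nekov\'a\v{r} as \cite[Proposition (5.2.4)(i)]{Nek}; in practice I would simply cite that result and restrict to pointing out how to translate his general formalism into the shape stated here.
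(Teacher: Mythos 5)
The paper offers no proof of this proposition beyond the citation to Nekov\'a\v{r} \cite[Proposition (5.2.4)(i)]{Nek} (together with the identification of Pontryagin and Matlis duals via \cite[(2.9.1)]{Nek}), and your proposal ultimately does the same, while additionally sketching the standard limit argument from finite-level local Tate duality that underlies Nekov\'a\v{r}'s result. Your sketch is sound and consistent with the paper's treatment, so there is nothing to correct.
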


\begin{prop}[{\cite[Proposition (5.4.3)(i)]{Nek}}]\label{prop:41}
We have a distinguished triangle
\[
\derR\Gamma(k_S/k, \T) \to \bigoplus_{v \in S} \derR\Gamma(k_v, \T) 
\to \derR\Gamma(k_S/k, \T^{\dual}(1))^{\dual}[-2] \to,
\]
where the first morphism is obtained by the localization, and
the second morphism by the localization and the duality in Proposition \ref{prop:40}.
\end{prop}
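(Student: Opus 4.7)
The plan is to realize the third term as a shift of compactly supported cohomology, and then to identify the latter with the stated Pontryagin dual via Artin--Verdier global duality. First I would introduce $\derR\Gamma_c(k_S/k, \T)$ as the mapping fiber of the global-to-local localization map, that is, by the tautological distinguished triangle
\[
\derR\Gamma_c(k_S/k, \T) \to \derR\Gamma(k_S/k, \T) \to \bigoplus_{v \in S} \derR\Gamma(k_v, \T) \to.
\]
Rotating this triangle already produces something of the shape of Proposition \ref{prop:41}, with $\derR\Gamma_c(k_S/k, \T)[1]$ occupying the third slot. Because $k$ is totally real and $p$ is odd, and because complex conjugation acts by $-1$ on $\Z_p(1)$ and trivially on $\RR(\chi_{\GG}^{-1})$ (the latter since $K_\infty$ is totally real), the archimedean local complexes are acyclic; hence no archimedean correction term is required and the direct sum genuinely runs only over the finite set $S$.

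Next I would invoke the $\RR$-equivariant global Artin--Verdier duality developed in \cite{Nek}, which supplies a canonical quasi-isomorphism
\[
\derR\Gamma_c(k_S/k, \T) \simeq \derR\Gamma(k_S/k, \T^{\dual}(1))^{\dual}[-3].
\]
Shifting by $[1]$ replaces $\derR\Gamma_c(k_S/k, \T)[1]$ by $\derR\Gamma(k_S/k, \T^{\dual}(1))^{\dual}[-2]$, yielding exactly the third term of the triangle in the statement. The hypothesis $S \supset S_p$ is essential here, because Artin--Verdier duality requires $p$ to be invertible on the relevant \'{e}tale site.

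It remains to verify that the connecting morphism, which a priori is the tautological boundary coming from the rotation, coincides with the composite of termwise local duality (Proposition \ref{prop:40}) with the dual of the localization map for $\T^{\dual}(1)$. I would check this by writing out at the cochain level the cup-product pairings that define local Tate duality and global Artin--Verdier duality, and observing that they assemble into one coherent family; this is essentially the complex-theoretic content of the classical Poitou--Tate nine-term exact sequence. The main obstacle is precisely this last compatibility: promoting Poitou--Tate from a long exact sequence of cohomology groups to a distinguished triangle with explicitly described morphisms demands careful bookkeeping of signs, shifts, and orientations at the cocycle level. This bookkeeping is installed once and for all in the framework of \cite[Ch.~5]{Nek}, so in practice I would conclude by invoking Proposition (5.4.3)(i) of that reference, after verifying that $\T$ satisfies the admissibility hypotheses imposed there.
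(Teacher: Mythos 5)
Your proposal is correct and in the end rests on the same foundation as the paper, which gives no independent proof but simply cites Nekov\'{a}\v{r}'s Proposition (5.4.3)(i); your sketch of the standard route (compactly supported cohomology as the fiber of localization, vanishing of archimedean terms since $p$ is odd and complex conjugation acts by $-1$ on $\T$, global duality, rotation) is an accurate expansion of what that citation encapsulates, and you rightly identify the compatibility of the boundary map with local duality as the point that must be delegated to \cite{Nek}.
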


\begin{rem} 
It should also be possible to deduce this exact triangle from
the paper \cite{BuFl98}. The notation there is closer in spirit to ours
than Nekov\'{a}\v{r}'s,
but there is the disadvantage that everything is formulated at finite
level, and we have not checked whether the transition to the 
projective limit offers
problems. A little more precisely: The definition of the cone in \cite{BuFl98}, 
formula (3) on p.1345, gives an exact triangle
\[
\derR\Gamma(k_S/k, \Z_p(1)) \to \bigoplus_{v \in S} \derR\Gamma(k_v, \Z_p(1)) 
\to \mathcal C,
\]
where $\mathcal C$ denotes the cone. Then with the method of loc.cit.
p.1357, see equation (36) in particular, it should be possible
to identify $\mathcal C$ with 
$\derR\Gamma(k_S/k, \Z_p(1)^\vee(1))^\vee [-2]$. Again,
we gloss over some technical problems and we do not try to discuss the
passage from $\Z_p(1)$ (finite level) to $\T$ (infinite level).
\end{rem}

Next we compute the cohomology groups of the global and local complexes.

\begin{defn}
Let $v$ be a finite place of $k$ outside $p$.
Put
\[
J_v = J_v(K_{\infty}) = \varprojlim_n \mu_{p^{\infty}}(K_n \otimes_k k_v),
\]
where $\mu_{p^{\infty}}(K_n \otimes_k k_v)$ denotes the $p$-primary 
subgroup of $(K_n \otimes_k k_v)^{\times}$ and the inverse limit 
is taken with respect to the norm maps.
Then $J_v$ is naturally an $\RR$-module and its structure 
is as in Remark \ref{rem:37}. Put 
\[
J_{S'} = \bigoplus_{v \in S'} J_v.
\]
\end{defn}

Let $X_S = X_S(K_{\infty})$ be the $S$-ramified Iwasawa module.
From global class field theory, we have an exact sequence
\begin{equation}\label{eq:22}
0 \to J_{S'} \to X_S \to X_{S_p} \to 0
\end{equation}
where the injectivity of $J_{S'} \to X_{S}$ 
follows from the weak Leopoldt conjecture.

\begin{rem}\label{rem:37}
Take a place $w$ of $K_{\infty}$ above $v$, and put
\[
J_w = J_w(K_{\infty}) = \varprojlim_n \mu_{p^{\infty}}(K_{n, w}).
\]
Here $K_{n,w}$ denotes the completion of $K_n$ at the place below $w$.
Then we have $J_v \simeq \RR \otimes_{\RR_v} J_w$, where 
$\RR_v = \Z_p[[\GG_v]]$.

If $\mu_{p^{\infty}}(K_{\infty, w}) = 0$, then we have $J_w = 0$ 
and thus $J_v = 0$. Otherwise, we have $\mu_{p^{\infty}} 
\subset (K_{\infty, w})^{\times}$ and $J_w \simeq \Z_p$.
In the latter case, the action of $\GG_v$ on $J_w$ is given 
by the cyclotomic character $\kappa_v: \GG_v \to \Z_p^{\times}$ 
at $v$, and we have $J_v \simeq \Z_p[\GG/\GG_v]$ as a $\Z_p$-module.
\end{rem}

\begin{prop}\label{prop:35}
We have
\[
H^i(k_S/k, \T^{\dual}(1))^{\dual} \simeq 
\begin{cases}
	X_S & (i = 1)\\
	\Z_p & (i = 0)\\
	0 & (i \neq 0, 1)
\end{cases}
\]
and
\[
H^i(k_v, \T) \simeq 
\begin{cases}
	 J_v & (i=1)\\
	Z_v & (i=2)\\
	0 & (i \neq 1, 2)
\end{cases}
\]
for $v \nmid p$
where $Z_v$ was defined in Subsection \ref{subsection11}.
\end{prop}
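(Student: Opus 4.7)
The plan is to compute both the global and local cohomology uniformly via Shapiro's lemma, reducing to Galois cohomology at finite level and then passing to the limit using Kummer theory and local class field theory. Since at each finite layer $\T$ corresponds to $\mathrm{Ind}_{G_{K_n,S}}^{G_{k,S}} \Z_p(1)$ and $\T^{\dual}(1)$ to $\mathrm{Ind}_{G_{K_n,S}}^{G_{k,S}} \Q_p/\Z_p$, Shapiro yields the standard Iwasawa-theoretic identifications
\[
H^i(k_S/k, \T^{\dual}(1)) \simeq \varinjlim_n H^i(G_{K_n,S}, \Q_p/\Z_p) = H^i(G_{K_\infty,S}, \Q_p/\Z_p),
\]
and
\[
H^i(k_v, \T) \simeq \varprojlim_n \bigoplus_{w \mid v} H^i(K_{n,w}, \Z_p(1)),
\]
the first via restriction and the second via corestriction.

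For the global side, $H^0(G_{K_\infty,S}, \Q_p/\Z_p) = \Q_p/\Z_p$ carries the trivial $\GG$-action, so its Pontryagin dual is $\Z_p$. By the very definition of $X_S$ one has $G_{K_\infty,S}^{\mathrm{ab},p} = X_S$, hence
\[
H^1(G_{K_\infty,S}, \Q_p/\Z_p) = \Hom_{\mathrm{cts}}(X_S, \Q_p/\Z_p) = X_S^{\dual},
\]
whose Pontryagin dual is $X_S$ by reflexivity. Vanishing for $i \geq 3$ follows from $\mathrm{cd}_p(G_{K_n,S}) \leq 2$ (using that $p$ is odd, so real archimedean places contribute nothing). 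The essential input is $H^2(G_{K_\infty,S}, \Q_p/\Z_p) = 0$, which is precisely the weak Leopoldt conjecture for the cyclotomic $\Z_p$-extension---a classical theorem of Iwasawa---so no conjecture is actually invoked.

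For the local side at $v \nmid p$, the pointwise computations are standard. The subgroup of $p$-power roots of unity in $K_{n,w}$ is finite, so its Tate module vanishes and $H^0(K_{n,w}, \Z_p(1)) = 0$. By Kummer theory, $H^1(K_{n,w}, \Z_p(1)) = (K_{n,w}^{\times})^{\wedge}_p$, which splits as $\mu_{p^{\infty}}(K_{n,w}) \oplus \Z_p$ because the principal units are pro-$\ell$ with $\ell \neq p$. The local invariant map identifies $H^2(K_{n,w}, \Z_p(1)) = \Z_p$, and $H^i = 0$ for $i \geq 3$ by local cohomological dimension. Taking the norm inverse limit, the $\mu_{p^{\infty}}$-part of $H^1$ contributes $J_w$ by definition, and summing over $w \mid v$ gives $J_v$; the $\Z_p$-summand generated by a uniformizer dies because the norm acts on it as multiplication by the local degree $[K_{n+1,w}:K_{n,w}]$, which is eventually $p$. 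For $H^2$, compatibility of the invariant map with corestriction makes each transition the identity on $\Z_p$, and the $\GG$-module structure assembles into
\[
\varprojlim_n \Z_p[G_n/D_{n,v}] = \Z_p[\GG/\GG_v] = Z_v,
\]
using that $\GG_v$ is open in $\GG$ for $v \nmid p$.

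The principal obstacle is the weak Leopoldt input on the global side; everything else is routine bookkeeping once the Shapiro reduction is in place. On the local side, the only subtlety is tracking the $\GG$-module structure through the inverse limit, but this is clean because decomposition subgroups stabilize along the tower.
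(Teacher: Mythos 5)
Your proof is correct. The global half coincides with the paper's argument: Shapiro's lemma reduces to $H^i(k_S/K_\infty,\Q_p/\Z_p)$, the identification $H^1=X_S^{\dual}$ is definitional, and the only nontrivial input is the vanishing of $H^2$, i.e.\ weak Leopoldt, which (as you rightly note) is a theorem for the cyclotomic $\Z_p$-extension. For the local half you take a genuinely different route. The paper first invokes the duality $H^i(k_v,\T)\simeq H^{2-i}(k_v,\T^{\dual}(1))^{\dual}$ of Proposition \ref{prop:40} and computes the right-hand side as $H^{2-i}(K_{\infty,w},\Q_p/\Z_p)^{\dual}\otimes_{\RR_v}\RR$; this disposes of every degree except $i=1$ in one stroke (in particular $H^2(k_v,\T)\simeq Z_v$ falls out of $H^0(K_{\infty,w},\Q_p/\Z_p)=\Q_p/\Z_p$), and only for $i=1$ does it descend to Kummer theory on the compact side. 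You instead stay on the compact side throughout, computing $H^i(K_{n,w},\Z_p(1))$ at finite level (Tate module of a finite group, Kummer theory, the invariant map, local $\mathrm{cd}=2$) and passing to the norm limit. What this costs you is the extra bookkeeping of the corestriction-compatibility of the invariant maps and the stabilization of decomposition groups to identify $\varprojlim_n \Z_p[G_n/D_{n,v}]$ with $Z_v=\Z_p[\GG/\GG_v]$ (legitimate because $\GG_v$ is open for $v\nmid p$), which the duality route gets for free; what it buys you is independence from Proposition \ref{prop:40} in this step. Both arguments ultimately need the same facts about $\varprojlim_n (K_{n,w}^{\times})^{\wedge}_p$, namely that the $\mu_{p^\infty}$-part yields $J_w$ while the valuation part dies because the tower at $w\nmid p$ is eventually unramified of degree $p$; your treatment of that last point is in fact more explicit than the paper's.
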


\begin{proof} We feel that it should also be possible to assemble
a proof from suitable references to Nekov\'{a}\v{r}'s book \cite{Nek}, but we will 
write out a direct proof for the reader's convenience.

We have
\[
\T = \Z_p(1) \otimes_{\Z_p} \RR(\chi_{\GG}^{-1}) 
\simeq \varprojlim_{n} \left( \Z_p(1) 
   \otimes_{\Z_p} \Z_p[\Gal(K_n/k)](\chi_{\GG_n}^{-1}) \right),
\]
where $\chi_{\GG_n}: \Gal(k_S/k) \twoheadrightarrow \Gal(K_n/k) 
\hookrightarrow \Z_p[\Gal(K_n/k)]^{\times}$ is the tautological 
representation.  Then
\begin{align}
H^i(k_S/k, \T^{\dual}(1))
&\simeq \varinjlim_n H^i(k_S/k, (\Z_p(1) \otimes_{\Z_p} 
  \Z_p[\Gal(K_n/k)](\chi_{\GG_n}^{-1}))^{\dual}(1))\\
&\simeq \varinjlim_n H^i(k_S/k, (\Q_p/\Z_p) \otimes_{\Z_p} 
  \Z_p[\Gal(K_n/k)](\chi_{\GG_n}))\\
&\simeq \varinjlim_n H^i(k_S/K_n, \Q_p/\Z_p)\\
&\simeq H^i(k_S/K_{\infty}, \Q_p/\Z_p),
\end{align}
where the third isomorphism follows from Shapiro's lemma.
The weak Leopoldt conjecture, which says that  
$H^2(k_S/K_{\infty}, \Q_p/\Z_p)$
vanishes, is known to be true. This implies the first assertion
of Proposition \ref{prop:35}.

For the second assertion, we use Proposition \ref{prop:40} to see 
that $H^i(k_v, \T) \simeq  H^{2-i}(k_v, \T^{\dual}(1))^{\dual}$.
Take a place $w$ of $K_{\infty}$ above $v$.
A computation similar to the global case that we just have done 
shows ($G_{v,n}$ is an ad hoc abbreviation for $\Gal(K_{n, w}/k_v)$):
\begin{align}
H^i(k_v, \T^{\dual}(1))^{\dual}
&\simeq \left[ \varinjlim_n H^i(k_v, (\Z_p(1) \otimes_{\Z_p} 
  \Z_p[\Gal(K_n/k)](\chi_{\GG_n}^{-1}))^{\dual}(1)) \right]^{\dual}\\
&\simeq \left[ \varinjlim_n H^i(k_v, (\Q_p/\Z_p) \otimes_{\Z_p} 
  \Z_p[\Gal(K_n/k)](\chi_{\GG_n})) \right]^{\dual}\\
&\simeq \left[ \varinjlim_n H^i(k_v, (\Q_p/\Z_p) \otimes_{\Z_p} 
  \Z_p[\Gal(K_{n, w}/k_v)](\chi_{\GG_n})) \otimes_{\Z_p[G_{v,n}]} 
	\Z_p[\Gal(K_n/k)] \right]^{\dual}\\
&\simeq \left[ \varinjlim_n H^i(K_{n,w}, \Q_p/\Z_p) \right]^{\dual} 
   \otimes_{\RR_v} \RR\\
&\simeq H^i(K_{\infty, w}, \Q_p/\Z_p)^{\dual} \otimes_{\RR_v} \RR.
\end{align}
This implies the assertion for $i \neq 1$.
For $i = 1$, the above computation implies
\[
H^1(k_v, \T)
\simeq \left[ \varprojlim_n H^1(K_{n,w}, \Z_p(1)) \right] \otimes_{\RR_v} \RR.
\]
For each positive integer $m$, the exact sequence $0 \to \mu_{p^m} \to 
\overline{k_v}^{\times} \overset{(-)^{p^m}}{\to} \overline{k_v}^{\times} \to 0$ induces an isomorphism 
$K_{n, w}^{\times} / (K_{n, w}^{\times})^{p^m} \simeq H^1(K_{n, w}, \mu_{p^m})$.
By taking the inverse limit with respect to $m$ and $n$, we obtain 
$\varprojlim_n H^1(K_{n,w}, \Z_p(1)) \simeq J_w$.
This completes the proof.
\end{proof}

\begin{cor}\label{cor:25}
The complexes $\derR\Gamma(k_S/k, \T^{\dual}(1))^{\dual}$ and 
$\derR\Gamma(k_v, \T)$ for $v \nmid p$ are objects of $\DePT(\RR)$.
\end{cor}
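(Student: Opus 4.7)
The plan is to verify, for each of the two complexes, that it is perfect over $\RR$ and that its cohomology groups are $\RR$-torsion.

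For the local complex $\derR\Gamma(k_v,\T)$ with $v\nmid p$, perfectness is given by Proposition~\ref{prop:39}. By Proposition~\ref{prop:35} its cohomology is concentrated in degrees $1$ and $2$, namely $J_v$ and $Z_v$. Both are finitely generated over $\Z_p$ --- immediately for $Z_v=\Z_p[\GG/\GG_v]$ and via Remark~\ref{rem:37} for $J_v$. Since any finitely generated $\Z_p$-module is killed by some non-zero-divisor of $\RR$ (for example $\gamma-1$, where $\gamma$ is a topological generator of $\Gal(K_\infty/K)\cong\Z_p$, regarded as a subgroup of $\GG$), these cohomology groups are $\RR$-torsion.

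For the global complex $\derR\Gamma(k_S/k,\T^{\dual}(1))^{\dual}$, I would start from the distinguished triangle of Proposition~\ref{prop:41},
\[
\derR\Gamma(k_S/k,\T)\to\bigoplus_{v\in S}\derR\Gamma(k_v,\T)\to\derR\Gamma(k_S/k,\T^{\dual}(1))^{\dual}[-2]\to.
\]
Its first two terms are perfect by Proposition~\ref{prop:39}, so the mapping cone is perfect, and hence so is $\derR\Gamma(k_S/k,\T^{\dual}(1))^{\dual}$. By Proposition~\ref{prop:35}, its nonzero cohomology is $\Z_p$ in degree $0$ and $X_S$ in degree $1$; the former is $\RR$-torsion by the same argument as above.

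The main and only genuinely nontrivial step is the $\RR$-torsion property of $X_S$. By the exact sequence~\eqref{eq:22}, $X_S$ differs from $X_{S_p}$ by the $\Z_p$-finitely generated module $J_{S'}$, which reduces the question to $X_{S_p}$. Here the totally real hypothesis enters decisively: because $K/k$ is an abelian extension of totally real fields, $K$ (and hence every finite layer $K_n$) is totally real, so $K_\infty$ has no complex places; Iwasawa's classical formula for the $\Z_p[[\Gal(K_\infty/K)]]$-rank of $X_{S_p}$ then forces this rank to be zero, so $X_{S_p}$ is torsion over this Iwasawa subalgebra and therefore over $\RR$. This is the essential obstacle; everything else is a formal consequence of Propositions~\ref{prop:39}, \ref{prop:41}, and \ref{prop:35}.
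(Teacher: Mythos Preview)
Your proof is correct and follows essentially the same route as the paper: perfectness via Propositions~\ref{prop:39} and~\ref{prop:41}, torsion via the cohomology computation in Proposition~\ref{prop:35}. The paper's proof is two sentences and leaves the torsion verification implicit; you have spelled it out, including the one genuinely arithmetic input (that $X_{S_p}$ is $\Lambda$-torsion because $K$ is totally real, via Iwasawa's rank formula or equivalently the weak Leopoldt conjecture already invoked in the proof of Proposition~\ref{prop:35}).
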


\begin{proof}
Propositions \ref{prop:39} and \ref{prop:41} imply that these complexes 
are objects of $\DeP(\RR)$.
By Proposition \ref{prop:35}, the cohomology groups are torsion.
\end{proof}

     \subsection{The algebraic part of the proof}

We define a complex $C_S = C_S(K_{\infty}/k)$ 
as a mapping cone of 
$\bigoplus_{v \in S'} \derR\Gamma(k_v, \T) \to 
   \derR\Gamma(k_S/k, \T^{\dual}(1))^{\dual}[-2]$, namely 
define it such that
it fits into a distinguished triangle
\begin{equation}\label{eq:21}
\bigoplus_{v \in S'} \derR\Gamma(k_v, \T) \to 
   \derR\Gamma(k_S/k, \T^{\dual}(1))^{\dual}[-2] \to C_S \to,
\end{equation}
where the first morphism is induced by the restriction, using Proposition \ref{prop:40}.
By Corollary \ref{cor:25}, $C_S$ is actually an object of $\DePT(\RR)$.

\begin{prop}\label{prop:26}
We have $H^i(C_S) = 0$ unless $i = 1$, and an exact sequence
\begin{equation}\label{eq:20}
0 \to X_{S_p} \to H^1(C_S) \to Z_{S'}^0 \to 0
\end{equation}
of $\RR$-modules.
\end{prop}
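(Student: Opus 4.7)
The plan is to apply the long exact sequence of cohomology to the defining distinguished triangle \eqref{eq:21} and then identify the resulting boundary maps with maps that are known to us from the earlier parts of the paper.

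By Proposition \ref{prop:35}, $\bigoplus_{v \in S'} \derR\Gamma(k_v, \T)$ has cohomology concentrated in degrees $1$ and $2$, equal respectively to $J_{S'}$ and $Z_{S'}$. Similarly, $\derR\Gamma(k_S/k, \T^{\dual}(1))^{\dual}[-2]$ has cohomology concentrated in degrees $1$ and $2$: applying $H^i(C^\dual) \cong H^{-i}(C)^\dual$ to $C = \derR\Gamma(k_S/k, \T^\dual(1))$ and then shifting by $[-2]$, one places $H^1(k_S/k, \T^{\dual}(1))^{\dual} = X_S$ in degree $1$ and $H^0(k_S/k, \T^{\dual}(1))^{\dual} = \Z_p$ in degree $2$. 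The long exact sequence from \eqref{eq:21} therefore immediately gives $H^i(C_S) = 0$ for $i \notin \{0,1,2\}$ and produces the six-term sequence
\[
0 \to H^0(C_S) \to J_{S'} \xrightarrow{\alpha} X_S \to H^1(C_S) \to Z_{S'} \xrightarrow{\beta} \Z_p \to H^2(C_S) \to 0.
\]

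What remains is to identify $\alpha$ with the canonical inclusion of \eqref{eq:22} (whose cokernel is $X_{S_p}$) and $\beta$ with the sum of augmentation maps, whose kernel is $Z_{S'}^0$ by the definition \eqref{eq:23}. Granting these identifications, $\alpha$ is injective and, because $S' \neq \emptyset$, $\beta$ is surjective, so $H^0(C_S) = H^2(C_S) = 0$, and the four middle terms give exactly the desired short exact sequence \eqref{eq:20}.

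The identification of $\beta$ is the easier step: it can be extracted from the Shapiro-type computation used in the proof of Proposition \ref{prop:35}. Namely, the Pontryagin dual of $\beta$ is the localization $\Q_p/\Z_p = H^0(k_S/K_\infty, \Q_p/\Z_p) \to H^0(K_{\infty, w}, \Q_p/\Z_p) = \Q_p/\Z_p$ on each summand, sending $1$ to the constant function on $\GG/\GG_v$; dualizing back yields the augmentation $Z_v = \Z_p[\GG/\GG_v] \to \Z_p$. The identification of $\alpha$ is the expected main obstacle, since it amounts to matching the boundary map produced by the Poitou--Tate machinery of Propositions \ref{prop:40} and \ref{prop:41} with the class-field-theoretic embedding of inertia used to set up \eqref{eq:22}. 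This is the classical compatibility between local Tate duality and local class field theory, transported to the Iwasawa-theoretic level; once it is in hand, $\alpha$ becomes exactly the injection of \eqref{eq:22} and the proof concludes.
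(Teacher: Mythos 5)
Your proposal is correct and follows essentially the same route as the paper: take the long exact sequence of the defining triangle \eqref{eq:21}, identify the terms via Proposition \ref{prop:35}, and conclude using the exact sequences \eqref{eq:22} and \eqref{eq:23}. The only difference is that you explicitly flag and sketch the identification of the two connecting maps with the class-field-theoretic inclusion and the augmentation map, a compatibility the paper's proof passes over in silence.
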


\begin{proof}
Taking the long exact sequence associated to \eqref{eq:21} and using Proposition \ref{prop:35}, 
we obtain an exact sequence
\begin{align}
0 \to H^0(C_S) &\to J_{S'} \to X_S \to H^1(C_S)\\
& \to Z_{S'} \to \Z_p \to H^2(C_S) \to 0.
\end{align}
Then the assertion follows from the exact sequences \eqref{eq:23} 
and \eqref{eq:22}.
\end{proof}

\begin{cor}\label{cor:42}
The projective dimension of $H^1(C_S)$ is at most one, and we have
\begin{equation}\label{eq:27}
\Fitt_{\RR}(X_{S_p}) = \Fitt_{\RR}(H^1(C_S)) \Fitt_{\RR}^{[1]}(Z_{S'}^0).
\end{equation}
\end{cor}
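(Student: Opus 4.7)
The plan is to prove the corollary in two parts. The main content is the projective dimension bound $\pd_{\RR}(H^1(C_S)) \leq 1$; the Fitting ideal identity then follows formally from the defining formula of the shifted Fitting ideal applied to the short exact sequence of Proposition \ref{prop:26}.

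For the projective dimension bound, note that $C_S \in \DePT(\RR)$ by Corollary \ref{cor:25}, and by Proposition \ref{prop:26} its cohomology is concentrated in degree $1$. Thus $C_S$ is quasi-isomorphic to $H^1(C_S)[-1]$ in the derived category, and it suffices to exhibit a quasi-isomorphic complex of finitely generated projective $\RR$-modules concentrated in degrees $0$ and $1$. My plan is to choose perfect representatives of the two outer terms of the defining triangle \eqref{eq:21}, each concentrated in degrees $[1,2]$; the mapping cone $C_S$ is then represented by a $3$-term perfect complex $P^0 \to P^1 \to P^2$ of finitely generated projectives in degrees $[0,2]$. The vanishing $H^2(C_S) = 0$ from Proposition \ref{prop:26} forces $d^1 \colon P^1 \to P^2$ to be surjective; since $P^2$ is projective, the short exact sequence $0 \to \ker(d^1) \to P^1 \to P^2 \to 0$ splits and $\ker(d^1)$ is a projective direct summand of $P^1$. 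Replacing $C_S$ by the quasi-isomorphic $2$-term complex $[P^0 \to \ker(d^1)]$ in degrees $[0,1]$ then yields $\pd_{\RR}(H^1(C_S)) \leq 1$.

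For the Fitting identity, granted the bound, the short exact sequence
\[
0 \to X_{S_p} \to H^1(C_S) \to Z^0_{S'} \to 0
\]
of Proposition \ref{prop:26} is a $1$-step resolution of $Z^0_{S'}$ of the form required by the defining theorem of $\Fitt^{[1]}_{\RR}$ in Subsection 2.3. Taking $n=1$ in that formula yields
\[
\Fitt^{[1]}_{\RR}(Z^0_{S'}) = \Fitt_{\RR}(H^1(C_S))^{-1} \cdot \Fitt_{\RR}(X_{S_p}),
\]
and rearranging gives \eqref{eq:27}.

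The main obstacle is the choice of $2$-term perfect representatives in degrees $[1,2]$ of the two outer terms of \eqref{eq:21}, which is not a purely formal consequence of perfectness and hinges on projective-dimension properties of the modules $J_v$, $Z_v$ locally and the analogous global quantities. A more robust alternative route is to invoke the octahedral axiom together with the Poitou--Tate triangle (Proposition \ref{prop:41}) to identify $C_S$ with (a shift of) the Selmer-type cone $\Cone(\derR\Gamma(k_S/k, \T) \to \bigoplus_{v \in S_p} \derR\Gamma(k_v, \T))$, and then appeal to standard Iwasawa-theoretic projective-dimension results for such Selmer complexes.
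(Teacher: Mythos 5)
Your second step (deducing \eqref{eq:27} from the short exact sequence of Proposition \ref{prop:26} once $\pd_{\RR}(H^1(C_S))\le 1$ is known) is exactly the paper's argument and is fine. The problem is the first step, and you have in effect flagged it yourself: the existence of representatives of the two outer terms of \eqref{eq:21} by finitely generated projectives concentrated in degrees $[1,2]$ is not a consequence of perfectness together with the computed cohomology. Perfectness controls the \emph{upper} end of the amplitude, but pushing the lower end up to degree $1$ amounts to a Tor-amplitude statement that is essentially equivalent to what you are trying to prove; for the global term this is genuinely delicate (the known statement, quoted in the proof of Theorem \ref{thm:33}, represents it by a two-term complex of modules of projective dimension $\le 1$, i.e.\ a priori a \emph{three}-term projective complex). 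Your fallback --- octahedral axiom plus ``standard Iwasawa-theoretic projective-dimension results for Selmer complexes'' --- does not close the gap either, since those standard results give perfectness, and the point at issue is precisely the refinement from perfect to amplitude $[0,1]$.

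The paper closes this gap by a different and much softer mechanism. Since $C_S$ is perfect and, by Proposition \ref{prop:26}, has cohomology concentrated in degree $1$, the module $H^1(C_S)$ has \emph{finite} projective dimension over $\RR$. One then invokes the standard criterion over $\RR=\Z_p[[\GG]]$: a finitely generated torsion module of finite projective dimension has projective dimension $\le 1$ if and only if it contains no nonzero finite submodule. The latter property for $H^1(C_S)$ is read off from the exact sequence \eqref{eq:20}: $Z^0_{S'}$ is $\Z_p$-free, so any finite submodule of $H^1(C_S)$ would land in $X_{S_p}$, which has no nonzero finite submodule. If you want to keep your derived-category framing, you should replace the construction of $[1,2]$-representatives by this finite-submodule argument (or supply an actual proof of the Tor-amplitude claim, which you have not done).
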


\begin{proof}
Since $C_S$ is perfect, the first statement
of Proposition \ref{prop:26} tells us that 
$\pd_{\RR}(H^1(C_S)) < \infty$.
By the exact sequence \eqref{eq:20}, $H^1(C_S)$ does not contain 
any non-trivial finite submodule.
Hence we have $\pd_{\RR}(H^1(C_S)) \leq 1$.
The formula \eqref{eq:27} is therefore a consequence of \eqref{eq:20} 
and the definition of $\Fitt_{\RR}^{[1]}$.
\end{proof}

\subsection{Principality of $\Fitt_{\RR}(X_{S_p})$}\label{subsec:61} 

At the end of this section we put the preceding result
into perspective by discussing the exact conditions
under which the ideal
$\Fitt_{\RR}(X_{S_p})$ is principal. 
Keep the setup of preceding sections.

\begin{lem}\label{lem:47} 
Suppose that there is a place $v^* \in S'$ such that 
$\GG_{v*} \supset \GG_v$ for any $v \in S'$.
Then we have an isomorphism
\[
Z_{S'}^0 \simeq Z_{v^*}^0 \oplus \bigoplus_{v \in S', v \neq v^*}Z_{v}.
\]
\end{lem}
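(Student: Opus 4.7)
The plan is to exhibit an explicit $\RR$-module isomorphism of $Z_{S'}$ that carries the total augmentation map to a projection onto the single summand $Z_{v^*}$. Then the kernel (which is $Z_{S'}^0$) will split as claimed.

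First I would observe the key consequence of the hypothesis $\GG_{v^*}\supset\GG_v$: it yields, for each $v\in S'$, a canonical $\RR$-linear surjection $\pi_v\colon Z_v=\Z_p[\GG/\GG_v]\twoheadrightarrow Z_{v^*}=\Z_p[\GG/\GG_{v^*}]$, and by construction these $\pi_v$ are compatible with the augmentation maps to $\Z_p$, i.e. $\epsilon_{v^*}\circ\pi_v=\epsilon_v$, where $\epsilon_v\colon Z_v\to\Z_p$ denotes the augmentation. (For $v=v^*$, $\pi_{v^*}$ is the identity.)

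Next I would construct a "triangular" $\RR$-automorphism
\[
\Phi\colon Z_{v^*}\oplus\bigoplus_{v\in S',\,v\neq v^*}Z_v\;\xrightarrow{\sim}\;Z_{S'}
\]
by sending $y\in Z_{v^*}$ to the element with $y$ in the $v^*$-summand and $0$ elsewhere, and sending $x\in Z_v$ (for $v\neq v^*$) to the element with $x$ in the $v$-summand and $-\pi_v(x)$ in the $v^*$-summand. This map is an isomorphism because its matrix with respect to the summand decomposition is unitriangular; the inverse sends $\bigl(y_{v^*},(x_v)_{v\neq v^*}\bigr)$ to the tuple whose $v^*$-component is $y_{v^*}+\sum_{v\neq v^*}\pi_v(x_v)$ and whose $v$-component ($v\neq v^*$) is $x_v$.

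Finally I would check that under $\Phi$ the augmentation map $Z_{S'}\to\Z_p$ pulls back to $\epsilon_{v^*}$ on the $Z_{v^*}$ summand and to zero on each $Z_v$ for $v\neq v^*$: indeed, on $Z_v$ one computes $\epsilon_v(x)+\epsilon_{v^*}(-\pi_v(x))=\epsilon_v(x)-\epsilon_v(x)=0$ by the compatibility above. Taking kernels and setting $Z_{v^*}^0=\ker(\epsilon_{v^*})$ then yields the desired splitting. There is no serious obstacle here; the only point requiring attention is verifying that $\Phi$ is $\RR$-linear (immediate, since each $\pi_v$ is) and that the hypothesis is really used only through the existence of $\pi_v$.
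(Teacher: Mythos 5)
Your proof is correct and rests on exactly the same idea as the paper's: the surjections $\pi_v\colon Z_v\to Z_{v^*}$ coming from $\GG_{v^*}\supset\GG_v$ and the "subtract $\pi_v(x_v)$ in the $v^*$-coordinate" map, which is precisely the section $s$ the paper uses to split the sequence $0\to Z_{v^*}^0\to Z_{S'}^0\to\bigoplus_{v\neq v^*}Z_v\to 0$. Packaging it as a unitriangular automorphism $\Phi$ of $Z_{S'}$ rather than as a section is only a cosmetic difference.
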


\begin{proof}
Put $Z_{S' \setminus \{v^*\}} = \bigoplus_{v \in S', v \neq v^*} Z_v$.
Consider the commutative diagram with exact rows and columns
\[
\xymatrix{
0 \ar[r] &
Z_{v^*}^0 \ar[r] \ar@{^{(}->}[d] &
Z_{v^*} \ar[r] \ar@{^{(}->}[d] &
\Z_p \ar[r] \ar@{=}[d] &
0 \\
0 \ar[r] &
Z_{S'}^0 \ar[r] \ar@{->>}[d] &
Z_{S'} \ar[r] \ar@{->>}[d] &
\Z_p \ar[r] &
0 \\
& Z_{S' \setminus \{v^*\}} \ar@{=}[r] &
Z_{S' \setminus \{v^*\}} &&
}
\]
We shall show that the left vertical sequence splits.
Pick any $v \in S'$ with $v \neq v^*$. Then since $\GG_{v^*} \supset \GG_v$, 
we have a natural surjective homomorphism 
\[
\pi_v: Z_v = \Z_p[\GG/\GG_v] \to \Z_p[\GG/\GG_{v^*}] = Z_{v^*}.
\]
Using these homomorphisms, define a homomorphism 
$s: Z_{S' \setminus \{v^*\}} \to Z_{S'}$ as follows.
For $x = (x_v)_{v \in S', v \neq v^*} \in Z_{S' \setminus \{v^*\}}$, put $s(x)_v = x_v$ if $v \neq v^*$ and put
\[
 s(x)_{v^*} = - \sum_{v \in S, v \neq v^*} \pi_v(x_v).
\]
Then define $s(x) = (s(x)_v)_{v \in S'} \in Z_{S'}$.
By construction, $s$ is a section of the natural projection 
$Z_{S'} \to Z_{S' \setminus \{v^*\}}$, and moreover the image 
of $s$ is contained in $Z_{S'}^0$.
Therefore $s$ gives a splitting of the left vertical sequence, 
which completes the proof.
\end{proof}

\begin{prop} \label{prop:63}
Suppose $K/k$ is a $p$-extension.
Put $S = S_p \cup S_{\ram}(K/k)$ and suppose that $S' = S \setminus S_p \neq \emptyset$ 
(note that this implies $K_{\infty} \neq k_{\infty}$).
Then the following are equivalent.
\begin{itemize}
\item[(i)] $\Fitt_{\RR}(X_{S_p})$ is a principal ideal.
\item[(ii)] $\pd_{\RR}(X_{S_p}) \leq 1$.
\item[(iii)] $\pd_{\RR}(Z^0_{S'}) \leq 1$.
\item[(iv)] $Z^0_{S'} = 0$.
\item[(v)] $S'$ consists of only one place $v^*$, and this
place satisfies $\GG_{v^*} =\GG$. In other words, $v^*$ must
be totally inert in $k_\infty/k$ and totally ramified in $K_\infty/k_\infty$. 
\end{itemize}
\end{prop}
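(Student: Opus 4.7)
Since $K/k$ is a $p$-extension, $\GG$ is a pro-$p$ group and $\RR = \Z_p[[\GG]]$ is a commutative Noetherian local ring with residue field $\F_p$. My plan is to establish the equivalences by closing the chain
\[
\text{(iv)} \Leftrightarrow \text{(v)}, \quad \text{(iv)} \Rightarrow \text{(iii)} \Rightarrow \text{(ii)} \Rightarrow \text{(i)} \Rightarrow \text{(iv)}.
\]

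First, (iv) $\Leftrightarrow$ (v) is a pure $\Z_p$-rank computation: since $Z_v = \Z_p[\GG/\GG_v]$ is $\Z_p$-free of rank $[\GG:\GG_v]$, the sequence \eqref{eq:23} gives $\rank_{\Z_p}(Z^0_{S'}) = \sum_{v \in S'}[\GG:\GG_v] - 1$, which vanishes exactly when $|S'|=1$ and $\GG_{v^*}=\GG$. The implication (iv) $\Rightarrow$ (iii) is immediate. For (iii) $\Rightarrow$ (ii) I feed the exact sequence
\[
0 \to X_{S_p} \to H^1(C_S) \to Z^0_{S'} \to 0
\]
from Proposition \ref{prop:26} into the long exact sequence of $\operatorname{Ext}$: combined with $\pd_\RR(H^1(C_S)) \leq 1$ from Corollary \ref{cor:42}, the standard bound $\pd_\RR(X_{S_p}) \leq \max(\pd_\RR(H^1(C_S)),\, \pd_\RR(Z^0_{S'})-1)$ forces $\pd_\RR(X_{S_p}) \leq 1$. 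For (ii) $\Rightarrow$ (i) I invoke the standard fact that over a commutative local Noetherian ring, a finitely generated torsion module $M$ with $\pd_\RR(M)\leq 1$ admits a square minimal free presentation $0\to\RR^n\to\RR^n\to M\to 0$, so $\Fitt_\RR(M)$ is principal, generated by the determinant of the presentation matrix.

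The real content lies in the closing implication (i) $\Rightarrow$ (iv). Corollary \ref{cor:42} yields
\[
\Fitt_\RR(X_{S_p}) = \Fitt_\RR(H^1(C_S)) \cdot \Fitt_\RR^{[1]}(Z^0_{S'}),
\]
and $\Fitt_\RR(H^1(C_S))$ is already principal by the local-ring argument above. Hence (i) forces $\Fitt_\RR^{[1]}(Z^0_{S'})$ to be a principal fractional ideal of $\RR$. My plan is then to show that this principality forces $Z^0_{S'} = 0$. I would analyze $\Fitt_\RR^{[1]}(Z^0_{S'})$ via the sequence \eqref{eq:23} together with the explicit formula $\Fitt_\RR^{[1]}(Z_v) = (1,\nu_v/(\sigma_v-1))$ of Proposition \ref{prop:85}. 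When some $v^* \in S'$ has $\GG_{v^*}\supseteq \GG_v$ for all $v\in S'$, Lemma \ref{lem:47} splits $Z^0_{S'}$ as a direct sum, and the shifted Fitting invariant factorizes accordingly; each local factor $(1,\nu_v/(\sigma_v-1))$ is generically non-principal because $\sigma_v-1$ lies in the maximal ideal of $\RR$, and the rigidity needed for the product to become principal pins down exactly the configuration of (v).

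The hard part will be this last bookkeeping step, especially in the case without a maximal decomposition group: computing $\Fitt_\RR^{[1]}(Z^0_{S'})$ explicitly from the local contributions and extracting $Z^0_{S'}=0$ from principality. This requires careful control over the behavior of the shifted Fitting invariant under the non-split sequence \eqref{eq:23} and over the resulting cancellation conditions between the local fractional ideals $(1,\nu_v/(\sigma_v-1))$.
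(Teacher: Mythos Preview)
Your forward chain (iv) $\Rightarrow$ (iii) $\Rightarrow$ (ii) $\Rightarrow$ (i) and the equivalence (iv) $\Leftrightarrow$ (v) are fine. The gap is precisely where you flag it: deducing $Z^0_{S'}=0$ from principality of $\Fitt^{[1]}_\RR(Z^0_{S'})$ is not a bookkeeping matter. The factorization via Proposition~\ref{prop:85} and Lemma~\ref{lem:47} only applies once a maximal $\GG_{v^*}$ is already known to exist, and even then the ``rigidity'' you invoke for the local factors $(1,\nu_v/(\sigma_v-1))$ is not something you can simply read off. In the absence of a maximal decomposition group there is no product formula for $\Fitt^{[1]}_\RR(Z^0_{S'})$ at all (this is exactly the difficulty addressed in Section~\ref{sec:48}), so the plan does not close.

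The paper takes a different route: it proves (i) $\Leftrightarrow$ (ii) in both directions (citing \cite[Proposition~4]{CiaGr98}; the direction (i) $\Rightarrow$ (ii), which you omit, uses that $X_{S_p}$ has no nonzero finite submodule), then (ii) $\Leftrightarrow$ (iii) from the sequence of Proposition~\ref{prop:26}, and then attacks (iii) $\Rightarrow$ (iv) directly. The device you are missing is this: set $H=\Gal(K_\infty/k_\infty)$, a nontrivial finite $p$-group; for an $\RR$-module $M$ that is free of finite rank over $\Z_p$, one has $\pd_\RR(M)\le 1$ if and only if $M$ is free over $\Z_p[H]$. If every $\GG_v$ ($v\in S'$) were proper in $\GG$, each $[\GG:\GG_v]$ would be a $p$-power $>1$, so $\rank_{\Z_p}(Z^0_{S'})\equiv -1\pmod p$ and $Z^0_{S'}$ could not be $\Z_p[H]$-free. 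Hence some $v^*$ has $\GG_{v^*}=\GG$; Lemma~\ref{lem:47} then gives $Z^0_{S'}\simeq\bigoplus_{v\neq v^*}Z_v$, and $Z_v=\Z_p[\GG/\GG_v]$ is $\Z_p[H]$-free iff $\GG_v\cap H=1$, which fails because every $v\in S'=S_{\ram}(K/k)\setminus S_p$ ramifies in $K_\infty/k_\infty$. Thus $S'=\{v^*\}$. This mod-$p$ rank argument replaces your Fitting-ideal analysis entirely.
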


\begin{proof}
The equivalence (i) $\Leftrightarrow$ (ii) follows from the argument of \cite[Proposition 4]{CiaGr98}.
The equivalence (ii) $\Leftrightarrow$ (iii) follows from the short exact sequence
(\ref{eq:20}) and the first line of Corollary \ref{cor:42}.
The equivalence (iv) $\Leftrightarrow$ (v) is clear, and the implication (iv) $\Rightarrow$ (iii) is trivial.

Now we show the implication (iii) $\Rightarrow$ (iv).
Put $H=\Gal(K_\infty/k_\infty)$, which is a non-trivial $p$-group by assumption.
We note that, for any $\RR$-module $M$ which is free of finite rank over $\Z_p$, we have $\pd_{\RR}(M) \leq 1$ if and only if $M$ is a free $\Z_p[H]$-module.

First suppose that all quotients $\GG/\GG_v$ with $v\in S'$ are non-trivial.
Then the $\Z_p$-rank of every $Z_v = \Z_p[\GG/\GG_v]$ is a $p$-power
$>1$, so that we have $\rank_{\Z_p}(Z^0_{S'}) \equiv -1 (\bmod p)$.
Hence $Z^0_{S'}$ cannot be free over $\Z_p[H]$.

Consequently, if $\pd_{\RR}(Z^0_{S'}) \leq 1$, then we have at least one $v^* \in S'$ such that $\GG/\GG_{v^*}$ is trivial.
Then by Lemma \ref{lem:47}, we obtain
$$ Z^0_{S'} \simeq \bigoplus_{v\in S', v \neq v^*} Z_v. $$
It is easy to check that, for each $v \in S'$ with $v \neq v^*$, we have $Z_v = \Z_p[\GG/\GG_v]$ is free over $\Z_p[H]$ if and
only if $\GG_v \cap H = 1$. But $\GG_v \cap H$ is the decomposition group
of (a prime above) $v$ in $K_\infty/k_\infty$, and by the assumption $S' = S_{\ram}(K/k) \setminus S_p$,
the prime $v$ must
ramify in $K_\infty/k_\infty$. Hence we must have $S' = \{v^*\}$.
\end{proof}

For completeness, we note the following.
\begin{lem}
Suppose that $K/k$ is a $p$-extension and that $S_{\ram}(K/k) \subset S_p$.
Then $\Fitt_{\RR}(X_{S_p})$ is a principal ideal if and only if $K_{\infty} = k_{\infty}$.
\end{lem}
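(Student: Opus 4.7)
The plan is to mirror the strategy of Proposition \ref{prop:63}, adapted to the degenerate case $S' = \emptyset$ that the lemma addresses. The equivalence ``$\Fitt_{\RR}(X_{S_p})$ principal $\Leftrightarrow$ $\pd_{\RR}(X_{S_p}) \leq 1$'' that was used there (via \cite[Proposition 4]{CiaGr98}) still applies, so the problem reduces to deciding precisely when $\pd_{\RR}(X_{S_p}) \leq 1$ holds.

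In Proposition \ref{prop:63} the module $Z^0_{S'}$ controlled that question; when $S' = \emptyset$ its role is taken over by the trivial $\RR$-module $\Z_p$. Concretely, I would consider the complex $D := \derR\Gamma(k_S/k, \T^{\dual}(1))^{\dual}$ with $S = S_p$, which lies in $\DeP(\RR)$ by Propositions \ref{prop:39} and \ref{prop:41}. Since $S' = \emptyset$ forces $X_S = X_{S_p}$, Proposition \ref{prop:35} gives $H^0(D) \cong \Z_p$ and $H^1(D) \cong X_{S_p}$, with no other cohomology. The canonical truncation then yields a distinguished triangle
\[
\Z_p[0] \longrightarrow D \longrightarrow X_{S_p}[-1] \longrightarrow \Z_p[1]
\]
in $\DeB(\RR)$, and since $D$ is perfect, the two-out-of-three property of $\DeP(\RR)$ gives the clean equivalence
\[
\pd_{\RR}(X_{S_p}) < \infty \iff \pd_{\RR}(\Z_p) < \infty.
\]

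The remaining step is to compute $\pd_{\RR}(\Z_p)$. Because $\Gamma := \Gal(k_{\infty}/k) \cong \Z_p$ is $\Z_p$-projective, the short exact sequence $1 \to H \to \GG \to \Gamma \to 1$ (with $H := \Gal(K_{\infty}/k_{\infty})$) splits, identifying $\RR$ with $\Z_p[H][[T]]$. If $K_{\infty} \neq k_{\infty}$, then $H$ is a non-trivial finite $p$-group; its group cohomology $H^{\bullet}(H, \Z_p)$ is non-zero in infinitely many degrees, so $\pd_{\Z_p[H]}(\Z_p) = \infty$, and the standard change-of-rings formula $\pd_{\RR}(M) = \pd_{\Z_p[H]}(M) + 1$ for $M$ annihilated by the regular element $T$ gives $\pd_{\RR}(\Z_p) = \infty$; the displayed equivalence then forces $\pd_{\RR}(X_{S_p}) = \infty$, so $\Fitt_{\RR}(X_{S_p})$ cannot be principal. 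If instead $K_{\infty} = k_{\infty}$, then $\RR = \Z_p[[T]]$ is a two-dimensional regular local ring and $\Z_p = \RR/(T)$ has projective dimension $1$; the equivalence then gives $\pd_{\RR}(X_{S_p}) < \infty$. To upgrade this to $\pd_{\RR}(X_{S_p}) \leq 1$ I would invoke the classical Iwasawa-theoretic fact that, for the cyclotomic $\Z_p$-extension of a totally real field, $X_{S_p}(k_{\infty})$ has no non-trivial finite $\RR$-submodule; in the two-dimensional regular local ring $\RR$, the Auslander--Buchsbaum formula turns this into $\pd_{\RR}(X_{S_p}) \leq 1$ and hence principality.

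The main obstacle is precisely this classical no-finite-submodule input in the direction $K_{\infty} = k_{\infty} \Rightarrow$ principal (an input that is tacitly in play throughout the paper, compare the proof of Corollary \ref{cor:42}). The rest of the argument is a short manipulation of a distinguished triangle in the derived category together with the routine homological calculation that $\pd_{\RR}(\Z_p) \in \{1, \infty\}$ according to whether $H$ is trivial.
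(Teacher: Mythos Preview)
Your proposal is correct and follows essentially the same route as the paper: both reduce principality to $\pd_{\RR}(X_{S_p}) \leq 1$ via \cite[Proposition~4]{CiaGr98}, then use the perfectness of $\derR\Gamma(k_S/k, \T^{\dual}(1))^{\dual}$ (Propositions \ref{prop:39}, \ref{prop:35}) to trade this for a condition on $\pd_{\RR}(\Z_p)$, which is then computed directly. The paper compresses all of this into one sentence, whereas you spell out the derived-category triangle and explicitly flag the no-finite-submodule input needed to pass from $\pd < \infty$ to $\pd \leq 1$; apart from an inconsequential shift in the cohomological indexing of $D$, there is no substantive difference.
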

\begin{proof}
As already used in the proof of Proposition \ref{prop:63}, the ideal $\Fitt_{\RR}(X_{S_p})$ is principal if and only if $\pd_{\RR}(X_{S_p}) \leq 1$.
By Propositions \ref{prop:39} and \ref{prop:35}, we see that $\pd_{\RR}(X_{S_p}) \leq 1$ is equivalent to $\pd_{\RR}(\Z_p) \leq 1$, which is true exactly when $K_{\infty} = k_{\infty}$.
\end{proof}


\section{Proof of main result (II)}\label{sec:76}

In this section, we complete the proof of Theorem \ref{thm:28} by 
determining $\Fitt_{\RR}(H^1(C_S))$. This is in a certain way the
arithmetic part of the proof. We need a few preliminaries concerning
determinants and Fitting ideals.

     \subsection{The determinant homomorphism}

This subsection is devoted to the homological 
algebra related to the determinant functor.
Let $\Inv(\RR)$ be the commutative group of invertible fractional 
ideals of $\RR$. We shall introduce a group homomorphism, called the 
determinant,
\[
\Det_{\RR}: K_0(\DePT(\RR)) \to \Inv(\RR).
\]
Here $K_0$ denotes the Grothendieck group of a triangulated category.
We refer to Knudsen-Mumford \cite{KM76} for more on the theory of 
determinants.

Let $\ChP(\RR)$ be the abelian category of perfect complexes 
of $\RR$-modules. More precisely, $\ChP(\RR)$ consists of bounded 
complexes $F$ of $\RR$-modules such that $F^i$ is finitely generated 
and projective for all $i$. Let $\ChPT(\RR)$ be the subcategory of 
complexes with torsion cohomology groups.

\begin{defn}
A graded invertible $\RR$-module is a pair $(L, \alpha)$ 
where $L$ is an invertible $\RR$-module and 
$\alpha:  \Spec(\RR) \to \Z$ is a locally constant map.
Two graded invertible $\RR$-modules $(L, \alpha)$ and $(L', \alpha')$ are said to be isomorphic if $\alpha = \alpha'$ and $L$ and $L'$ are isomorphic as $\RR$-modules.
For two graded invertible $\RR$-modules $(L, \alpha), (L', \alpha')$, 
we define 
\[
(L, \alpha) \otimes (L', \alpha') = (L \otimes_{\RR} L', \alpha + \alpha').
\]
Then $(\Hom_{\RR}(L, \RR), -\alpha)$ is the inverse of $(L, \alpha)$.
\end{defn}

\begin{defn}
For a finitely generated projective $\RR$-module $F$, let $\rank(F)$ 
denote the (locally constant) rank of $F$, and define the determinant 
of $F$ by
\[
\Det_{\RR}(F) = \left( \bigwedge_{\RR}^{\rank(F)} F, \rank(F) \right),
\]
which is a graded invertible $\RR$-module.
Let $\Det_{\RR}^{-1}(F)$ be the inverse of $\Det_{\RR}(F)$.
\end{defn}

\begin{lem}\label{lem:28}
The following statements hold true.

(1) Let $0 \to F' \to F \to F'' \to 0$ be an exact sequence of 
finitely generated projective $\RR$-modules.
Then we have a canonical isomorphism 
$\Det_{\RR}(F) \simeq \Det_{\RR}(F') \otimes \Det_{\RR}(F'')$.

(2) Let $F$ and $F'$ be finitely generated projective $\RR$-modules.
Then we have a canonical isomorphism
\[
\Det_{\RR}(F) \otimes \Det_{\RR}(F') \simeq \Det_{\RR}(F') 
   \otimes \Det_{\RR}(F),
\]
which is locally given by 
\[
a_1 \wedge \dots \wedge a_r \otimes b_1 \wedge \dots \wedge b_{r'} 
\mapsto (-1)^{r r'}b_1 \wedge \dots \wedge b_{r'} 
  \otimes a_1 \wedge \dots \wedge a_r.
\]
Here $r$ and $r'$ denote the local rank of $F$ and $F'$, respectively.
\end{lem}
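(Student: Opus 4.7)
The plan is to reduce both parts to explicit computations on finitely generated free modules by localizing on $\Spec(\RR)$; since a finitely generated projective module over a local ring is free, everything comes down to checking that the basis-dependent formulas one writes down are actually basis-independent and thus glue globally.

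For part (1), I would first localize so that $F'$, $F$, $F''$ become free of ranks $r'$, $r$, $r''$ with $r = r' + r''$. Choose a basis $e'_1,\dots,e'_{r'}$ of $F'$ and lift a basis of $F''$ to elements $e_1,\dots,e_{r''}\in F$; together these give a basis of $F$. The required map is then given on the generator by
\[
e'_1\wedge\cdots\wedge e'_{r'} \otimes \bar e_1\wedge\cdots\wedge \bar e_{r''} \longmapsto e'_1\wedge\cdots\wedge e'_{r'}\wedge e_1\wedge\cdots\wedge e_{r''}.
\]
A routine verification shows this is independent of the chosen lift (a different lift differs by an element of the image of $F'$, which is killed by the wedge with all the $e'_j$) and transforms by a matching determinant under change of basis on $F'$ and $F''$, so it defines a canonical isomorphism. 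The grading matches because rank is additive in short exact sequences of finitely generated projective modules.

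For part (2), I would apply part (1) to the two split short exact sequences
\[
0 \to F \to F \oplus F' \to F' \to 0, \qquad 0 \to F' \to F \oplus F' \to F \to 0,
\]
obtaining canonical isomorphisms
\[
\Det_{\RR}(F) \otimes \Det_{\RR}(F') \simeq \Det_{\RR}(F\oplus F') \simeq \Det_{\RR}(F') \otimes \Det_{\RR}(F).
\]
Composing them defines the desired canonical map, and a local computation with bases $a_1,\dots,a_r$ of $F$ and $b_1,\dots,b_{r'}$ of $F'$ produces the sign $(-1)^{rr'}$ via the usual Koszul reordering rule for wedge products.

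The main obstacle is not really a conceptual one but a careful bookkeeping exercise: one has to verify that the locally defined morphisms glue to a canonical (i.e.\ choice-free) isomorphism of graded invertible $\RR$-modules on all of $\Spec(\RR)$, and that all grading shifts in the construction remain compatible. Since this is precisely what the general determinant functor of Knudsen--Mumford \cite{KM76} is designed to handle, the cleanest presentation is to refer to loc.\ cit.\ for the foundational part and write out by hand only the explicit local formulas that are actually used later in the paper.
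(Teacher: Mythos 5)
Your argument is correct. The paper itself gives no proof of this lemma at all --- it simply records the statements and defers to Knudsen--Mumford \cite{KM76} for the theory of determinants --- so your write-up supplies exactly the standard argument that the authors take for granted: since $\RR$ is a finite product of local rings, finitely generated projectives are componentwise free, the splitting map in (1) is well defined independently of the lift for the reason you give, and deriving (2) by composing the two isomorphisms $\Det_{\RR}(F)\otimes\Det_{\RR}(F')\simeq\Det_{\RR}(F\oplus F')\simeq\Det_{\RR}(F')\otimes\Det_{\RR}(F)$ coming from the two split exact sequences produces precisely the Koszul sign $(-1)^{rr'}$ in the stated formula.
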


The appearance of the sign is the reason of introducing the information 
of the rank in the definition of the determinant.

\begin{defn}
For each complex $F \in \ChP(\RR)$, we define its determinant by
\[
\Det_{\RR}(F) 
= \bigotimes_{i \in \Z} \Det_{\RR}^{(-1)^i}(F^i).
\]
Thanks to Lemma \ref{lem:28}(2), this is independent from the 
ordering of $\Z$.  We denote by $\Det_{\RR}^{-1}(F)$ its inverse.
\end{defn}

\begin{lem}\label{lem:29}
The following hold true.

(1) Let $0 \to F' \to F \to F'' \to 0$ be an exact sequence in $\ChP(\RR)$.
Then we have a natural isomorphism $\Det_{\RR}(F) \simeq 
\Det_{\RR}(F') \otimes_{\RR} \Det_{\RR}(F'')$.

(2) If $F$ is acyclic, then we have a natural isomorphism 
$\Det_{\RR}(F) \simeq (\RR, 0)$.

(3) Every quasi-isomorphism $F' \to F$ induces an isomorphism 
$\Det_{\RR}(F') \simeq \Det_{\RR}(F)$.
\end{lem}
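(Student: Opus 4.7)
The plan is to prove the three assertions in sequence, deriving (2) and (3) from (1) by standard categorical manipulations.

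For (1), my approach would be to apply Lemma \ref{lem:28}(1) degree by degree. In each degree $i$, the given short exact sequence restricts to a short exact sequence $0 \to (F')^i \to F^i \to (F'')^i \to 0$ of finitely generated projective $\RR$-modules, yielding $\Det_{\RR}(F^i) \simeq \Det_{\RR}((F')^i) \otimes \Det_{\RR}((F'')^i)$. Taking the alternating tensor product $\bigotimes_i (-)^{(-1)^i}$ and using the graded commutativity of Lemma \ref{lem:28}(2) to interleave the $(F')$-factors and $(F'')$-factors into the correct linear order produces the desired isomorphism. This step is where the rank-grading bookkeeping pays off: the sign $(-1)^{rr'}$ in the commutativity isomorphism is precisely what makes the reshuffling maps cohere and naturality hold.

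For (2), I would proceed by induction on the length of $F$. Let $a$ be the smallest index with $F^a \neq 0$; acyclicity forces $F^a \hookrightarrow F^{a+1}$. Writing $B^{i+1} = \Image(d^i)$, acyclicity gives short exact sequences
\[
0 \to B^i \to F^i \to B^{i+1} \to 0
\]
of $\RR$-modules. Starting from $B^{a+1} = F^a$ (projective) and inducting on $i$, each $B^i$ is projective. Applying (1) to each of these sequences and collecting the contributions, the factors $\Det_{\RR}^{(-1)^i}(B^{i+1})$ cancel in a telescoping fashion against $\Det_{\RR}^{(-1)^{i+1}}(B^{i+1})$, yielding $\Det_{\RR}(F) \simeq (\RR, 0)$. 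One should verify that the grading components also cancel; this is automatic because $\rank(F^i) = \rank(B^i) + \rank(B^{i+1})$ and the alternating sum telescopes.

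For (3), the mapping cone does the job. The cone $C = \Cone(f)$ of a quasi-isomorphism $f: F' \to F$ between perfect complexes lies again in $\ChP(\RR)$ and is acyclic. The short exact sequence of complexes $0 \to F \to C \to F'[1] \to 0$ combined with part (1) gives $\Det_{\RR}(C) \simeq \Det_{\RR}(F) \otimes \Det_{\RR}(F'[1])$, and directly from the definitions $\Det_{\RR}(F'[1]) \simeq \Det_{\RR}^{-1}(F')$. Since part (2) yields $\Det_{\RR}(C) \simeq (\RR, 0)$, we conclude $\Det_{\RR}(F) \simeq \Det_{\RR}(F')$.

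The main obstacle is not the existence of these isomorphisms but their \emph{naturality}, which is what makes the determinant into a well-defined functor on $\ChP(\RR)$. The subtlety lives in (1): one must check that the reshuffling of tensor factors is compatible with morphisms of short exact sequences, and the sign introduced by Lemma \ref{lem:28}(2) is exactly what forces this compatibility. Everything downstream in the section (in particular, the extension of $\Det_{\RR}$ to a homomorphism $K_0(\DePT(\RR)) \to \Inv(\RR)$) depends on this coherence, so this is the step that genuinely requires care.
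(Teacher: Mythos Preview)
Your proposal is correct and follows essentially the same route as the paper: part (1) is reduced to Lemma~\ref{lem:28}(1) applied in each degree, part (2) is obtained from (1) (the paper says only this; your telescoping argument via the images $B^i$ is the standard way to unpack it), and part (3) is exactly the paper's mapping-cone argument. The only difference is that you spell out more detail, in particular the role of the graded sign from Lemma~\ref{lem:28}(2) in making the reshuffling in (1) coherent, which the paper leaves implicit.
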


\begin{proof}
(1) and (2) follow from Lemma \ref{lem:28}(1).

(3) Consider the mapping cone $F''$ of $F' \to F$.
Then we have an exact sequence $0 \to F \to F'' \to F'[1] \to 0$.
Since $F''$ is acyclic, (1) and (2) imply
\[
\Det_{\RR}(F'[1]) \otimes \Det_{\RR}(F) \simeq \Det_{\RR}(F'') \simeq (\RR, 0).
\]
Now the observation $\Det_{\RR}(F'[1]) \simeq \Det_{\RR}^{-1}(F')$ 
completes the proof.
\end{proof}

\begin{defn}\label{defn:62}
Suppose $F \in \ChPT(\RR)$.
Since $\Frac(\RR) \otimes_{\RR} F$ is acyclic, Lemma \ref{lem:29}(2) 
gives a natural isomorphism $\Det_{\Frac(\RR)}(\Frac(\RR) \otimes_{\RR} F) 
\simeq (\Frac(\RR), 0)$.  Therefore, we have a natural map
\[
\Det_{\RR}(F) \hookrightarrow \Det_{\Frac(\RR)} (\Frac(\RR) \otimes_{\RR} F) 
  \simeq \Frac(\RR).
\]
Here we disregard the degree since it is zero.
From now on, we identify $\Det_{\RR}(F)$ with its image in $\Frac(\RR)$.
This defines a mapping $\Det_{\RR}$ from the set of 
isomorphism classes of objects of
$\DePT(\RR)$ to the set of fractional ideals of $\RR$.
\end{defn}

\begin{lem}\label{lem:30}
The map $\Det_{\RR}$ that was just defined
 induces a group homomorphism $\Det_{\RR}: K_0(\DePT(\RR)) \to \Inv(\RR)$.
\end{lem}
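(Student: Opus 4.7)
The plan is to verify three things: (i) the map $\Det_{\RR}$ of Definition~\ref{defn:62} lands in $\Inv(\RR)$, not merely in the set of fractional ideals; (ii) it is additive on distinguished triangles in $\DePT(\RR)$; and (iii) these together imply the desired factorization through $K_0(\DePT(\RR))$.

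First, for invertibility, I would observe that for $F \in \ChPT(\RR)$ each $\Det_{\RR}^{(\pm 1)}(F^i)$ is by construction an invertible $\RR$-module (underlying its graded structure), hence so is the tensor product $\Det_{\RR}(F)$. The inclusion $\Det_{\RR}(F) \hookrightarrow \Frac(\RR)$ of Definition~\ref{defn:62} therefore realizes $\Det_{\RR}(F)$ as an invertible $\RR$-submodule of $\Frac(\RR)$, i.e.\ an element of $\Inv(\RR)$. Since any object of $\DePT(\RR)$ can be represented by some $F \in \ChPT(\RR)$, and since Lemma~\ref{lem:29}(3) shows this value is independent of the chosen representative up to canonical isomorphism (and in particular up to the canonical embedding into $\Frac(\RR)$), the map is well defined on isomorphism classes.

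Next, for additivity on distinguished triangles, let $F' \to F \to F'' \to F'[1]$ be a distinguished triangle in $\DePT(\RR)$. The key reduction is that any such triangle is isomorphic in the derived category to one coming from an honest short exact sequence $0 \to \widetilde{F'} \to \widetilde{F} \to \widetilde{F''} \to 0$ in $\ChPT(\RR)$: replace $F'$ and $F$ by quasi-isomorphic complexes of projectives, and take $\widetilde{F}$ to be (the shift of) the mapping cone construction, so that the natural maps fit in a genuine short exact sequence of complexes with the right cohomology. Applying Lemma~\ref{lem:29}(1) to this sequence gives a canonical isomorphism of graded invertible $\RR$-modules $\Det_{\RR}(\widetilde{F}) \simeq \Det_{\RR}(\widetilde{F'}) \otimes \Det_{\RR}(\widetilde{F''})$, which under the embedding into $\Frac(\RR)$ becomes the equality of fractional ideals
\[
\Det_{\RR}(F) = \Det_{\RR}(F') \cdot \Det_{\RR}(F'')
\]
(the embeddings being compatible thanks to Lemma~\ref{lem:29}(2) applied to the acyclic complexes $\Frac(\RR) \otimes_{\RR} (-)$). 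Combined with Lemma~\ref{lem:29}(3), which ensures that replacing a term by a quasi-isomorphic one does not change the image in $\Frac(\RR)$, this gives the multiplicativity in $\Inv(\RR)$ needed for the map to factor through the Grothendieck group.

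The main subtlety, and the step I would be most careful about, is the reduction from an arbitrary distinguished triangle to a genuine short exact sequence of complexes in $\ChPT(\RR)$: one must verify that the replacement can be performed within perfect torsion complexes (so that determinants are defined in the sense of Definition~\ref{defn:62}), and that the resulting isomorphism of determinants is compatible with the embeddings into $\Frac(\RR)$. Once this compatibility is in hand, the universal property of $K_0$ of the triangulated category $\DePT(\RR)$ produces the desired homomorphism $\Det_{\RR}: K_0(\DePT(\RR)) \to \Inv(\RR)$.
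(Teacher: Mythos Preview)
Your proposal is correct and follows essentially the same approach as the paper, which simply cites Lemma~\ref{lem:29}(1) and (3) in a one-line proof. You have supplied the details the paper leaves implicit---in particular the reduction of an arbitrary distinguished triangle to a short exact sequence in $\ChPT(\RR)$ and the compatibility of the trivializations---but the underlying argument is the same.
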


\begin{proof}
This follows from Lemma \ref{lem:29} (1) and (3). 
\end{proof}

As a preparation for the main arguments, we now formulate 
two lemmas, relating determinants to Fitting ideals.

\begin{lem}\label{lem:31}
Let $F \in \DePT(\RR)$ be a complex and $n$ be an integer.
Suppose that we have $H^i(F) = 0$ for any $i \neq n$ and 
$\pd_{\RR}(H^n(F)) \leq 1$.  Let $\mathbb Q$ be the
foll ring of quotients of $\RR$ and let $\lambda
=\lambda_{F_{\mathbb Q}}$ be the canonical
trivialization $\Det_{\mathbb Q}(F_{\mathbb Q}) \to \mathbb Q$. 
Then we have
\[
\Fitt_{\RR}(H^n(F)) = \Det_{\RR}(F)^{(-1)^{n-1}}
\]
in $\Inv(\RR)$.
\end{lem}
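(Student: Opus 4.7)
The plan is to replace $F$ by an explicit two-term complex of finitely generated projective $\RR$-modules with the same determinant, and then verify the identity by direct computation on that model.

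First, since $\RR$ is Noetherian and $F \in \DeP(\RR)$, the cohomology module $M := H^n(F)$ is finitely generated. By hypothesis $\pd_{\RR}(M) \le 1$, so there is a short exact sequence
\[
0 \to P_1 \xrightarrow{d} P_0 \to M \to 0
\]
with $P_0, P_1$ finitely generated projective. Placing this in cohomological degrees $n-1$ and $n$ produces a complex $G \in \ChP(\RR)$ together with a quasi-isomorphism $G \to F$ in $\DeP(\RR)$. By Lemma \ref{lem:29}(3) this quasi-isomorphism induces an equality $\Det_{\RR}(F) = \Det_{\RR}(G)$ of fractional ideals in $\Frac(\RR)$ via the canonical trivializations, so it suffices to prove the identity for $G$ in place of $F$.

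Next I would show both sides of the proposed formula are locally principal and compute them. Because $M$ is torsion and $P_1$ is projective (hence torsion-free), tensoring the resolution with $\Frac(\RR)$ shows that $d \otimes \Frac(\RR)$ is an isomorphism; in particular $P_0$ and $P_1$ have the same locally constant rank $r$. Localizing at a point of $\Spec(\RR)$ where both $P_i$ are free and choosing bases gives an $r \times r$ matrix representing $d$, whose determinant $\det(d)$ generates $\Fitt_{\RR}(M)$ by the usual presentation formula for Fitting ideals.

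Finally I would compare with $\Det_{\RR}(G)$. By definition, $\Det_{\RR}(G) = \Det_{\RR}(P_1)^{(-1)^{n-1}} \otimes \Det_{\RR}(P_0)^{(-1)^n}$ as a graded invertible $\RR$-module, and the canonical trivialization $\lambda$ (Definition \ref{defn:62}) is computed using $\Det(d \otimes \Frac(\RR))$, which in the chosen bases sends the generator of $\Det(P_1)$ to $\det(d)$ times the generator of $\Det(P_0)$. Tracking the dualities carefully, one finds that the image of $\Det_{\RR}(G)$ in $\Frac(\RR)$ equals $(\det d)$ when $n$ is odd and $(\det d)^{-1}$ when $n$ is even; in either case, $\Det_{\RR}(G)^{(-1)^{n-1}} = (\det d) = \Fitt_{\RR}(M)$, which is the claim. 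The principal obstacle is exactly this sign bookkeeping: one must keep straight the positioning of $P_0$ vs $P_1$ in degrees $n$ vs $n-1$ and the resulting alternation of inverses in the graded determinant, so that the $(-1)^{n-1}$ exponent emerges correctly. The remainder of the argument is formal once the definitions are unfolded.
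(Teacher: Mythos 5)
Your argument is correct and follows essentially the same route as the paper's proof: replace $F$ by a two-term complex of projectives of equal rank sitting in degrees $n-1$ and $n$, identify $\Fitt_{\RR}(H^n(F))$ with $(\det d)$, and compute the canonical trivialization of the determinant line to get $(\det d)^{\pm 1}$ with the sign governed by the parity of $n$. The only cosmetic difference is that the paper first translates to $n=0$ before doing the matrix computation, whereas you carry the general $n$ through the sign bookkeeping directly.
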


\begin{proof}
By translation, we may and will assume that $n = 0$.
By using truncations, we see that the complex $F$ is quasi-isomorphic 
to the complex $H^0(F)[0]$.
Taking a projective resolution of $H^0(F)$ of length $2$, we can construct 
a perfect complex $F' \in \ChPT(\RR)$ which is quasi-isomorphic to $F$ 
such that $(F')^i = 0$ for $i \neq -1, 0$.
We can assume that both $(F')^{-1}$ and $(F')^0$ are free $\RR$-modules 
of the same rank $a$.

We take bases $e_1, \dots, e_a$ of $(F')^{-1}$ and $f_1, \dots, f_a$ of $(F')^{0}$.
Then we can identify the homomorphism $d: (F')^{-1} \to (F')^0$ 
with a matrix $A \in M_a(\RR)$, and we have $\Fitt_{\RR}(H^0(\RR)) = (\det(A))$.

On the other hand, one may verify $\Det_{\RR}(F')^{-1} = (\det(A))$.
This is a standard fact, but we give a sketch of the proof for completeness.
Let $f_1^*, \dots, f_a^*$ be the dual basis of $f_1, \dots, f_a$.
Put $\QQ = \Frac(\RR)$ for notational simplicity.
Then we have a natural isomorphism
\begin{align}
\Det_{\QQ}^{-1}(\QQ \otimes_{\RR} F') 
&= \bigwedge^a_{\QQ} (\QQ \otimes_{\RR} (F')^{-1}) \otimes_{\QQ} 
\Hom_{\QQ}\left(\bigwedge^a_{\QQ} (\QQ \otimes_{\RR} (F')^{0}) ,\QQ\right)\\
&\simeq \bigwedge^a_{\QQ} (\QQ \otimes_{\RR} (F')^{-1}) \otimes_{\QQ} 
\bigwedge^a_{\QQ} \Hom_{\QQ}((\QQ \otimes_{\RR} (F')^{0}) ,\QQ),
\end{align}
under which the trivialization $\Det_{\QQ}^{-1}(\QQ \otimes_{\RR} F') \simeq \QQ$ is given by
\[
(x_1 \wedge \dots \wedge x_a) \otimes (\varphi_1 \wedge \dots \wedge \varphi_a) \mapsto \det(\varphi_i(d(x_j)))_{i,j}
\]
for $x_1, \dots, x_a \in \QQ \otimes_{\RR} (F')^{-1}$ and $\varphi_1, \dots, \varphi_a \in \Hom_{\QQ}((\QQ \otimes_{\RR} (F')^{0}) ,\QQ)$.
Now
\[
\Det^{-1}_{\RR}(F') \simeq \bigwedge^a_{\RR} (F')^{-1} \otimes_{\RR} 
\bigwedge^a_{\RR} \Hom_{\RR}((F')^{0} ,\RR)
\]
 has $(e_1 \wedge \dots \wedge e_a) \otimes (f_1^* \wedge \dots \wedge f_a^*)$
as a basis over $\RR$ and it goes to
\[
\det(\varphi_i(d(x_j)))_{i,j} = \det(A)
\]
by the trivialization.
This proves $\Det_{\RR}(F')^{-1} = (\det(A))$.
\end{proof}

\begin{lem}\label{lem:34}
Let $F \in \DePT(\RR)$ be a complex and $n$ be an integer.
Suppose that we have $H^i(F) = 0$ for $i \neq n, n+1$ and $H^i(F)$ 
does not contain any nonzero finite submodule for $i = n, n+1$.
Then we have
\[
\Fitt_{\RR}(H^n(F)^*) = \Det_{\RR}(F)^{(-1)^{n+1}} \Fitt_{\RR}(H^{n+1}(F)),
\]
where the superscript $(-)^*$ denotes the Iwasawa adjoint.
\end{lem}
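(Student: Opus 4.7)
The plan is to reduce to $n=0$ and then split the determinant of $F$ along the canonical truncation triangle, identifying each piece via Lemma \ref{lem:31}.

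First, I would replace $F$ by the shift $F[n]$, which puts the cohomology in degrees $0$ and $1$ (so that $H^0(F[n]) = H^n(F)$ and $H^1(F[n]) = H^{n+1}(F)$) and transforms the determinant by $\Det_{\RR}(F[n]) = \Det_{\RR}(F)^{(-1)^n}$. This absorbs the sign $(-1)^{n+1}$ on the right-hand side of the assertion, so it suffices to prove the case $n=0$:
\[
\Fitt_{\RR}(H^0(F)^*) = \Det_{\RR}(F)^{-1}\, \Fitt_{\RR}(H^1(F)).
\]

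Next, I would apply the truncation triangle $H^0(F)[0] \to F \to H^1(F)[-1] \to$ together with the additivity of $\Det_{\RR}$ on $K_0(\DePT(\RR))$ from Lemma \ref{lem:30} to write $\Det_{\RR}(F) = \Det_{\RR}(H^0(F)[0]) \cdot \Det_{\RR}(H^1(F)[-1])$. For this to make sense both $H^0(F)[0]$ and $H^1(F)[-1]$ must lie in $\DePT(\RR)$, i.e., each $H^i(F)$ must have finite projective dimension over $\RR$. Here I would use the hypothesis of no nonzero finite submodule: after decomposing $\RR = \Z_p[[\GG]]$ along characters of $\GG^{(p')}$, on each local factor this forces $\pd_{\RR}(H^i(F)) \leq 1$. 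Granting this, Lemma \ref{lem:31} supplies $\Det_{\RR}(H^0(F)[0]) = \Fitt_{\RR}(H^0(F))^{-1}$ and $\Det_{\RR}(H^1(F)[-1]) = \Fitt_{\RR}(H^1(F))$, so
\[
\Det_{\RR}(F) = \Fitt_{\RR}(H^1(F)) \cdot \Fitt_{\RR}(H^0(F))^{-1}.
\]

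Finally, for a finitely generated torsion $\RR$-module $M$ with $\pd_{\RR}(M) \leq 1$, a square free resolution $0 \to \RR^a \xrightarrow{A} \RR^a \to M \to 0$ dualises to a presentation $\RR^a \xrightarrow{A^{\mathrm t}} \RR^a \to M^* \to 0$, so that $\Fitt_{\RR}(M^*) = (\det A^{\mathrm t}) = (\det A) = \Fitt_{\RR}(M)$. Applying this with $M = H^0(F)$ and substituting into the preceding display yields the desired identity at $n=0$. The main obstacle will be the projective-dimension claim in the previous paragraph: showing that ``no nonzero finite submodule'' suffices to ensure $\pd_{\RR}(H^i(F)) \leq 1$ relies on the character decomposition of $\RR$ and a careful analysis of each local factor. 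An alternative, perhaps cleaner, route would be to dualise the complex via $\derR\Hom_{\RR}(-, \RR)$ and use a local-duality spectral sequence to identify its cohomology with the adjoints $H^i(F)^*$, bypassing some of this bookkeeping at the cost of needing the same underlying hypothesis to kill higher $\operatorname{Ext}$ groups.
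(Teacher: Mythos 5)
Your reduction to $n=0$ and the final observation that $\Fitt_{\RR}(M^*)=\Fitt_{\RR}(M)$ when $M$ admits a square presentation $0\to\RR^a\to\RR^a\to M\to 0$ are both fine. The proof breaks at the central step: the claim that the hypothesis ``$H^i(F)$ contains no nonzero finite submodule'' forces $\pd_{\RR}(H^i(F))\le 1$. This is false. Over $\RR=\Z_p[[\GG]]$ with $\GG$ having a nontrivial finite $p$-part $H$, the module $\Z_p$ (trivial action) is $\Z_p$-free, hence has no finite submodules, yet has infinite projective dimension because it is not cohomologically trivial over $H$; the character decomposition does not rescue this, since the problem already lives in the trivial-character component. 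The implication ``no finite submodule $\Rightarrow \pd\le 1$'' is only valid once you already know $\pd_{\RR}(H^i(F))<\infty$, which for a perfect complex holds when the cohomology is concentrated in a \emph{single} degree (this is exactly how Corollary \ref{cor:42} argues), but not when it is spread over two degrees. Consequently $H^0(F)[0]$ and $H^1(F)[-1]$ need not lie in $\DePT(\RR)$, the truncation triangle does not decompose $\Det_{\RR}(F)$ into pieces to which Lemma \ref{lem:31} applies, and your main display fails. Note that the intended application (Proposition \ref{prop:36}) has $H^2(k_v,\T)\simeq Z_v=\Z_p[\GG/\GG_v]$, precisely a module of infinite projective dimension in general; and if your projective-dimension claim were true, the Iwasawa adjoint in the statement would be redundant by your own last paragraph, whereas it is there precisely because the general case needs it.

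The paper's proof circumvents this as follows: after truncating $F$ to a two-term complex, it interpolates a four-term exact sequence $0\to H^0(F)\to P^0\to P^1\to H^1(F)\to 0$ in which the middle terms $P^0,P^1$ (not the cohomology modules) are arranged to have $\pd_{\RR}\le 1$, using the no-finite-submodule hypothesis only to guarantee this for the $P^i$. It then invokes the algebraic result \cite[Lemma 5]{BG03} (or \cite[Remark 4.8]{Kata}), which gives $\Fitt_{\RR}(H^0(F)^*)=\Fitt_{\RR}(P^0)\Fitt_{\RR}(P^1)^{-1}\Fitt_{\RR}(H^1(F))$ for such a sequence, and separately identifies $\Det_{\RR}(F)=\Fitt_{\RR}(P^0)^{-1}\Fitt_{\RR}(P^1)$ from the quasi-isomorphism $F\simeq[P^0\to P^1]$. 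If you want to repair your argument you must either reproduce this intermediate resolution or supply a genuinely different mechanism (your sketched $\derR\Hom_{\RR}(-,\RR)$ route would need the same kind of input to control the relevant $\operatorname{Ext}$ groups); as written, the gap is essential.
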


\begin{proof}
By translation, we may assume that $n = 0$.
By using truncations, we see that the complex $F$ is quasi-isomorphic 
to a complex $F'$ such that $(F')^i = 0$ for $i \neq 0, 1$.
Moreover, the construction of the truncations allows us to assume 
$(F')^1$ is a projective $\RR$-module.
Then we have an exact sequence
\[
0 \to H^0(F) \to (F')^0 \to (F')^1 \to H^1(F) \to 0.
\]
We can construct a projective $\RR$-module $\overline{F}$ and a homomorphism 
$\overline{F} \to (F')^0$ such that the composition $\overline{F} \to (F')^0 \to (F')^1$ 
is an injective homomorphism with torsion cokernel.
Then, by defining $P_1$ and $P_2$ as the cokernel of $\overline{F} \to (F')^0$ 
and $\overline{F} \to (F')^1$ respectively, we have an exact sequence
\[
0 \to H^0(F) \to P^0 \to P^1 \to H^1(F) \to 0.
\]
By the assumption, none of these modules contain any nonzero finite 
submodule. Then by the construction, we deduce that 
$\pd_{\RR}(P^i) \leq 1$ for $i = 0, 1$.
By a purely algebraic result (see \cite[Lemma 5]{BG03} or 
\cite[Remark 4.8]{Kata}), we have
\[
\Fitt_{\RR}(H^0(F)^*) = \Fitt_{\RR}(P^0)\Fitt_{\RR}(P^1)^{-1} \Fitt_{\RR}(H^{1}(F)).
\]

On the other hand, by construction, the complex $F$ is quasi-isomorphic 
to the complex $[P^0 \to P^1]$ located at degrees $0$ and $1$.
Hence the distinguished triangle 
\[
P^0[0] \to [P^0 \to P^1] \to P^1[-1] \to
\]
shows that
\[
\Det_{\RR}(F) = \Det_{\RR}(P^0[0]) \Det_{\RR}(P^1[-1])
= \Fitt_{\RR}(P^0)^{-1} \Fitt_{\RR}(P^1).
\]
The final equation follows from Lemma \ref{lem:31}.
This completes the proof.
\end{proof}

     \subsection{Description of $C_S$ by $p$-adic $L$-functions}

Let us now go back to the arithmetic situation. Recall that it is
our goal to determine the Fitting ideal of $H^1(C_S)$. We first express
it as a combination of determinants of one global and some local complexes.

\begin{lem}\label{lem:32}
We have
\[
\Fitt_{\RR}(H^1(C_S)) 
= \Det_{\RR}(\derR\Gamma(k_S/k, \T^{\dual}(1))^{\dual}) 
 \prod_{v \in S'} \Det_{\RR}(\derR\Gamma(k_v, \T))^{-1}.
\]
\end{lem}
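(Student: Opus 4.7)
The plan is to apply Lemma \ref{lem:31} directly to $F=C_S$ with $n=1$ and then use the multiplicativity of the determinant on the defining distinguished triangle \eqref{eq:21} of $C_S$.

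First, I would verify the hypotheses of Lemma \ref{lem:31}. The complex $C_S$ lies in $\DePT(\RR)$ (this was noted right after the distinguished triangle \eqref{eq:21}, using Corollary \ref{cor:25}). Proposition \ref{prop:26} shows that $H^i(C_S)=0$ for $i\ne 1$, and Corollary \ref{cor:42} gives $\pd_{\RR}(H^1(C_S))\le 1$. So Lemma \ref{lem:31} applies and yields
\[
\Fitt_{\RR}(H^1(C_S)) = \Det_{\RR}(C_S)^{(-1)^{1-1}} = \Det_{\RR}(C_S).
\]

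Next I would compute $\Det_{\RR}(C_S)$ from the distinguished triangle
\[
\bigoplus_{v\in S'} \derR\Gamma(k_v,\T) \longrightarrow \derR\Gamma(k_S/k,\T^{\dual}(1))^{\dual}[-2] \longrightarrow C_S \longrightarrow.
\]
By Lemma \ref{lem:30}, the map $\Det_{\RR}\colon K_0(\DePT(\RR))\to \Inv(\RR)$ is a group homomorphism, and a distinguished triangle yields the relation $[C_S] = [\derR\Gamma(k_S/k,\T^{\dual}(1))^{\dual}[-2]] - [\bigoplus_{v\in S'} \derR\Gamma(k_v,\T)]$ in $K_0(\DePT(\RR))$. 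Hence
\[
\Det_{\RR}(C_S) = \Det_{\RR}\bigl(\derR\Gamma(k_S/k,\T^{\dual}(1))^{\dual}[-2]\bigr)\cdot \Det_{\RR}\Bigl(\bigoplus_{v\in S'}\derR\Gamma(k_v,\T)\Bigr)^{-1}.
\]

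Finally I would perform two bookkeeping simplifications. First, shifting a complex by an even integer does not change its determinant: directly from the definition $\Det_{\RR}(F)=\bigotimes_{i}\Det_{\RR}^{(-1)^i}(F^i)$ we get $\Det_{\RR}(F[-2])=\Det_{\RR}(F)$, so the first factor equals $\Det_{\RR}(\derR\Gamma(k_S/k,\T^{\dual}(1))^{\dual})$. Second, determinants are multiplicative over finite direct sums (by Lemma \ref{lem:29}(1) applied to the split exact sequences of each summand), giving $\Det_{\RR}(\bigoplus_{v\in S'}\derR\Gamma(k_v,\T)) = \prod_{v\in S'}\Det_{\RR}(\derR\Gamma(k_v,\T))$. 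Substituting these into the previous equation produces exactly the claimed formula.

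I do not anticipate a real obstacle here; the statement is essentially a formal consequence of Lemma \ref{lem:31} combined with the additivity of $\Det_{\RR}$ on distinguished triangles and direct sums, once Proposition \ref{prop:26} and Corollary \ref{cor:42} are in hand. The only point worth double‑checking is that the sign $(-1)^{n-1}$ in Lemma \ref{lem:31} becomes trivial for $n=1$ and that the shift by $-2$ (an even integer) does not alter the determinant.
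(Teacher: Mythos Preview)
Your proposal is correct and follows essentially the same approach as the paper: apply Lemma \ref{lem:31} with $n=1$ to get $\Fitt_{\RR}(H^1(C_S))=\Det_{\RR}(C_S)$, then use Lemma \ref{lem:30} and the distinguished triangle \eqref{eq:21} to split the determinant, noting that the shift by $-2$ leaves the determinant unchanged. The paper's proof is just a terser version of what you wrote; your explicit verification of the hypotheses via Proposition \ref{prop:26} and Corollary \ref{cor:42} and your remark on direct sums are fine elaborations but add nothing new.
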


\begin{proof}
By Lemma \ref{lem:31}, we have
\[
\Fitt_{\RR}(H^1(C_S)) = \Det_{\RR}(C_S).
\]
But the definition \eqref{eq:21} of $C_S$ and Lemma \ref{lem:30} imply
\[
\Det_{\RR}(C_S) 
= \Det_{\RR}(\derR\Gamma(k_S/k, \T^{\dual}(1))^{\dual}[-2]) 
 \prod_{v \in S'} \Det_{\RR}(\derR\Gamma(k_v, \T))^{-1}.
\]
This completes the proof, since the shift by $-2$ does not change the determinant.
\end{proof}

Now we deal with the global term in the preceding lemma.
The following is a formulation of an abelian equivariant 
main conjecture. 

\begin{thm}\label{thm:33}
Suppose the $\mu$-invariant of $X_{S_p}$ vanishes.
Then we have
\[
\Det_{\RR}(\derR\Gamma(k_S/k, \T^{\dual}(1))^{\dual}) = (\theta_S).
\]
\end{thm}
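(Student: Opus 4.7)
The plan is to recognize Theorem~\ref{thm:33} as the equivariant Iwasawa main conjecture for totally real fields, formulated in terms of the determinant of a Selmer-type complex, and to deduce it from the classical main conjecture of Wiles together with the equivariant refinement of Ritter--Weiss (which is precisely where the hypothesis $\mu(X_{S_p})=0$ enters). Once that identification is made, the proof consists of translating between the ``determinant of a perfect complex'' formulation above and the more traditional formulation as an equality of Fitting (or characteristic) ideals with equivariant Stickelberger elements.

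I would decompose $\RR \simeq \prod_\chi \OO_\chi[[\GG^{(p)}]]$ according to characters $\chi$ of $\GG^{(p')}$ as in Subsection~\ref{subsecDecGroupRing} and argue component by component. For a non-trivial character $\chi$, the cohomology calculation of Proposition~\ref{prop:35} shows that the $\chi$-component of $\derR\Gamma(k_S/k, \T^{\dual}(1))^{\dual}$ has $H^0 = 0$ and $H^1 = X_S^\chi$. Since $\OO_\chi[[\GG^{(p)}]]$ is a regular local ring, $X_S^\chi$ has projective dimension at most one, so Lemma~\ref{lem:31} identifies
\[
\Det_{\RR^\chi}\!\left(\derR\Gamma(k_S/k, \T^{\dual}(1))^{\dual}\right)^{\!\chi} = \Fitt_{\RR^\chi}(X_S^\chi).
\]
The main conjecture in its Fitting-ideal form (which, since $\pd\leq 1$, coincides with the characteristic-ideal form of Wiles, see \cite{GK15,GK17}) identifies this with $(\theta_S)^\chi$, settling the non-trivial components.

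For the trivial character component, where $H^0 = \Z_p$ is nonzero and $\theta_S$ is only a pseudo-measure, I would apply Lemma~\ref{lem:34} instead. Using that neither $\Z_p$ nor $X_S$ contains a nonzero finite $\RR$-submodule, one obtains
\[
\Det_\RR\!\left(\derR\Gamma(k_S/k,\T^{\dual}(1))^{\dual}\right) = \Fitt_\RR(X_S)\cdot\Fitt_\RR(\Z_p^*)^{-1},
\]
so the task reduces to identifying this product of fractional ideals with $(\theta_S)$. Computing the Iwasawa adjoint $\Z_p^*$ explicitly (it is controlled by the cyclotomic twist and the augmentation) converts the right-hand side into the Ritter--Weiss equivariant Stickelberger element, and the equality $(\theta_S) = \Fitt_\RR(X_S)\,\Fitt_\RR(\Z_p^*)^{-1}$ is exactly the $\mu=0$ equivariant main conjecture in its pseudo-measure form.

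The deepest input is of course the equivariant main conjecture itself; the reduction I have sketched is essentially formal. The main technical obstacle is the bookkeeping at the trivial character: one must match the determinant sign conventions of Definition~\ref{defn:62} with those used in the Ritter--Weiss formulation, verify the behaviour of $\theta_S$ near the augmentation ideal, and check that the dependence on $S$ on both sides agrees (using Lemma~\ref{int2}(1) if one wishes to reduce to a minimal $S$ first). These compatibilities have been addressed in the authors' earlier work \cite{GK15,GK17,GKT19}, so citing those sources should close the argument without new surprises.
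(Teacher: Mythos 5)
Your overall strategy---reduce the statement to the Ritter--Weiss equivariant main conjecture under $\mu=0$ and translate into determinant language---is the same as the paper's, but your treatment of the trivial character component contains a genuine error in the placement of the Iwasawa adjoint, and this is not a sign convention that washes out. The complex $F=\derR\Gamma(k_S/k,\T^{\dual}(1))^{\dual}$ has its cohomology in degrees $-1$ and $0$, with $H^{-1}(F)\simeq X_S$ and $H^{0}(F)\simeq\Z_p$ (dualizing negates degrees; this is also what the long exact sequence in Proposition \ref{prop:26} and Nickel's two-term complex $[\Cok(\Psi)\to\Cok(\psi)]$ in degrees $-1,0$ require), not $H^0=\Z_p$, $H^1=X_S$ as you assume. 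Lemma \ref{lem:34} puts the adjoint on the \emph{lower-degree} cohomology, so the correct identity is
\[
\Det_{\RR}(F)=\Fitt_{\RR}(X_S^*)\,\Fitt_{\RR}(\Z_p)^{-1},
\]
not $\Fitt_{\RR}(X_S)\,\Fitt_{\RR}(\Z_p^*)^{-1}$. Since $\Z_p^*\simeq\Z_p$, your version amounts to replacing $\Fitt_{\RR}(X_S^*)$ by $\Fitt_{\RR}(X_S)$, and these differ: on the trivial component $X_S$ has projective dimension greater than one, and the determination of $\Fitt_{\RR}(X_S)$ is precisely the nontrivial main theorem of \cite{GK15}, \cite{GKT19}, whose answer is \emph{not} $(\theta_S)\Fitt_{\RR}(\Z_p)$ in general. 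Hence the equality ``$(\theta_S)=\Fitt_{\RR}(X_S)\Fitt_{\RR}(\Z_p^*)^{-1}$'' that you invoke as the pseudo-measure form of the main conjecture is false as stated. The correct form, $\Fitt_{\RR}(X_S^*)=(\theta_S)\Fitt_{\RR}(\Z_p)$, follows from the Ritter--Weiss four-term sequence $0\to X_S\to\Cok(\Psi)\to\Cok(\psi)\to\Z_p\to0$ (with both middle terms of projective dimension at most one), the algebraic lemma \cite[Lemma 5]{BG03}, and the Ritter--Weiss theorem $\Fitt_{\RR}(\Cok(\Psi))\Fitt_{\RR}(\Cok(\psi))^{-1}=(\theta_S)$. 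With that correction your route closes up and is essentially the paper's; the paper simply bypasses the adjoint by quoting Nickel's identification \cite[Theorem 2.4]{Nic13} of $F$ with $[\Cok(\Psi)\to\Cok(\psi)]$ and computing the determinant of that two-term complex directly.

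Two smaller points. At non-trivial characters your conclusion $\Det_{\RR^{\chi}}(F^{\chi})=\Fitt_{\RR^{\chi}}(X_S^{\chi})$ is fine (the degree ambiguity is harmless there, as $(-1)^{n-1}$ takes the same value for $n=\pm1$), but your justification of $\pd_{\RR^{\chi}}(X_S^{\chi})\le1$ is not: $\OO_{\chi}[[\GG^{(p)}]]$ is not regular when $\GG^{(p)}$ has a nontrivial finite part, and even over a two-dimensional regular local ring a torsion module can have projective dimension two. The correct argument is the one used in Corollary \ref{cor:42}: finite projective dimension (perfectness of the complex concentrated in one degree) combined with the fact that $X_S$ has no nonzero finite submodule. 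Finally, the comparison between the Galois-cohomology complex and the Ritter--Weiss construction is a substantive input and should be cited explicitly rather than subsumed under ``bookkeeping.''
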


\begin{proof}
We use results of Ritter and Weiss; see \cite[\S 4]{RW04}. 
They constructed a certain exact sequence
\[
0 \to X_S(K_{\infty}) \to \Cok(\Psi) \to \Cok(\psi) \to \Z_p \to 0
\]
of finitely generated torsion $\RR$-modules with $\pd_{\RR}(\Cok(\Psi)) 
\leq 1, \pd_{\RR}(\Cok(\psi)) \leq 1$ (we do not give the definitions of $\Psi$ and 
$\psi$ here).
Moreover, the equality known as ``equivariant main conjecture'' 
\[
\Fitt_{\RR}(\Cok(\Psi)) \Fitt_{\RR}(\Cok(\psi))^{-1} = (\theta_S)
\]
is an established theorem (assuming the vanishing of the 
$\mu$-invariant), thanks to Ritter-Weiss. Note that even in the non-commutative case, Ritter and Weiss \cite{RiWe2011} and Kakde \cite{Kak2013} proved analogous results, again assuming the vanishing of the $\mu$-invariant.
 For the equivalence of the various
formulations of the equivariant main conjecture, see also 
\cite[Theorem 2.2]{Nic13}.

By Nickel \cite[Theorem 2.4]{Nic13}, the complex 
$\derR\Gamma(k_S/k, \T^{\dual}(1))^{\dual}$ is isomorphic 
in $\DePT(\RR)$ to the complex
\[
[\Cok(\Psi) \to \Cok(\psi)]
\]
located at degrees $-1, 0$.
Similarly as in the final paragraph of the proof of Lemma \ref{lem:34},
we have
\[
\Det_{\RR}([\Cok(\Psi) \to \Cok(\psi)]) 
  = \Fitt_{\RR}(\Cok(\Psi)) \Fitt_{\RR}(\Cok(\psi))^{-1}.
\]
This completes the proof.
\end{proof}

In preparation for the final part of the proof, we state
another lemma, which by now seems to be well known. 
Recall $\Gamma_K = \Gal(K_{\infty}/K)$ and put 
$\Lambda = \Z_p[[\Gamma_K]]$, which is a subring of $\RR = \Z_p[[\GG]]$.
For a prime ideal $\fq$ of $\Lambda$, let $\RR_{\fq}$ be the 
localization of $\RR$ with respect to the multiplicative set 
$\Lambda \setminus \fq$.

\begin{lem}\label{lem:43}
Let $f, g \in \RR$ be non-zero-divisors and $\II$ an ideal of $\RR$.
Suppose that $\II\RR_{p\Lambda} = \RR_{p\Lambda}$.
If $f\II = g\II$ holds, then $f\RR = g\RR$ holds.
\end{lem}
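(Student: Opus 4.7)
The plan is to reduce the equality of principal ideals $f\RR = g\RR$ to the claim that $u := f g^{-1} \in \Frac(\RR)$ is a unit of $\RR$. First, I would localize the equation $f\II = g\II$ at $p\Lambda$: by the hypothesis $\II\RR_{p\Lambda} = \RR_{p\Lambda}$ this becomes $f\RR_{p\Lambda} = g\RR_{p\Lambda}$, so $u \in \RR_{p\Lambda}^{\times}$. Note in passing that $\RR$ is a finitely generated free $\Lambda$-module (via any set-theoretic section of $\GG \twoheadrightarrow \Gal(K/k)$), so elements of $\Lambda \setminus p\Lambda$ act as non-zero-divisors on $\RR$ and the canonical map $\RR \hookrightarrow \RR_{p\Lambda}$ is injective; consequently any element of $\II$ whose image in $\RR_{p\Lambda}$ is a unit is automatically a non-zero-divisor of $\RR$, and such an element exists in $\II$ by the fullness hypothesis.

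Second, I would show that $u$ is integral over $\RR$. The equality $f\II = g\II$ inside $\RR$ is equivalent to $u\II = \II$ inside $\Frac(\RR)$, so multiplication by $u$ defines an $\RR$-linear automorphism of the finitely generated $\RR$-module $\II$. Applying the determinant trick (Cayley--Hamilton), one obtains a monic polynomial $P(T) \in \RR[T]$ with $P(u)\II = 0$ in $\Frac(\RR)$. Since $\II$ is faithful by the previous paragraph, we deduce $P(u) = 0$; running the same argument with $u^{-1}$ in place of $u$ shows that both $u$ and $u^{-1}$ are integral over $\RR$.

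The third and most delicate step is to upgrade ``$u$ integral over $\RR$ and a unit in $\RR_{p\Lambda}$'' to ``$u$ is a unit of $\RR$.'' My plan is to exploit that $\RR$ is finitely generated free over the regular local ring $\Lambda$, hence Cohen--Macaulay and in particular satisfies Serre's condition $S_{2}$, so that $\RR$ equals the intersection of its localizations $\RR_{\fq}$ at height-one primes $\fq$, taken inside $\Frac(\RR)$. For $\fq$ lying over $p\Lambda$ we already have $u \in \RR_{\fq}$ by Step~1. For $\fq$ not lying over $p\Lambda$ --- equivalently $\fq \cap \Lambda$ is generated by a distinguished polynomial --- the localization $\RR_{\fq}$ is a one-dimensional local ring, and the key claim is that it is a discrete valuation ring; granting this, integrality of $u$ gives $u \in \RR_{\fq}$. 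Intersecting yields $u \in \RR$, and symmetrically $u^{-1} \in \RR$, whence $u \in \RR^{\times}$ and $f\RR = g\RR$.

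The main obstacle is the claim in the last step that the height-one localizations of $\RR$ away from $p\Lambda$ are DVRs --- equivalently, that the non-normal locus of $\RR$ is concentrated above $p\Lambda$. In the torsion-free case $\GG^{(p)} \cong \Z_{p}^{d}$ this is immediate since $\RR$ is then a regular local ring; when $\GG^{(p)}$ has $p$-torsion one decomposes $\RR$ according to characters of the torsion subgroup and checks component-wise that all non-regular phenomena are confined above $p\Lambda$. If that structural analysis proves more delicate than expected, a fallback is to work directly with the explicit relations $sf = ag$ and $s'g = a'f$ coming from Step~1 together with the identity $aa' = ss' \in \Lambda \setminus p\Lambda$ (obtained by eliminating $f$ and $g$) and manipulate these inside $\RR$, viewed as a free $\Lambda$-module, to extract $u \in \RR$ directly.
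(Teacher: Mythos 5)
The paper does not actually prove this lemma --- it is stated as ``by now well known'' and used without proof --- so there is no argument of the authors to compare yours against. Your proof is correct and is essentially the standard one for statements of this type: localize at $p\Lambda$ to handle the characteristic-$p$ height-one primes, use the determinant trick on the faithful module $\II$ (faithful because $\II\RR_{p\Lambda}=\RR_{p\Lambda}$ forces $\II$ to contain an element of $\Lambda\setminus p\Lambda$, which is a non-zero-divisor on the free $\Lambda$-module $\RR$) to get integrality of $u=f/g$ and $u^{-1}$, and then glue over height-one primes using that $\RR$ is Cohen--Macaulay, hence $S_2$.

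Two points deserve tightening. First, since $\RR$ is not a domain, the phrase ``$\RR$ is the intersection of its height-one localizations inside $\Frac(\RR)$'' needs care: $\Frac(\RR)$ is only the total quotient ring and does not map to $\RR_{\fq}$ for a non-minimal prime $\fq$. The clean formulation is: for $a,s\in\RR$ with $s$ a non-zero-divisor, $S_2$ gives $\mathrm{Ass}(\RR/s\RR)=\Min(\RR/s\RR)$, a set of height-one primes, and $\RR/s\RR$ injects into the product of its localizations at these primes; hence $a\in s\RR$ as soon as $a\in s\RR_{\fq}$ for every height-one $\fq$. This is exactly what your argument supplies (with $a=f$, $s=g$ and symmetrically). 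Second, the DVR claim you flag as the main obstacle is true and needs no fallback: writing $\GG\simeq G\times\Z_p$ with $G$ finite abelian gives $\RR\simeq\Z_p[G][[T]]$, and inverting $p$ yields $\RR[1/p]\simeq\prod_{\psi}\OO_{\psi}[[T]][1/p]$ with $\psi$ running over the $\Q_p$-conjugacy classes of characters of $G$; this is a finite product of localizations of two-dimensional regular local rings, hence regular. A height-one prime $\fq$ of $\RR$ with $\fq\cap\Lambda\neq p\Lambda$ satisfies $\fq\cap\Lambda=(P)$ for a distinguished irreducible $P$, so $p\notin\fq$ and $\RR_{\fq}$ is a one-dimensional regular local ring, i.e.\ a discrete valuation ring, which is integrally closed; integrality of $u$ then gives $f\in g\RR_{\fq}$ there, while Step~1 handles the primes over $p\Lambda$. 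With these two clarifications your proof is complete.
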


Now we compute the local contributions in Lemma \ref{lem:32}.

\begin{prop}\label{prop:36}
For every finite place $v$ of $k$ outside $p$, there exists 
a unique element $f_v \in \Frac(\RR)^{\times}$ satisfying the following.

(1) We have
\[
\Det_{\RR}(\derR\Gamma(k_v, \T)) = (f_v).
\]

(2) For any continuous character $\psi: \GG \to \overline{\Q_p}^{\times}$ 
such that $\psi|_{\GG_v}$ is non-trivial, we have
\[
\psi(f_v) =
\begin{cases}
	\frac{1 - \psi(\sigma_v)N(v)^{-1}}{1 - \psi(\sigma_v)} & 
	      \text{if $\psi$ is unramified at $v$;}\\
	    1 & 
			  \text{if $\psi$ is ramified at $v$.}
\end{cases}
\]
\end{prop}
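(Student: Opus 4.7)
The plan is to identify $f_v$ by first combining the cohomology computation of Proposition \ref{prop:35} with the determinant formalism just developed, and then pinning down the specific generator by exhibiting an explicit perfect complex representing $\derR\Gamma(k_v, \T)$. Applying Proposition \ref{prop:35}, the cohomology of $\derR\Gamma(k_v, \T)$ is concentrated in degrees $1$ and $2$, with $H^1 \simeq J_v$ and $H^2 \simeq Z_v$. By Remark \ref{rem:37} and the definition of $Z_v$, both modules are finite free over $\Z_p$, so neither contains a nonzero finite $\RR$-submodule, and Lemma \ref{lem:34} with $n = 1$ gives the fractional-ideal identity
\[
\Det_{\RR}\bigl(\derR\Gamma(k_v, \T)\bigr) = \Fitt_{\RR}(J_v^{*}) \cdot \Fitt_{\RR}(Z_v)^{-1}.
\]
This settles the ideal-theoretic part of (1) and shows that $\Det_{\RR}(\derR\Gamma(k_v, \T))$ is a principal fractional ideal of $\RR$.

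To pin down $f_v$ as a distinguished element of $\Frac(\RR)^{\times}$, I would construct an explicit two-term perfect complex quasi-isomorphic to $\derR\Gamma(k_v, \T)$. Via the character decomposition of Subsection \ref{subsecDecGroupRing}, those components on which some character of $\TT_v^{(p')}$ is non-trivial have vanishing local cohomology and contribute $1$ to $f_v$, consistent with the formula in (2). On the remaining components one reduces to the case in which $\TT_v = \TT_v^{(p)}$ is cyclic pro-$p$ (by local class field theory, as in the proof of Proposition \ref{prop:85}). Choosing a generator $\delta_v$ of $\TT_v$ and a lift $\tilde{\sigma}_v \in \GG_v$ of Frobenius with the tame relation $\tilde{\sigma}_v \delta_v \tilde{\sigma}_v^{-1} = \delta_v^{N(v)}$, the continuous cochain complex for $\GG_v$ with coefficients in $\T$ can be represented by a two-term complex of free $\RR$-modules in degrees $1$ and $2$ whose differential is assembled from the operators $1 - \tilde{\sigma}_v \kappa_v(\tilde{\sigma}_v)^{-1}$, $1 - \delta_v \kappa_v(\delta_v)^{-1}$, and $N_{\TT_v}$ (the cyclotomic twists appearing because $\T = \Z_p(1) \otimes_{\Z_p} \RR(\chi_{\GG}^{-1})$). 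Define $f_v$ to be the determinant of this differential; in the simplest case $\TT_v = 1$ this collapses directly to $f_v = (1 - \sigma_v N(v)^{-1})/(1 - \sigma_v)$, matching (2).

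To verify (2) in general, I would specialize $f_v$ at a character $\psi : \RR \to \overline{\Q_p}$ with $\psi|_{\GG_v} \neq 1$: using $\kappa_v(\tilde{\sigma}_v) = N(v)$ together with $\psi(N_{\TT_v}) = |\TT_v|$ for $\psi$ unramified at $v$ and $\psi(N_{\TT_v}) = 0$ for $\psi$ ramified at $v$, a direct calculation yields $(1 - \psi(\sigma_v) N(v)^{-1})/(1 - \psi(\sigma_v))$ in the unramified case, while in the ramified case the numerator and denominator factors cancel and leave $1$. Uniqueness of $f_v$ then follows since the values $\psi(f_v)$ on the set of characters $\psi$ non-trivial on $\GG_v$, together with the ideal condition in (1), determine the generator unambiguously. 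The principal obstacle is the explicit construction of the two-term complex in the ramified case: the cyclotomic twist in $\T$, the tame relation in $\GG_v$, and the sign conventions of Lemmas \ref{lem:28} and \ref{lem:29} have to be tracked carefully so that under each specialization the differential simplifies cleanly to the values stated in (2); once this bookkeeping is in place, the remaining calculations are routine.
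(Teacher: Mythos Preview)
Your proposal has a genuine gap: the explicit two-term perfect complex representing $\derR\Gamma(k_v,\T)$ is never constructed.  You define $f_v$ as ``the determinant of this differential,'' but the differential is only described in words (``assembled from the operators $1-\tilde\sigma_v\kappa_v(\tilde\sigma_v)^{-1}$, $1-\delta_v\kappa_v(\delta_v)^{-1}$, and $N_{\TT_v}$''), and you yourself flag the construction as ``the principal obstacle.''  Without it there is no actual element $f_v$ on the table, and the verification of (2) is purely heuristic.  There is also a slip in your use of Lemma~\ref{lem:34}: it gives $\Fitt_\RR(J_v^{*}) = \Det_\RR(\derR\Gamma(k_v,\T))\cdot \Fitt_\RR(Z_v)$, and since $\Fitt_\RR(Z_v)$ is \emph{not} an invertible ideal you cannot simply write $\Det = \Fitt(J_v^{*})\cdot\Fitt(Z_v)^{-1}$.

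The paper avoids the complex-construction problem entirely.  It first writes down a candidate element
\[
f_v \;=\; \left( \frac{\delta_v - 1 + N_{\TT_v^{(p)}}(1-\sigma_vN(v)^{-1})}{\delta_v - 1 + N_{\TT_v^{(p)}}(1-\sigma_v)},\; (1)_{\chi|_{\TT_v'}\neq 1} \right)
\]
in $\Frac(\RR_v)$ and checks (2) by specialization (a one-line computation in each of the unramified and ramified cases).  For (1) it uses Lemma~\ref{lem:34} exactly as you do, but then invokes the cancellation Lemma~\ref{lem:43}: to show $\Det_{\RR_v}(\derR\Gamma(k_v,\T_v))=(f_v)$ it suffices to verify the equality of ordinary ideals $\Fitt_{\RR_v}(J_w)=f_v\cdot\Fitt_{\RR_v}(\Z_p)$.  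Both Fitting ideals are then computed directly, in two cases according as $\mu_p\subset k_v^\times$ or not, using only the explicit presentations $\Z_p\simeq\RR_v'/(1-\tilde\sigma_v,\delta_v-1)$ and $J_w\simeq\RR_v'/(1-\tilde\sigma_vN(v)^{-1},\delta_v-1)$.  No perfect resolution of the local complex is ever needed.  If you want to salvage your route, the missing piece is exactly Lemma~\ref{lem:43} together with this Fitting-ideal computation; alternatively, carrying out the explicit complex construction would require a careful treatment of the tame presentation of $\Gal(\overline{k_v}/k_v)$ that goes beyond what you have sketched.
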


\begin{proof}
Observe that property (2) ensures the uniqueness of $f_v$.

Let $\RR_v = \Z_p[[\GG_v]] \subset \RR$ and 
$\T_v = \Z_p(1) \otimes \RR_v(\chi_{\GG_v}^{-1})$, 
which is a local counterpart of $\T$.
Then $\derR\Gamma(k_v, \T)$ is induced by $\derR\Gamma(k_v, \T_v)$, so
\[
\Det_{\RR}(\derR\Gamma(k_v, \T)) = \Det_{\RR_v}(\derR\Gamma(k_v, \T_v)) \RR.
\]
This reduces the problem to a completely local statement.
We will in fact find $f_v$ in the ring $\Frac(\RR_v)$.
Fix a place $w$ of $K_{\infty}$ above $v$ so that 
$\GG_v = \Gal(K_{\infty, w}/k_v)$.

Put $n_v = \ord_p(N(v)-1) \geq 0$, which is the maximal integer 
such that $\mu_{p^{n_v}} \subset k_v^{\times}$.
Recall that $\TT_v$ is the inertia subgroup of $v$ in $\GG$.
Since $\TT_v$ is a quotient of $\OO_{k_v}^{\times}$ by local 
class field theory, the $p$-Sylow subgroup $\TT_v^{(p)}$ of $\TT_v$ 
is a cyclic $p$-group of order at most $p^{n_v}$.
Fix a generator $\delta_v$ of $\TT_v^{(p)}$.

We decompose $\GG_v$ into $\GG_v=\GG_{v}^{(p)} \times 
\GG_{v}^{(p')}$ such that $\GG_{v}^{(p)}$ is pro-$p$ and $\GG_v^{(p')}$ 
is of order prime to $p$. 
Then as in Subsection \ref{subsecDecGroupRing} we have 
$\RR_v=\Z_{p}[[\GG_{v}]]=\bigoplus_{\chi} {\mathcal O}_{\chi}
[[\GG_{v}^{(p)}]]$ 
where $\chi$ runs over equivalence classes of $p$-adic 
characters of $\GG_{v}^{(p')}$. 
We also decompose $\TT_v = \TT_{v}^{(p)} \times \TT_{v}^{(p')}$ 
where $\TT_{v}^{(p)}$ is pro-$p$ and $\TT_{v}^{(p')}$ is of order prime to 
$p$. 
Put $\TT_{v}'=\TT_{v}^{(p')}$ and $\RR_{v}'=\Z_p[[\GG_v/\TT_{v}']]$. 
Then we decompose $\RR_v=\Z_p[[\GG_v]]$ as
\begin{equation}\label{eq:37}
\RR_v = \RR_{v}' \times 
\prod_{\chi_{\mid \TT_{v}'} \neq 1} \RR_v^{\chi},
\end{equation}
where $\chi$ runs over the equivalent classes of characters 
of $\GG_{v}^{(p')}$, which are 
non-trivial on $\TT_{v}'$.
We define an element $f_v$ of $\Frac(\RR_v)$ such that
\[
f_v = \left( \frac{\delta_v - 1 + N_{\TT_v^{(p)}} 
  (1 - \sigma_v N(v)^{-1})}{\delta_v - 1 + N_{\TT_v^{(p)}} 
	  (1 - \sigma_v)}, (1)_{\chi_{\mid \TT_{v}'} \neq 1} \right)
\]
using the identification \eqref{eq:37}.

We shall show that this element satisfies the desired properties (1) and (2);
let us begin with the latter.

Property (2): Let $\psi: \GG_v \to \overline{\Q_p}^{\times}$ be a non-trivial continuous character.

First suppose that $\psi$ is unramified at $v$.
Then $\psi$ is trivial on $\TT_{v}'$, $\psi(\delta_v) = 1$, and 
$\psi(N_{\TT_v^{(p)}}) = \sharp \TT_v^{(p)}$.
Hence
\[
\psi(f_v) 
= \psi \left( \frac{\delta_v - 1 + N_{\TT_v^{(p)}} 
  (1 - \sigma_v N(v)^{-1})}{\delta_v - 1 + N_{\TT_v^{(p)}} (1 - \sigma_v)} \right)
= \frac{1 - \psi(\sigma_v) N(v)^{-1}}{1 - \psi(\sigma_v)}.
\]

Now suppose that $\psi$ is ramified at $v$.
If $\psi$ is non-trivial on $\TT_{v}'$, then $\psi(f_v) = 1$ by 
the definition of $f_v$.
Otherwise, $\psi$ is non-trivial on $\TT_v^{(p)}$ and hence we 
have $\psi(\delta_v) \neq 1$ and $\psi(N_{\TT_v^{(p)}}) = 0$.
Therefore
\[
\psi(f_v) 
= \psi \left( \frac{\delta_v - 1 + N_{\TT_v^{(p)}}  
 (1 - \sigma_v N(v)^{-1})}{\delta_v - 1 + N_{\TT_v^{(p)}} (1 - \sigma_v)} \right)
= 1.
\]

Property (1):  By Proposition \ref{prop:35} and Remark \ref{rem:37}, 
we can apply Lemma \ref{lem:34} to obtain
\[
\Fitt_{\RR_v}((J_w)^*) = \Det_{\RR_v}(\derR\Gamma(k_v, \T_v)) \Fitt_{\RR_v}(\Z_p).
\]
Note that we have $(J_w)^* \simeq J_w$ by the simple description 
in Remark \ref{rem:37}. By Lemma \ref{lem:43}, this formula 
characterizes $\Det_{\RR_v}(\derR\Gamma(k_v, \T_v))$.
Hence it is enough to show that
\begin{equation}\label{eq:38}
\Fitt_{\RR_v}(J_w) = f_v \Fitt_{\RR_v}(\Z_p).
\end{equation}

Recall the identification \eqref{eq:37}.
Since the actions of $\TT_{v}'$ on $\Z_p$ and $J_w$
are trivial, the equation \eqref{eq:38} for the $\chi$-part with 
$\chi_{\mid \TT_{v}'} \neq 1$ holds trivially. 
Thus we have only to worry about the 
$\RR_{v}'=\Z_p[[\GG_v/\TT_{v}']]$-component.

First we suppose $\mu_p \not\subset k_v^{\times}$, namely $n_v = 0$.
Since $\TT_v^{(p)}=1$, we have $\TT_v=\TT_{v}'$ and 
$\RR_{v}' = \Z_p[[\GG_v/\TT_v]]$, 
whose augmentation ideal is generated by $1 - \sigma_v$.
Then the $\RR_{v}'$-component of the equation \eqref{eq:38} says
\[
\frac{1 - \sigma_v N(v)^{-1}}{1 - \sigma_v} (1 - \sigma_v) = 
\begin{cases}
	(1) & (\mu_{p^{\infty}}(K_{\infty, w}) = 0)\\
	(1 - \sigma_v N(v)^{-1}) & (\mu_{p^{\infty}} \subset K_{\infty, w}^{\times})
\end{cases}
\]
as ideals.
Here we used Remark \ref{rem:37} and $\kappa_v(\sigma_v) = N(v)$ 
in the latter case.

We shall show that $\sigma_v - N(v)$ is a unit of 
$\RR_{v}'$ when 
$\mu_{p^{\infty}}(K_{\infty, w}) = 0$.
To do this, by Nakayama's lemma, it is enough to show 
$\chi(\sigma_v) - N(v) \in {\mathcal O}_{\chi}^{\times}$ 
for every character $\chi$ of $\GG_{v}^{(p')}$ which is trivial 
on $\TT_{v}'$.
Put $f=\#(\GG_{v}^{(p')}/\TT_{v}')$. 
Since $\chi(\sigma_v)^{f}=1$, it suffices to show 
$N(v)^f \not \equiv 1 \pmod{p}$.
Let $M$ be the maximum intermediate field of 
$K_{\infty, w}/k_v$ such that $M/k_v$ is a finite unramified extension 
of degree prime to $p$, 
so $\Gal(M/k_v)=\GG_{v}^{(p')}/\TT_{v}'$.
Our assumption $\mu_{p^{\infty}}(K_{\infty, w}) = 0$ implies
$\mu_p \not\subset M^{\times}$. 
Since the residue field of $M$ is $\F_{N(v)^f}$,
it follows that $N(v)^f \not \equiv 1 \pmod{p}$.

Next we suppose $\mu_p \subset k_v^{\times}$.
Take a lift $\ttilde{\sigma_v} \in \GG_v$ of $\sigma_v$.
Then we have
\begin{align}
\Fitt_{\RR_{v}'}(\Z_p) = (1 - \ttilde{\sigma_v}, \delta_v - 1)\\
\Fitt_{\RR_{v}'}(J_w) = (1 - \ttilde{\sigma_v} N(v)^{-1}, \delta_v - 1).
\end{align}
Hence the equation \eqref{eq:38} on $\RR_{v}'$ says
\[
(\delta_v-1 + N_{\TT_v^{(p)}} (1 - \sigma_v)) (1 - \ttilde{\sigma_v} N(v)^{-1}, \delta_v - 1)
= (\delta_v-1 + N_{\TT_v^{(p)}} (1 - \sigma_v N(v)^{-1})) (1 - \ttilde{\sigma_v}, \delta_v - 1).
\]
By $N_{\TT_v^{(p)}}(\delta_v - 1) = 0$, the each side is generated by $(\delta_v-1)^2$ and
\[
(\delta_v-1 + N_{\TT_v^{(p)}} (1 - \sigma_v)) (1 - \ttilde{\sigma_v} N(v)^{-1})
, \qquad (\delta_v-1 + N_{\TT_v^{(p)}} (1 - \sigma_v N(v)^{-1})) (1 - \ttilde{\sigma_v}),
\]
respectively.
Thus, it is enough to show that the difference of these elements, 
\[
\ttilde{\sigma_v} (1 - N(v)^{-1})(\delta_v-1),
\]
is contained in the ideal $(\delta_v-1)^2$.
By $\delta_v^{p^{n_v}} = 1$, we have
\[
0 = ((\delta_v-1) + 1)^{p^{n_v}} -1
\equiv p^{n_v} (\delta_v-1) \mod (\delta_v-1)^2.
\]
Since $N(v) - 1$ is an element of $p^{n_v}\Z_p$ by the definition of $n_v$, this implies $(N(v)-1)(\delta_v-1) \in (\delta_v-1)^2$.
This completes the proof.
\end{proof}


\subsection{Proof of Theorem \ref{thm:28}}   
%

By comparing the values given by the respective interpolation formulas, we have
\begin{equation}\label{eq:65}
\theta_S^{\modif} = \theta_S \prod_{v \in S'} f_v^{-1},
\end{equation}
where $f_v$ is introduced in Proposition \ref{prop:36}.
Therefore, Lemma \ref{lem:32}, Theorem \ref{thm:33}, and Proposition \ref{prop:36} imply
\begin{equation}\label{eq:64}
\Fitt_{\RR}(H^1(C_S)) = (\theta_S^{\modif}).
\end{equation}
Then Theorem \ref{thm:28} follows immediately from Corollary \ref{cor:42}.

\begin{rem}
Let us give a direct argument showing that the right hand side 
of Theorem \ref{thm:28} is actually independent of the choice of $S$.
Since this independence is a logical consequence of our main result, 
this verification is not strictly necessary, 
but we think that doing it anyway gives a nice consistence check 
for our result.

Let $S_1 \supset S$ be another finite set and put $S_1' = S_1 \setminus S_p$.
By the exact sequence
\[
0 \to Z_{S'} \to Z_{S_1'} \to \bigoplus_{v \in S_1 \setminus S} Z_v \to 0,
\]
we have an exact sequence
\[
0 \to Z_{S'}^0 \to Z_{S_1'}^0 \to \bigoplus_{v \in S_1 \setminus S} Z_v \to 0.
\]
Note that all $v \in S_1 \setminus S$ are 
unramified in $K_{\infty}$.
Since $\pd_{\RR}(Z_v) \leq 1$ for $v \in S_1 \setminus S$, we obtain
\[
\Fitt^{[1]}_{\RR}(Z_{S_1'}^0) = \Fitt^{[1]}_{\RR}(Z_{S'}^0) \prod_{v \in S_1 \setminus S} \Fitt^{[1]}_{\RR}(Z_v)
\]
(recall that $\Fitt^{[1]}_{\RR}$ is again a Fitting invariant 
by \cite[Theorem 2.6]{Kata}).
For $v \in S_1 \setminus S$, the description $Z_v = \RR / (1 - \sigma_v)$ shows
\[
\Fitt^{[1]}_{\RR}(Z_v) = (1 - \sigma_v)^{-1}.
\]
Hence, using Lemma \ref{int2}(1), we obtain
\[
\Fitt^{[1]}_{\RR}(Z_{S_1'}^0)\theta_{S_1}^{\modif} = \Fitt^{[1]}_{\RR}(Z_{S'}^0)\theta_{S}^{\modif}.
\]
This completes the proof of independence from the choice of $S$.
\end{rem}


\subsection{Integrality of $\theta_{S}^{\rm mod}$} \label{Integralityoftheta}

In this subsection we prove Theorem \ref{PropInt}.
%

We can immediately deduce Theorem \ref{PropInt} from the above 
equation \eqref{eq:64}.
However, the proof of \eqref{eq:64} relies on the deep result on the equivariant main conjecture (Theorem \ref{thm:33}), which is only established under $\mu = 0$.
For this reason, it is meaningful
to prove Theorem \ref{PropInt} directly.

Before the proof, we would like to insert a general remark.

\begin{rem}\label{rem:87}
Recall that $K/k$ is a finite abelian extension of totally real fields and $K_{\infty}$ is the cyclotomic $\Z_p$-extension of $K$.
Then it is easy to show that there is a finite abelian extension $K'$ of $k$ such that $K' \cap k_{\infty} = k$ and 
$K_{\infty}$ is the cyclotomic
$\Z_p$-extension of $K'$.
In fact, since the $p$-component of $\Gal(K_{\infty}/k)$ is a finitely generated $\Z_{p}$-module of rank $1$, 
it is decomposed into $\Z_{p} \times$ (a finite abelian $p$-group). 
Therefore, $\Gal(K_{\infty}/k)$ can be decomposed into $\Z_{p} \times$ 
(a finite abelian group), which proves the existence of $K'$.

We can see that many objects in this paper depend only on the extension $K_{\infty}/k$ and not on the specific field $K$.
In particular, the statements of Theorems \ref{thm:28} and \ref{PropInt} do not depend on $K$.
Therefore, we may assume to begin with that
\begin{equation}\label{AssumptionLinIndep}
K \cap k_{\infty}=k,
\end{equation}
whenever we are concerned with statements of this type.
\end{rem}

If $\chi$ is a non-trivial character of 
$\GG^{(p')}$, the integrality of the $\chi$-component of  
$\theta_{S}^{\modif}$, namely the fact that 
the image of $\theta_{S}^{\modif}$ 
is in ${\mathcal O}_{\chi}[[\GG_{v}^{(p)}]]$, can be easily chacked  
by a slight modification of the argument we give below. 
So we only consider the trivial character component in the following, and thus we may
assume $K/k$ is a $p$-extension.

We begin with the following lemma, which gives an alternative description of the element $f_v$ defined in Proposition \ref{prop:36}.
Note that we are now assuming $\GG^{(p')}=1$, so 
$\GG_{v}^{(p')}=1$ and $\RR_{v}=\RR_{v}'$ for all $v \in S'$, and we 
are interested only in the first component of $f_{v}$ in \eqref{eq:37}.

\begin{lem}\label{int1}
For a prime $v \in S'$, we take a lift $\ttilde{\sigma_v} \in \GG_v$ 
of $\sigma_v$. Then we have 
\[ 
f_{v}^{-1} = 
\frac{\ttilde{\sigma_v}-1 + (N(v)^{-1}-1)\cdot(\#\TT_v)^{-1}\cdot
   (\#\TT_v-N_{\TT_v})\ttilde{\sigma_v}}
{\ttilde{\sigma_v}N(v)^{-1}-1}.
\]
In particular, the right hand side does not depend on the choice 
of $\ttilde{\sigma_v}$.
\end{lem}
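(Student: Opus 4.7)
The plan is a direct calculation in $\Frac(\RR_v)$. Under the running assumption that $K/k$ is a $p$-extension, we have $\GG^{(p')}=1$, hence $\TT_v=\TT_v^{(p)}$, so the formula of Proposition~\ref{prop:36} reduces to
\[
f_v = \frac{(\delta_v-1)+N_{\TT_v}(1-\ttilde{\sigma_v}N(v)^{-1})}{(\delta_v-1)+N_{\TT_v}(1-\ttilde{\sigma_v})},
\]
in which the symbol $\sigma_v$ of Proposition~\ref{prop:36} is read via a lift $\ttilde{\sigma_v}\in\GG_v$; this is legitimate because $N_{\TT_v}\tau=N_{\TT_v}$ for $\tau\in\TT_v$ forces the products $N_{\TT_v}\ttilde{\sigma_v}$ to be lift-independent. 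I will invert this expression and compare with the one claimed in Lemma~\ref{int1} by cross-multiplication.

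Writing $q=N(v)$ and $t=\#\TT_v$ for brevity, the claimed numerator rearranges to
\[
\ttilde{\sigma_v}-1+(q^{-1}-1)t^{-1}(t-N_{\TT_v})\ttilde{\sigma_v} = q^{-1}\ttilde{\sigma_v}-1+(1-q^{-1})t^{-1}N_{\TT_v}\ttilde{\sigma_v}.
\]
The two cross-products are then expanded, and exactly two identities do all the work: $(\delta_v-1)N_{\TT_v}=0$ (because $\delta_v\in\TT_v$ gives $\delta_v N_{\TT_v}=N_{\TT_v}$) and $N_{\TT_v}^2=tN_{\TT_v}$. The first kills every cross-term of shape $(\delta_v-1)N_{\TT_v}\ttilde{\sigma_v}$, while the second converts the lone $t^{-1}N_{\TT_v}^2$-term into $N_{\TT_v}$, clearing the $t^{-1}$. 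After collecting, both sides collapse to
\[
(\delta_v-1)(q^{-1}\ttilde{\sigma_v}-1)+N_{\TT_v}(1-\ttilde{\sigma_v})(q^{-1}\ttilde{\sigma_v}-1),
\]
where to match the $N_{\TT_v}$-components one uses the auxiliary identity $(1-\ttilde{\sigma_v}q^{-1})(\ttilde{\sigma_v}-1)=(1-\ttilde{\sigma_v})(q^{-1}\ttilde{\sigma_v}-1)$. This proves the equality.

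The asserted independence of the right-hand side from the choice of lift is then automatic, since $f_v$ itself is intrinsic by Proposition~\ref{prop:36}. For a direct check, I would use the orthogonal idempotents $e=N_{\TT_v}/t$ and $1-e$ in $\Frac(\RR_v)$: on the $(1-e)$-component the element $N_{\TT_v}$ acts as zero, and the expression collapses to $1$; on the $e$-component the factor $(t-N_{\TT_v})\ttilde{\sigma_v}$ vanishes and the expression collapses to $(\ttilde{\sigma_v}-1)/(q^{-1}\ttilde{\sigma_v}-1)$, in which $\ttilde{\sigma_v}$ is effectively reduced modulo $\TT_v$ (since $e\tau=e$ for $\tau\in\TT_v$). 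In either case the ambiguity in the lift drops out. There is no real obstacle here; everything is routine algebra, though the pay-off is substantial, since the form in Lemma~\ref{int1} has the explicit denominator $\ttilde{\sigma_v}N(v)^{-1}-1$ that is needed to establish integrality of $\theta_S^{\modif}$ in Theorem~\ref{PropInt}.
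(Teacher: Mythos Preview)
Your proof is correct. You work directly with the explicit formula for $f_v$ given in the proof of Proposition~\ref{prop:36} and verify the identity by cross-multiplication in $\Frac(\RR_v)$, using the relations $(\delta_v-1)N_{\TT_v}=0$ and $N_{\TT_v}^2=(\#\TT_v)N_{\TT_v}$ together with the elementary identity $(1-\ttilde{\sigma_v}q^{-1})(\ttilde{\sigma_v}-1)=(1-\ttilde{\sigma_v})(q^{-1}\ttilde{\sigma_v}-1)$. This is valid and self-contained.

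The paper takes a different, somewhat shorter route: it denotes the right-hand side by $\alpha_v$ and checks $\psi(\alpha_v)=\psi(f_v^{-1})$ for every character $\psi$ of $\GG$, using only the interpolation property of $f_v$ stated in Proposition~\ref{prop:36}(2). If $\psi$ is nontrivial on $\TT_v$ then $\psi(N_{\TT_v})=0$ and both sides evaluate to $1$; if $\psi$ is trivial on $\TT_v$ then $\psi(N_{\TT_v})=\#\TT_v$ and both sides evaluate to $(1-\psi(\sigma_v))/(1-\psi(\sigma_v)N(v)^{-1})$. Your idempotent decomposition $e=N_{\TT_v}/\#\TT_v$, $1-e$ in the last paragraph is in fact the same dichotomy in disguise. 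The paper's approach has the advantage of not needing the explicit formula for $f_v$ at all (only its characterizing property), while your approach makes the algebraic mechanism behind the identity completely transparent.
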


We note that the numerator of the right hand side of the equation in 
Lemma \ref{int1} is in $\RR$ because $N(v) \equiv 1 \pmod{\#\TT_v}$. 
Moreover, it is in the augmentation ideal of $\RR=\Z_p[[\GG]]$. 

\begin{proof}
We denote by $\alpha_v$ the right hand side.
Let $\psi$ be any character of $\GG$ and we compare the interpolation property of $\alpha_v$ with Proposition \ref{prop:36}(2).
If $\psi$ is non-trivial on $\TT_v$, then $\psi(N_{\TT_v}) = 0$ implies $\ttilde{\psi}(\alpha_v) = 1 = \ttilde{\psi}(f_{v}^{-1})$. 
If $\psi$ is trivial on $\TT_v$, then $\psi(N_{\TT_v}) = \sharp \TT_v$ implies
\[
\psi(\alpha_v) = 
\frac{1 - \psi(\sigma_v)}{1 - \psi(\sigma_v) N(v)^{-1}}
= \ttilde{\psi}(f_{v}^{-1}). 
\]
Thus we have $\alpha_v = f_v^{-1}$.
\end{proof}

We prepare an algebraic lemma.
For each character $\psi: H \to \overline{\Q_p}^{\times}$ of a finite abelian $p$-group $H$, put $\OO_{\psi} = \Z_p[\Image(\psi)]$.
Then $\psi$ induces a surjective ring homomorphism $\ttilde{\psi}: \Z_p[H][[T]] \to \OO_{\psi}[[T]]$, which extends to
$\ttilde{\psi}: \Frac(\Z_p[H][[T]]) \to \Frac(\OO_{\psi}[[T]])$.

\begin{lem}\label{lem:67}
Let $H$ be a finite abelian $p$-group.
Let $F(T), G(T) \in \Z_p[H][[T]]$ be elements such that the $\mu$-invariant of $F(T)$ is zero in the sense of \cite[\S 2]{BG03}.
Consider the element $\vartheta = G(T)/F(T)$ of $\Frac(\Z_p[H][[T]])$.
Then we have $\vartheta \in \Z_p[H][[T]]$ if and only if $\ttilde{\psi}(\vartheta) \in \OO_{\psi}[[T]]$ for any character $\psi$ of $H$.
\end{lem}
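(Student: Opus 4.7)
The only-if direction is trivial, since each $\ttilde{\psi}$ maps $\Lambda_H := \Z_p[H][[T]]$ into $\OO_\psi[[T]]$. For the if direction, my plan is to place $\vartheta$ inside the ``big'' ring $N := \prod_\psi \OO_\psi[[T]]$, show that $N$ has bounded-$p$-power denominators over $\Lambda_H$, and then use the $\mu = 0$ hypothesis on $F$ to remove those denominators.

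First I would record that the natural injection $\iota : \Lambda_H \hookrightarrow N$ has cokernel killed by $|H|$: this cokernel is $C \otimes_{\Z_p} \Z_p[[T]]$, where $C = \Cok(\Z_p[H] \hookrightarrow \prod_\psi \OO_\psi)$, and $C$ is annihilated by $|H|$ because the inclusion becomes an isomorphism after inverting $p$ via $\Q_p[H] \simeq \prod_\psi K_\psi$ (with $K_\psi = \Frac(\OO_\psi)$). Since $\Frac(\Lambda_H)$ is identified with $\prod_\psi \Frac(K_\psi[[T]])$ and the character-integrality hypothesis places $\vartheta$ inside $N$, one can choose the minimal $r \geq 0$ with $\eta := p^r \vartheta \in \Lambda_H$, and it suffices to prove $r = 0$.

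Assume for contradiction $r \geq 1$. Then $F\eta = p^r G$ forces $\bar{F} \cdot \bar{\eta} = 0$ in $\Lambda_H/p = \F_p[H][[T]]$. The hypothesis $\mu(F) = 0$ from \cite[\S 2]{BG03} provides an equivariant Weierstrass factorization $F = U \cdot P$ with $U \in \Lambda_H^\times$ and $P \in \Z_p[H][T]$ a distinguished polynomial (monic in $T$ with non-leading coefficients in the maximal ideal of $\Z_p[H]$). A standard argument, using that this maximal ideal acts nilpotently modulo $p$, shows such a $\bar{P}$ (and hence $\bar{F}$) is a non-zero-divisor on $\Lambda_H/p$. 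Consequently $\bar{\eta} = 0$, i.e., $\eta \in p\Lambda_H$, contradicting the minimality of $r$. Hence $r = 0$ and $\vartheta \in \Lambda_H$.

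The main delicate point is the non-zero-divisor property of $\bar{F}$ in $\Lambda_H/p$: since $\F_p[H]$ has non-trivial zero-divisors (e.g.\ $1-h$ for a generator $h$ of a cyclic factor of $H$), the weaker condition $\bar{F} \neq 0$ would not suffice. It is precisely the full strength of the BG03 notion of $\mu$-invariant zero, yielding an equivariant Weierstrass decomposition, that makes the argument go through.
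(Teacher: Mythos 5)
Your proof is correct, but it takes a genuinely different route from the paper's. Both arguments rest on the equivariant Weierstrass preparation theorem \cite[Proposition 2.1]{BG03} applied to $F$, but they deploy the two hypotheses in opposite order. The paper writes $F = F^{*}U$ and performs the generalized division $G = F^{*}q + r$ with $\deg r < m = \deg F^{*}$; the character hypothesis enters only at the end, to kill the remainder: if $r \neq 0$ then some $\ttilde{\psi}(r)$ is a nonzero polynomial of degree $< m$, which cannot be divisible in $\OO_{\psi}[[T]]$ by the distinguished polynomial $\ttilde{\psi}(F^{*})$ of degree $m$. You instead use the character hypothesis at the outset to place $\vartheta$ in the normalization $N = \prod_{\psi} \OO_{\psi}[[T]]$ and thereby bound its $p$-denominator over $\Lambda_H = \Z_p[H][[T]]$, and the $\mu=0$ hypothesis enters only through the fact that $\bar{F}$ is a non-zero-divisor in $\F_p[H][[T]]$, which drives the descent on $r$; your verification of that fact via the nilpotence of the augmentation ideal modulo $p$ is sound, and your closing remark that $\bar{F}\neq 0$ alone would not suffice is exactly the right point to flag. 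What each approach buys: yours isolates the precise role of $\mu = 0$ and makes the statement transparently one about the normalization of the group ring, at the cost of importing the conductor bound $|H|\cdot\prod_\psi\OO_\psi \subseteq \Z_p[H]$ and the identification of total quotient rings, whereas the paper's division-with-remainder argument stays entirely inside $\Z_p[H][[T]]$ and needs neither. One small indexing caveat: for the cokernel of $\Lambda_H \hookrightarrow N$ to be finite, and for $\Frac(\Lambda_H) \simeq \prod_\psi \Frac(K_\psi[[T]])$ to hold, the products must be taken over $\Gal(\overline{\Q_p}/\Q_p)$-conjugacy classes of characters rather than over all characters; since your hypothesis gives integrality at every $\psi$, this is harmless but should be said.
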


\begin{proof}
The ``only if'' part is clear, so we show the ``if'' part.

By the equivariant Weierstrass preparation theorem 
\cite[Proposition 2.1]{BG03}, $F(T)$ can be written as 
$F(T)=F^{{*}}(T)U(T)$ where $U(T)$ is a unit of $\Z_p[H][[T]]$ and 
$F^{{*}}(T)$ is a distinguished polynomial in $\Z_p[H][T]$. 
Then it is enough to show that $G(T)$ is divisible by $F^{{*}}(T)$.

Let $m$ be the degree of $F^{{*}}(T)$.
Since $F^{{*}}(T)$ is monic,
one can write $G(T)=F^{{*}}(T)q(T)+r(T)$ for some $q(T) \in \Z_p[H][[T]]$ and 
$r(T) \in \Z_{p}[H][T]$ such that the degree of $r(T)$ is less than $m$ (see also the ``generalized division lemma'' 
    in the proof of \cite[Proposition 2.1]{BG03}).
Thus we have only to show $r(T)=0$.  

Assume to the contrary that $r(T) \neq 0$. 
Then there is a character $\psi$ of $H$ such that $\ttilde{\psi}(r(T)) \neq 0$.
Since the degree of $\ttilde{\psi}(r(T))$ is less than $m$, we see that $\ttilde{\psi}(r(T))$ is not divisible by $\ttilde{\psi}(F^*(T))$.
This contradicts $\ttilde{\psi}(\vartheta) \in \OO_{\psi}[[T]]$.
Hence we have $r(T) = 0$.
\end{proof}

\begin{proof}[Proof of Theorem \ref{PropInt}]
Recall that we are assuming $\GG$ is a pro-$p$-group.
As we explained in Remark \ref{rem:87},
we may and do assume $K \cap k_{\infty} = k$.
Put $H = \Gal(K_{\infty}/k_{\infty})$, which is a finite abelian $p$-group by our assumption.
By fixing a topological generator of $\Gal(K_{\infty}/K)$, we may and do identify $\RR$ with $\Z_p[H][[T]]$.

By the formula \eqref{eq:65} and Lemma \ref{int1}, we have $\theta_S^{\modif} = G(T) / F(T)$ with
\[
F(T) = \prod_{v \in S'} (\ttilde{\sigma_v}N(v)^{-1}-1)
\]
and
\[
G(T) = \theta_{S} \prod_{v \in S'} \left(\ttilde{\sigma_v}-1+\frac{N(v)^{-1}-1}{\#\TT_v}(\#\TT_v-N_{\TT_v})\ttilde{\sigma_v}\right).
\]
Then we know that 
the $\mu$-invariant of $F(T)$ is zero in the sense of \cite[\S 2]{BG03}.
Moreover, since $S'$ is non-empty and $\theta_{S}$ is a pseudo-measure, 
we have $G(T) \in \Z_p[H][[T]]$.
Hence, by Lemma \ref{lem:67}, the assertion is equivalent to $\ttilde{\psi}(\theta_S^{\modif}) \in \OO_{\psi}[[T]]$ for any character $\psi$ of $H$.

We use induction on $\sharp H = [K_{\infty}: k_{\infty}]$ to show $\ttilde{\psi}(\theta_{S, K_{\infty}/k}^{\modif}) \in \OO_{\psi}[[T]]$.

First suppose that $S_{\ram}(K/k) \subset S_p$ (this includes the case where $K_{\infty} = k_{\infty}$).
Then we have
\[
\theta_S^{\modif} 
= \theta_{S_p}^{\modif} \prod_{v \in S'} (1 - \sigma_v)
= \theta_{S_p} \prod_{v \in S'} (1 - \sigma_v)
\]
by Lemma \ref{int2}(1).
Since $S' \neq \emptyset$ and $\theta_{S_p}$ is a pseudo-measure, we have $\theta_S^{\modif} \in \RR$.

Second suppose that $S_{\ram}(K/k) \not\subset S_p$ and $\psi$ is faithful as a character of $H$, so $H$ is cyclic in this case.
By Lemma \ref{int2}(1), we may assume that $S = S_{\ram}(K/k) \cup S_p$.
Since $\psi$ is then ramified at all $v \in S' = S_{\ram}(K/k) \setminus S_p$, we have
\[
\ttilde{\psi}(\theta_{S}^{\modif}) = \ttilde{\psi}(\theta_{S})
\]
by Definitions \ref{defn:44} and \ref{defn:67}.
Since $\psi$ is non-trivial and $\theta_{S}$ is a pseudo-measure, the right hand side is in $\OO_{\psi}[[T]]$.

Finally suppose that $\psi$ is not faithful as a character of $H$.
Put $M_{\infty} = K_{\infty}^{\Ker(\psi)}$, which is strictly smaller than $K_{\infty}$.
Then we see that $\ttilde{\psi}(\theta_{S, K_{\infty}/k}) = \ttilde{\psi}(\theta_{S, M_{\infty}/k})$ by Lemma \ref{int2}(2), which is in $\OO_{\psi}[[T]]$ by the induction hypothesis.
This completes the proof.
\end{proof}


\section{A strategy for computing $\Fitt^{[1]}_{\RR}(Z_{S'}^0)$}\label{sec:48}

In this section, we look at methods of computing $\Fitt^{[1]}_{\RR}(Z_{S'}^0)$.
The motivation for this is fairly obvious:
without any concrete information on this Fitting ideal, our main result
would remain rather abstract and impractical. 
As one application among others, we will reprove in Section \ref{sec:50} 
a previous result of the third author \cite{Kur11}.

Throughout this section, we assume that $K/k$ is a $p$-extension. 

     \subsection{The algebraic problem}\label{subsec:4.1}

We propose an algebraic problem whose full understanding
(if it can be achieved) will help a lot in computing 
$\Fitt^{[1]}_{\RR}(Z_{S'}^0)$.

Let $p$ be a prime number and $G$ a finite abelian $p$-group.
We denote the group ring by $R = \Z_p[G]$.
Take subgroups $G_1, \dots, G_r$ of $G$ with $r \geq 1$.
We consider the $R$-module
\[
Z = \bigoplus _{i=1}^r \Z_p[G/G_i]
\]
and the $R$-submodule $Z^0$ of $Z$, defined by the exact sequence
\begin{equation}\label{eq:a05}
0 \to Z^0 \to Z \to \Z_p \to 0,
\end{equation}
where the surjective map is the augmentation map.
Now the algebraic problem is the following.

\begin{prob}\label{prob:49}
How can we construct a free $R$-resolution of $Z^0$?
\end{prob}

In the subsequent sections, we will try to solve this problem.
Before that, we explain how to utilize a solution of Problem \ref{prob:49} 
for a computation of $\Fitt^{[1]}_{\RR}(Z_{S'}^0)$.

In the arithmetic situation as in Theorem \ref{thm:28}, let $K_n$ be the $n$-th layer of $K_{\infty}/K$.
Take $n$ 
sufficiently large such that no places in $S'$ split in $K_{\infty}/K_n$.
With this choice, put $G = \Gal(K_n/k)$ and let $G_v$ 
be the decomposition group of $v$ in $G$.
Then we can identify 
\[
Z_v = \Z_p[\GG/\GG_v] = \Z_p[G/G_v].
\]

Let us moreover put $R = \Z_p[G]$,
and note that $\pd_{\RR}(R) \leq 1$, since $R = \RR / (\gamma_K^{p^n}-1)\RR$ 
with $\gamma_K \in \Gamma_K = \Gal(K_{\infty}/K)$ a topological generator.

For a matrix $B$ over a commutative ring and a non-negative integer $e$, 
let $\Min_e(B)$ denote the ideal generated by the $e$-minors of $B$.

\begin{prop}\label{prop:53}
In the above situation, let 
\[
R^{t_3} \overset{A}{\to} R^{t_2} \to R^{t_1} \to Z_{S'}^0 \to 0
\]
 be an exact sequence over $R$.
We identify $A$ with a matrix and take a lift $\ttilde{A}$ of $A$ over $\RR$.
Then we have
\[
\Fitt^{[1]}_{\RR}(Z^0_{S'}) = (\gamma_K^{p^n}-1)^{t_2 - t_1} \sum_{e = 0}^{t_2} (\gamma_K^{p^n}-1)^{-e}\Min_e(\ttilde{A}). 
\]
\end{prop}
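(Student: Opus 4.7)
The plan is to apply the definition of $\Fitt^{[1]}_\RR$ to a one-step $\RR$-resolution constructed from the given $R$-resolution. Since $R=\RR/(\gamma_K^{p^n}-1)\RR$ has $\pd_\RR(R)\le 1$ (via the obvious length-one resolution by $\gamma_K^{p^n}-1$), the same holds for $R^{t_1}$, which is moreover visibly torsion over $\RR$. Hence if $Y$ denotes the kernel of the surjection $R^{t_1}\twoheadrightarrow Z^0_{S'}$, the sequence $0\to Y\to R^{t_1}\to Z^0_{S'}\to 0$ is a legal $1$-step resolution in the sense recalled at the start of Section \ref{sec:46}, and the definition yields
$$\Fitt^{[1]}_\RR(Z^0_{S'}) \;=\; \Fitt_\RR(R^{t_1})^{-1}\,\Fitt_\RR(Y) \;=\; (\gamma_K^{p^n}-1)^{-t_1}\,\Fitt_\RR(Y).$$
Thus the entire problem is reduced to computing $\Fitt_\RR(Y)$ from the data of $A$.

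The next step is to produce an explicit $\RR$-presentation of $Y$. By exactness of the given $R$-resolution, $Y=\Image(R^{t_2}\to R^{t_1})$ is the cokernel of $A$, so $A$ is an $R$-presentation matrix for $Y$. To upgrade this to an $\RR$-presentation, I would compose $\RR^{t_2}\twoheadrightarrow R^{t_2}$ with $R^{t_2}\twoheadrightarrow Y$: the kernel of the resulting surjection $\RR^{t_2}\twoheadrightarrow Y$ is generated by the columns of any lift $\ttilde{A}$ together with the $t_2$ extra relations $(\gamma_K^{p^n}-1)e_1,\dots,(\gamma_K^{p^n}-1)e_{t_2}$ that cut the $\RR$-action down to an $R$-action. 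Hence $Y$ is presented over $\RR$ by the $t_2\times(t_3+t_2)$ matrix
$$B=\bigl[\,\ttilde{A}\,\bigm|\,(\gamma_K^{p^n}-1)I_{t_2}\,\bigr],$$
and $\Fitt_\RR(Y)=\Min_{t_2}(B)$.

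The last step is a block-determinant computation. Any $t_2\times t_2$ submatrix of $B$ is specified by choosing $e$ columns of $\ttilde{A}$, indexed by $J\subset\{1,\dots,t_3\}$, together with $t_2-e$ columns of the identity block, indexed by $K\subset\{1,\dots,t_2\}$. After reordering rows, such a submatrix has the block form
$$\begin{pmatrix}\ttilde{A}_{K,J} & (\gamma_K^{p^n}-1)I_{t_2-e}\\ \ttilde{A}_{K^c,J} & 0\end{pmatrix},$$
whose determinant is $\pm(\gamma_K^{p^n}-1)^{t_2-e}\det(\ttilde{A}_{K^c,J})$ by Laplace expansion along the second block. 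As $(J,K)$ ranges over all admissible choices, the second factor sweeps out (up to sign) all $e\times e$ minors of $\ttilde{A}$, giving
$$\Fitt_\RR(Y)\;=\;\sum_{e=0}^{t_2}(\gamma_K^{p^n}-1)^{t_2-e}\,\Min_e(\ttilde{A})$$
(with $\Min_e(\ttilde{A})=0$ for $e>\min(t_2,t_3)$). Substitution into the first display and factoring out $(\gamma_K^{p^n}-1)^{t_2-t_1}$ then yields the formula of the proposition. I foresee no real obstacle: the conceptual point is simply that lifting a presentation from $R$ to $\RR$ costs exactly one additional $(\gamma_K^{p^n}-1)I_{t_2}$ block, after which the Laplace expansion is routine.
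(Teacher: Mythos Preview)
Your proof is correct and follows essentially the same approach as the paper: both use the one-step resolution $0\to\Cok(A)\to R^{t_1}\to Z^0_{S'}\to 0$ to reduce to computing $\Fitt_\RR(\Cok(A))$, and both obtain the latter from the $\RR$-presentation $\bigl[\ttilde{A}\mid(\gamma_K^{p^n}-1)I_{t_2}\bigr]$. Your block-determinant argument spells out in full what the paper simply records as ``by the definition of Fitting ideals.''
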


\begin{proof}
The short exact sequence
\[
0 \to \Cok(A) \to R^{t_1} \to Z^0_{S'} \to 0
\]
provides an equality
\[
\Fitt^{[1]}_{\RR}(Z_{S'}^0) = (\gamma_K^{p^n}-1)^{-t_1} \Fitt_{\RR}(\Cok(A)).
\]
Furthermore there is an exact sequence
\[
\RR^{t_3} \oplus \RR^{t_2} \overset{(\ttilde{A}, 
\gamma_K^{p^n}-1)}{\to} \RR^{t_2} \to \Cok(A) \to 0.
\]
By the definition of Fitting ideals, we obtain
\[
\Fitt_{\RR}(\Cok(A)) = \sum_{e = 0}^{t_2} (\gamma_K^{p^n}-1)^{t_2 - e}\Min_e(\ttilde{A}). 
\]
\end{proof}

   \subsection{How to attack Problem \ref{prob:49}: an idea}\label{subsec:a02}

We explain a very general idea which will be essential in the subsequent sections.
We shall construct
a homological complex $D$ of $R$-modules such that: 
\begin{enumerate}
\item[(a)] the components of $D$ are finitely generated free $R$-modules;
\item[(b)] $D$ is located in degrees $\geq 0$, that is, all components 
   in degree $\leq -1$ are zero (remember that the numbering is
	 homological, so the degrees increase when we go to the left);
\item[(c)] $D$ is exact except in degree $1$, and 
\[
H_1(D) \simeq Z^0.
\]
\end{enumerate}

We have to warn our readers right away that we have
to write the degrees as {\it superscripts,\/} not as subscripts
(which would be much more customary), in our homological complexes.
We will need the subscript position later, to distinguish different complexes
of similar type.

Such a complex $D$ gives a way to compute $\Fitt^{[1]}_{\RR}(Z^0_{S'})$ 
from a complex $D$ as follows.

\begin{prop}\label{prop:52}
Consider the arithmetic situation as in Proposition \ref{prop:53}. Let 
\[
D = [\dots \to D^3 \overset{A}{\to} D^2 \to D^1 \to D^0 \to 0]
\]
 be a complex over $R$ satisfying the above conditions (a)(b)(c).
Put $t_n = \rank_{R}(D^n)$ for $n \geq 0$.
We regard $A$ as a matrix over $R$ by choosing bases of $D^3$ and $D^2$, and take a lift $\ttilde{A}$ over $\RR$.
Then we have
\[
\Fitt^{[1]}_{\RR}(Z^0_{S'}) = (\gamma_K^{p^n}-1)^{t_2 - t_1 + t_0} \sum_{e = 0}^{t_2} (\gamma_K^{p^n}-1)^{-e}\Min_e(\ttilde{A}). 
\]
\end{prop}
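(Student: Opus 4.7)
The plan is to reduce to the three-term setting of Proposition \ref{prop:53} by collapsing the bottom two terms $D^1 \to D^0$ of the complex $D$ into a single free module. Concretely, I will produce a four-term exact sequence
\[
R^{t_3} \xrightarrow{A} R^{t_2} \to R^{t_1 - t_0} \to Z^0_{S'} \to 0
\]
to which Proposition \ref{prop:53} applies directly, with the same matrix $A$ and the same lift $\ttilde{A}$.

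The key observation is that $R = \Z_p[G]$ with $G$ a finite abelian $p$-group is a local ring, so every finitely generated projective $R$-module is free. By condition (c) the last differential $\delta_1 : D^1 \to D^0$ is surjective; since $D^0$ is free this surjection splits, exhibiting $\Ker(\delta_1)$ as a direct summand of $D^1$. Hence $\Ker(\delta_1)$ is finitely generated projective, and therefore free, of rank $t_1 - t_0$. Combining this with exactness of $D$ in degrees $\geq 2$ and the identification $H_1(D) \simeq Z^0_{S'}$ produces the desired four-term exact sequence, in which the third arrow is the differential $\delta_2 : D^2 \to D^1$ corestricted to $\Ker(\delta_1)$, and the first two arrows are unchanged.

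Applying Proposition \ref{prop:53} to this new sequence, with third rank $t_1 - t_0$ replacing the $t_1$ of loc.\ cit., immediately yields
\[
\Fitt^{[1]}_{\RR}(Z^0_{S'}) = (\gamma_K^{p^n}-1)^{t_2 - (t_1 - t_0)} \sum_{e=0}^{t_2} (\gamma_K^{p^n}-1)^{-e}\Min_e(\ttilde{A}),
\]
which is the claimed formula. The only genuinely nontrivial step is the freeness of $\Ker(\delta_1)$, for which the local-ring property of $\Z_p[G]$ is essential; beyond that the argument is a purely formal combination of the splitting trick and Proposition \ref{prop:53}, so I do not foresee any serious obstacle.
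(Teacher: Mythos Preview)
Your argument is correct and follows exactly the same route as the paper: both split the surjection $D^1 \to D^0$ to identify $\Ker(d_1)$ as a free $R$-module of rank $t_1 - t_0$, then feed the resulting four-term sequence into Proposition \ref{prop:53}. Your version simply makes explicit the reason $\Ker(d_1)$ is free (locality of $\Z_p[G]$ for a finite abelian $p$-group $G$), which the paper leaves implicit.
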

\begin{proof}
By the properties (b) and (c), we have exact sequences
\[
0 \to \Ker(d_1) \to D^1 \overset{d_1}{\to} D^0 \to 0
\]
and
\[
\cdots \to D^3 \overset{A}{\to} D^2 \to \Ker(d_1) \to Z^0 \to 0.
\]
By the first sequence, the module $\Ker(d_1)$ is free of rank $t_1-t_0$.
Now Proposition \ref{prop:53} implies the assertion.
\end{proof}

In order to construct such a complex $D$, we will first construct complexes 
$C$ and $C_i$ ($i=1,\ldots,r$) which have similar properties.
More precisely, (a) and (b) will hold without change; and (c) is modified to 
(c$'$): $C$ and  $C_i$ are exact 
except in degree $0$, satisfying 
\[
H_0(C) \simeq \Z_p, \quad \text{and } H_0(C_i) = \Z_p[G/G_i]
\text{ for } i=1,\ldots,r.
\]
Moreover, we will construct a morphism of complexes 
\begin{equation}\label{eq:a08}
f: \bigoplus_{i=1}^r C_i \to C,
\end{equation}
which induces the augmentation homomorphism in degree $0$ homology.
Then, roughly speaking, $D$ can be constructed by either 
taking the mapping cone of $f$ or the cokernel of $f$; the choice
between these two options will depend on the precise setting.

   \subsection{The most general situation}\label{subsec:a09}

Let us describe a completely general method, even though
its usefulness is limited because it produces modules with far too large ranks.
The main ingredient is the standard resolution of finite groups,
which we recall now.

\begin{defn}
Let $G$ be a finite group.
For each $n \geq 0$, let $B_n(G)$ be the free $\Z_p[G]$-module on the set $\{(g_1, \dots, g_n) \mid g_1, \dots, g_n \in G\}$.
For $n \geq 1$, define a $\Z_p[G]$-homomorphism $B_n(G) \to B_{n-1}(G)$ by
\[
d_n((g_1, \dots, g_n)) = g_1(g_2, \dots, g_n) + \sum_{j=1}^{n-1} (-1)^j 
(g_1, \dots, g_j g_{j+1}, \dots, g_n) + (-1)^n (g_1, \dots, g_{n-1}).
\]
Moreover, define $\varepsilon: B_0(G) \to \Z_p$ by 
sending the empty tuple (which is by definition the only basis element of $B_0(G)$)
to $1$. 
\end{defn}

The following is well known.

\begin{prop}\label{prop:a06}
The sequence
\[
\dots \to B_2(G) \overset{d_2} \to B_1(G) \overset{d_1} \to B_0(G) 
  \overset{\varepsilon} \to \Z_p \to 0
\]
is exact.
\end{prop}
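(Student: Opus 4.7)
The plan is to prove exactness by constructing a $\Z_p$-linear (but not $\Z_p[G]$-linear) contracting chain homotopy. This is the classical argument for the standard resolution, and the only real work is a bookkeeping computation. First I would check that the sequence is a complex, i.e.\ $d_{n-1} \circ d_n = 0$ for $n \geq 2$ and $\varepsilon \circ d_1 = 0$. In each case, expanding both compositions on a basis element $(g_1,\ldots,g_n)$ produces pairs of terms with opposite signs that cancel after a single reindexing of the inner sum; this is routine.

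Next, I would set $B_{-1}(G) := \Z_p$ and $d_0 := \varepsilon$, and define $\Z_p$-linear maps $s_{n+1}: B_n(G) \to B_{n+1}(G)$ for $n \geq -1$ by $s_0(1) = ()$ and, on the $\Z_p$-basis $\{\, g(g_1,\ldots,g_n) : g, g_1, \ldots, g_n \in G\,\}$ of $B_n(G)$,
\[
s_{n+1}\bigl( g(g_1, \ldots, g_n) \bigr) \;=\; (g, g_1, \ldots, g_n).
\]
The key claim is the homotopy identity
\[
d_{n+1} \circ s_{n+1} + s_n \circ d_n \;=\; \id_{B_n(G)} \qquad (n \geq 0),
\]
together with $\varepsilon \circ s_0 = \id_{\Z_p}$. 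Once this is in place, for any $x$ in the kernel of $d_n$ we immediately get $x = d_{n+1}(s_{n+1}(x))$, so $x$ is a boundary; this yields exactness at every $B_n(G)$ as well as surjectivity of $\varepsilon$.

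The main obstacle, such as it is, is the verification of the homotopy identity on a basis element $g(g_1,\ldots,g_n)$ for $n \geq 1$. Expanding $d_{n+1}((g, g_1, \ldots, g_n))$ by the defining formula produces the leading term $g(g_1,\ldots,g_n)$, a term $-(g g_1, g_2, \ldots, g_n)$, an alternating sum $\sum_{j=2}^{n} (-1)^j (g, g_1, \ldots, g_{j-1} g_j, \ldots, g_n)$, and a trailing term $(-1)^{n+1}(g, g_1, \ldots, g_{n-1})$. On the other hand, using the $\Z_p[G]$-linearity of $d_n$, the composition $s_n \circ d_n$ applied to $g(g_1,\ldots,g_n)$ yields $+(g g_1, g_2, \ldots, g_n)$, the sum $\sum_{j=1}^{n-1}(-1)^j (g, g_1, \ldots, g_j g_{j+1}, \ldots, g_n)$, and $(-1)^n (g, g_1, \ldots, g_{n-1})$. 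After shifting the summation index by one in one of the sums, the two middle sums cancel; the two end contributions with exponents $n+1$ and $n$ cancel; the two $(g g_1, g_2, \ldots)$ terms cancel; and we are left with $g(g_1,\ldots,g_n)$, as required. The cases $n = 0$ and the verification $\varepsilon(s_0(1)) = \varepsilon(()) = 1$ are immediate, and this completes the plan.
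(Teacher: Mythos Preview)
Your proof is correct: the contracting homotopy $s_{n+1}\bigl(g(g_1,\ldots,g_n)\bigr) = (g,g_1,\ldots,g_n)$ is exactly the classical argument for exactness of the bar resolution, and your bookkeeping checks out. The paper gives no proof at all, simply declaring the result well known, so there is nothing further to compare.
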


Therefore, the complex 
\[
C = [\dots \to B_2(G) \overset{d_2} \to B_1(G) \overset{d_1} \to B_0(G) \to 0]
\]
satisfies the conditions described above.
Similarly, for each $1 \leq i \leq r$, the complex
\[
C_i = [\dots \to B_2(G_i) \overset{d_2} \to B_1(G_i) \overset{d_1} \to B_0(G_i) \to 0] \otimes_{\Z_p[G_i]} \Z_p[G]
\]
satisfies the required conditions, because then 
$H_0(C_i) = \Z_p \otimes_{\Z_p[G_i]} \Z_p[G] = \Z_p[G/G_i]$.

For every $i\in\{1,\ldots,r\}$ there is a natural morphism $C_i \to C$ induced by
\[
\xymatrix{
\cdots \ar[r]
& B_2(G_i) \ar[r]^{d_2} \ar[d]
& B_1(G_i) \ar[r]^{d_1} \ar[d] 
& B_0(G_i) \ar[d] \ar[r]
& 0\\
\cdots \ar[r]
& B_2(G) \ar[r]_{d_2}
& B_1(G) \ar[r]_{d_1}
& B_0(G) \ar[r]
& 0
}
\]
where the vertical arrow in degree $n$ sends the basis element 
$(g_1, \dots, g_n) \in B_n(G_i)$ to the ``same'' basis element
$(g_1, \dots, g_n) \in B_n(G)$.
Thus we have a morphism $f$ as claimed in \eqref{eq:a08}.

Let $D = \Cone(f)$ be the mapping cone of $f$, so we have an exact sequence
\[
0 \to C \to D \to \bigoplus_{i=1}^r C_i[-1] \to 0.
\]
Then the conditions (a)(b)(c) for $D$ hold by construction.

     \subsection{A first special setting}\label{subsec:a14}

As we said above, the above construction tends to lead to
a free resolution of $Z^0$ with extremely unwieldy terms. Motivated
by this, we consider in this subsection the (fairly rare) case where
\[
G = G_1 \times \dots \times G_r
\]
and moreover $G_i$ is cyclic for each $i$.
In this case, we shall obtain an alternative construction of $C, C_i$, and $D$
involving much smaller modules.

\subsubsection{Definition of $C$}\label{subsubsec:a11}

The construction of $C$ will closely follow the
approach of the first and the third author in \cite{GK15}.
For all $i$, we choose a generator $\sigma_i$ of $G_i$, and
we denote by $N_{G_i}$ the norm element of $\Z_p[G_i]$.
Define a complex $E_i$ by
\begin{equation}\label{eq:a12}
E_i = [\dots \overset{\sigma_i -1}{\to} \Z_p[G_i] \overset{N_{G_i}}{\to} \Z_p[G_i]  \overset{\sigma_i -1}{\to} \Z_p[G_i] \to 0].
\end{equation}
Then $E_i$ is exact except for degree $0$, and $H_0(E_i) = \Z_p$.
We define
\[
C = E_1 \otimes_{\Z_p} \dots \otimes_{\Z_p} E_r,
\]
which satisfies the conditions (a)(b)(c), since $H_0(C) = \Z_p \otimes_{\Z_p} \dots \otimes_{\Z_p} \Z_p = \Z_p$.

The structure of $C$ is fully described in \cite{GK15}.
In particular, for each $n \geq 0$, the $n$-th component of $C$ 
is the free $R$-module on the set of monomials
\[
\{x_{l_1} \dots x_{l_n} \mid 1 \leq l_1 \leq \dots \leq l_n \leq r\}.
\]

\subsubsection{Definition of $C_i$}

For each $1 \leq j \leq r$, define
\[
E_j' = \Z_p[G_j][0] = [\dots \to 0 \to 0 \to \Z_p[G_j] \to 0].
\]
Thus, $\Z_p[G_j]$ is placed in degree zero. Define $C_i$ by
\[
C_i = E_1' \otimes_{\Z_p} \dots \otimes_{\Z_p} E_i \otimes_{\Z_p} \dots \otimes_{\Z_p} E_r'.
\]
(Here, only the $i$-th component is $E_i$, and all other components are $E_j'$.)
Then the conditions (a)(b)(c) hold because
\[
H_0(C_i) = \Z_p[G_1] \otimes_{\Z_p} \dots \otimes_{\Z_p} \Z_p \otimes_{\Z_p} \dots \otimes_{\Z_p} \Z_p[G_r]
= \Z_p[G/G_i].
\]

Note that the structure of $C_i$ is quite easy to understand. In a way,
$C_i$ arises from $E_i$ via base change from the smaller group ring $\Z_p[G_i]$
to the big group ring $\Z_p[G]$.
For each $n \geq 0$, the $n$-th component of $C_i$ is a free $R$-module of rank one,
and the differentials are ``the same'' as in the complex $E_i$.
The definition of the complexes $C_i$ is arranged in this particular way 
in order to make it possible to construct a map $f$ of complexes below.

\subsubsection{Definition of $f$}

For each $j \neq i$, we have a unique morphism $E_j' \to E_j$ which is identity in degree $0$.
Together with the identity morphism $E_i \to E_i$, we get a morphism $C_i \to C$.
Thus we obtain a morphism $f$ as claimed in \eqref{eq:a08}.

It is not hard to see that in degree $n$, the morphism $f$ sends the
canonical basis element of $C_i$ to the basis element $x_{i}^n$ of $C^n$.
This is a very special basis element, labeled by a power of one single variable
$x_i$; recall that the general basis element of $C^n$ is labeled 
by a general monomial 
of degree $n$ in $x_1,\ldots, x_r$.

\subsubsection{Definition of $D$}

We can take the mapping cone of $f$ to construct $D$ as in Section 
\ref{subsec:a09}. However, in our special case, it is much more 
efficient to consider the ``cokernel'' of $f$.
The quotation marks are supposed to draw attention to the minor 
problem that $f$ is not injective in degree $0$.
In fact, in degree $0$, the morphism $f$ looks like
\[
\Sigma: \bigoplus_{i=1}^r \Z_p[G] \to \Z_p[G].
\]
On the other hand, in all strictly positive degrees, the morphism 
$f: \bigoplus_{i=1}^r C_i \to C$ is fortunately injective 
and the cokernel is free over $R = \Z_p[G]$.
These facts can be read off from the description of $f$ just given,
in terms of the bases.

To avoid the minor problem in degree $0$, we modify $C$ to
\begin{equation}\label{eq:a10}
C' = C \oplus Y,
\end{equation}
where the acyclic complex 
$Y = [\dots \to 0 \to \bigoplus_{i=1}^r \Z_p[G] 
\overset{\id}{\to} \bigoplus_{i=1}^r \Z_p[G] \to 0]$ 
is concentrated in degrees $1$ and  $0$.
Then we can extend $f$ to an {\it injective} morphism 
$f': \bigoplus_{i=1}^r C_i \to C'$ such that the cokernel 
$D = \Cok(f': \bigoplus_{i=1}^r C_i \to C')$ satisfies the 
conditions (a)(b)(c). More precisely, we stipulate that in degree 0,
the new component of $f'$, that is, the additional
morphism of complexes $\bigoplus_{i=1}^{r} C_i \to Y$, 
is simply the identity morphism on $\bigoplus_{i=1}^{r} \Z_p[G]$.

This completes the construction of $D$.
Moreover the construction gives us a nice description of $D$ at no
expense at all. Indeed, the component $D^n$ of $D$ in any degree $n \geq 2$ 
is the free $R$-module on the set 
\[
\{x_{l_1} \dots x_{l_n} \mid 1 \leq l_1 \leq \dots \leq l_n \leq r \} \setminus \{x_1^n, \dots, x_r^n\}.
\]
(Here is a catch phrase describing this set: Take all monomials 
of degree $n$ and throw out the pure powers.)
The structure morphisms of $D$ are canonically induced by those of $C$.

\subsubsection{Arithmetic situation}\label{subsubsec:72}

Let us get back to the arithmetic situation.

We assume $K \cap k_{\infty} = k$ and also 
that every $v \in S'$ is inert in $K_{\infty}/K$.
By this assumption, we can put $G = \Gal(K/k)$ (see the text before Proposition \ref{prop:53}).
We label $S' = \{v_1, \dots, v_r\}$, put the decomposition groups $G_i = G_{v_i}$ for 
$i = 1, \dots, r$, and we suppose that $G = G_1 \times \dots \times G_r$ 
and each $G_i$ is cyclic.
Then, using the above information, it is possible to obtain much more precise
information on $\Fitt_{\RR}(Z^0_{S'})$.

\begin{eg}
Here is a modest numerical example. We take $p=3$, $k=\Q$ and
$K$ the compositum of the cubic extensions of $\Q$ with conductor
7 and 223 respectively. The primes 7 and 223 stay inert in $k_\infty$
because they are congruent to 1 modulo 3 but not modulo 9. 
We take $S'=\{7,223\}$. The group
$G=\Gal(K/\Q)$ is the product of the two decomposition groups at
7 and 223, since these two primes are cubic residues modulo each other.
This shows that we are indeed in the setting considered in this subsection
with $r = 2$.
We have not made any effort to determine the modified Stickelberger element
(even approximately), but this should be possible, in principle,
by going over to the minus side and using classical cyclotomic
Stickelberger elements.
\end{eg}

Returning to the general case, we construct the complex $D$ as above.
Then the ranks $t_n = \rank_R(D^n)$ satisfy $t_0 = 1, t_1 = r, t_2 = r(r-1)/2$.
Let $A$ be the matrix that describes the differential $d_3:
D^3 \to D^2$ in the complex $D$ constructed above, in the canonical
bases. Then the rows (columns) of $A$ are indexed by the monomials
in $x_1,\ldots,x_r$ of degree 3 (degree 2 respectively), with the
extra restriction that the pure powers $x_i^3$ ($x_i^2$ resp.) are omitted.
In \cite{GK15} and \cite{GKT19}, the differential $C^3 \to C^2$ was studied. It was
described by a matrix $\tilde M_r$, whose rows and columns were
indexed in exactly the same fashion, with the only difference
that the pure cubes and squares were still present as labels.
Since $D$ is obtained as a homomorphic image of $C$ as discussed
above, it is very simple to describe the new matrix $A$. It is obtained
from $\tilde M_r$ in \cite{GK15}, \cite{GKT19}
by eliminating all rows with labels $x_i^3$ and all
columns with labels $x_j^2$ (with $1\le i,j \le r$).  

The entries of $A$
are all of the form $\nu_i$ or $\tau_i$ (neglecting signs),
where we put $\nu_i = N_{G_i}$, the norm element, and $\tau_i = \sigma_i-1$ for compatibility with \cite{GK15}.
Note that $\tau_i\nu_i=0$.
Since we are assuming $K \cap k_{\infty} = k$, the natural map $H = \Gal(K_{\infty}/k_{\infty}) \to G = \Gal(K/k)$ is an isomorphism, 
so the matrix $A$ can be lifted to a matrix $\ttilde{A}$ uniquely as a matrix over $\Z_p[H]$.
We denote this matrix $\ttilde{A}$ simply by $A$.
Similarly by abuse of notation, we use the same symbols $\sigma_i, \nu_i, \tau_i$ in $H$, which are the canonical lifts from $G$.
Then the entries of the lifted matrix $A = \ttilde{A}$ have 
the same description in terms of $\nu_i$ and $\tau_i$.
Now by Proposition \ref{prop:52}, we have
\begin{equation}\label{eq:70}
  \Fitt^{[1]}_{\RR}(Z^0_{S'}) 
	 = T^{t_2 - r + 1} \sum_{e=0}^{t_2} 
	         T^{-e} \Min_{e}({A}),
\end{equation}
where we put $T = \gamma_K - 1$.

Let us first give examples where $r = 2$ or $r = 3$.
When we write presentation matrices in this section, we use the {\it row vector} convention. 

\begin{eg}\label{eg:68}
Suppose that $r=2$.
Then we have
\[
A  =  \left( \begin{array}{c}
  \nu_1\\
  \nu_2
      \end{array} \right),
\]
which shows that 
\[
\Fitt^{[1]}_{\RR}(Z^0) = (1, \nu_1/T, \nu_2/T).
\]
\end{eg}

\begin{eg}\label{eg:69}
Suppose $r = 3$.
We then get:    
\[
A  =  \left( \begin{array}{ccc}
  \nu_1 &&\\
  \nu_2 &&\\
        &\nu_1 &\\
        &\nu_3 &\\
        &         &\nu_2 \\
        &         &\nu_3 \\
\tau_3  & -\tau_2 & \tau_1
            \end{array} \right).
\]
Let us compute the 3-minors of $A$. To begin with, we
easily see that neither $\nu_{1}^2\tau_{2}$ nor $\nu_{1}^2\tau_{3}$ 
appears as a minor. But $\nu_{1}\nu_{2}\tau_{3}$ does appear. By a not overly
tedious complete verification, one obtains the following result.
Let 
\[
J=(\nu_{1}, \nu_{2}, \nu_{3}, \tau_{1},\tau_{2}, \tau_{3})
\]
be the ideal of $\RR$ generated by the given list of elements.
Then we find that 
\begin{align}
\Min_3(A)  &= (\nu_1 \nu_2, \nu_2\nu_3, \nu_3\nu_1)J;\\
 \Min_2(A) &= (\nu_1, \nu_2, \nu_3)J; \\
 \Min_1(A) &= J; \\
 \Min_0(A) &= (1).
\end{align}
Thus we obtain in the case $r=3$:
\[
\Fitt^{[1]}_{\RR}(Z^0) = T^{-2}(\nu_1 \nu_2, \nu_2\nu_3, \nu_3\nu_1)J 
    + T^{-1}(\nu_1,\nu_2,\nu_3)J + J + (T).
\]
\end{eg}

Let us now go back to general $r$.
In the situation of the paper \cite{GKT19}, which deals with the somewhat
larger matrix $\tilde M_r$ instead of $A$, the minors of $\tilde M_r$ are
completely determined for general $r$. 
Since $A$ is a submatrix of $\tilde M_r$, this
certainly gives an upper bound for $\Fitt^{[1]}_{\RR}(Z^0)$, which
is already something to start with.
We will be a little more ambitious and state a conjectural equality
in a moment. 

Our conjectural decription involves the notion of ``admissible''
$\nu$-monomials, which was already crucial in \cite{GKT19}. 
We quickly review the definition here and refer to that paper (\cite[Section 1.2]{GKT19}) for more information.

A $\nu$-monomial is, by definition, any expression $\nu_1^{f_1}\ldots \nu_r^{f_r}$
with exponents $f_1, \dots, f_r \geq 0$. 
Similarly, one defines $\tau$-monomials.
A $(\tau, \nu)$-monomial is, by definition, a product $z=xy$ with 
$x$ a $\tau$-monomial
and $y$ a $\nu$-monomial. We also demand that the events
$\tau_i$ appears in $x$, and $\nu_i$ appears in $y$ do not
happen simultaneously for any $i$ (because if that happens, we
get $z=0$). The $\nu$-part of such a $(\tau, \nu)$-monomial $z$ is simply $y$;
we write $\nu(z) = y$. 

\begin{defn}
We say that a $\nu$-monomial $y = \nu_1^{f_1}\ldots \nu_r^{f_r}$ is admissible if there is a permutation $\sigma$ of $\{1, 2, \dots, r\}$ satisfying
\[
f_{\sigma(1)} \geq f_{\sigma(2)} \geq \dots \geq f_{\sigma(r)}
\]
and
\[
\sum_{j = 1}^i f_{\sigma(j)} \leq \sum_{j=1}^i (r-j)
\]
for any $1 \leq i \leq r$.
\end{defn}

Putting it very roughly: to be an admissible monomial, the exponents must 
not be distributed too unevenly.

\begin{eg}
For $r=3$, a $\nu$-monomial of degree $3$ is admissible if
and only if it is not the cube of one $\nu_i$. 

Suppose $r=4$ and let us look at $\nu$-monomials of degree $6$.
Then for instance no exponent in an admissible monomial can exceed 3; but this does not suffice,
since for example $\nu_1^3\nu_2^3$ is not admissible either. 
On the other hand, $\nu_1^3\nu_2^2\nu_3$ is admissible, and many other
$\nu$-monomials as well.
\end{eg}

The following is proved in \cite{GKT19}.

\begin{thm}[{\cite{GKT19}}]
For all $e \geq 0$, the $e$-th minor ideal $\Min_d(\tilde M_r)$ is generated by all $(\tau, \nu)$-monomials of degree $e$
whose $\nu$-part is admissible. 
\end{thm}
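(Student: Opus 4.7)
My plan is to establish the equality of two ideals by proving containment in both directions, exploiting the very rigid combinatorial structure of the matrix $\tilde M_r$. The first step is to write down $\tilde M_r$ entirely explicitly: because $C$ is a tensor product of the periodic complexes $E_i$ of \eqref{eq:a12}, every nonzero entry of $\tilde M_r$ is (up to sign) either a norm $\nu_i = N_{G_i}$ or a $\tau_i = \sigma_i - 1$, and the position of each such entry is determined by which tensor factor contributes in the Koszul-style differential. In particular, for each row indexed by a monomial $x_{l_1}x_{l_2}x_{l_3}$ of degree $3$, only monomials of degree $2$ differing from it in exactly one ``slot'' can give nonzero entries, and the entry is $\pm\nu_i$ if the removed variable comes from the norm differential, or $\pm\tau_i$ if it comes from the $\sigma_i-1$ differential.

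Next I would prove the inclusion $\Min_e(\tilde M_r) \supseteq (\text{admissible } (\tau,\nu)\text{-monomials of degree }e)$ constructively. For each admissible $\nu$-monomial $y=\nu_1^{f_1}\cdots\nu_r^{f_r}$ with $\sum f_i \leq e$, and each compatible $\tau$-part $x$ such that $xy$ has degree $e$, I would exhibit a specific $e\times e$ submatrix of $\tilde M_r$ whose expansion along a carefully chosen diagonal yields $\pm xy$ (with the other terms in the expansion either vanishing because of $\tau_i\nu_i=0$ or belonging to the same ideal). The admissibility condition $\sum_{j\le i} f_{\sigma(j)} \le \sum_{j\le i}(r-j)$ should correspond exactly to a Hall-type marriage condition: it guarantees that enough distinct rows and columns are available to accommodate all the prescribed $\nu_i$-entries simultaneously along the diagonal.

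For the reverse inclusion I would proceed by induction on $e$ and on $r$, expanding an arbitrary $e$-minor via a Laplace cofactor expansion along a well-chosen row or column. Each resulting product of entries is a $(\tau,\nu)$-monomial (after applying $\tau_i\nu_i=0$ to kill incompatible products); the residual work is to prove that whenever such a product has a $\nu$-part which is not admissible, it can be rewritten as a combination of $(\tau,\nu)$-monomials whose $\nu$-parts are admissible, or shown to vanish. The mechanism here is a swap argument: if some $\nu_i$ appears with a too-high multiplicity $f_i$, then the corresponding rows of $\tilde M_r$ are too few to host $f_i$ disjoint $\nu_i$-entries in a nonsingular submatrix, forcing a dependency that can be used to rewrite the minor.

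The main obstacle will be the sharp combinatorial bookkeeping in this last step: showing that every minor belongs to the admissible ideal, and not a larger one. The admissibility criterion is surprisingly subtle (as \emph{e.g.} $\nu_1^3\nu_2^3$ for $r=4$ illustrates), and identifying the correct dependency that kills each non-admissible monomial seems to require either a Gr\"obner-basis-style reduction, or a refined pigeonhole argument on which columns of $\tilde M_r$ can simultaneously contribute $\nu_i$ for a given $i$. This is exactly the part carried out in detail in \cite{GKT19}, and I would follow that blueprint rather than try to reinvent it here.
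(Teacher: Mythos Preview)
The paper does not contain a proof of this theorem: it is stated purely as a citation of \cite{GKT19}, with no argument given in the present paper. Consequently there is no ``paper's own proof'' against which to compare your proposal.

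Your write-up is itself not a proof but a strategy outline, and you explicitly acknowledge this at the end (``This is exactly the part carried out in detail in \cite{GKT19}, and I would follow that blueprint rather than try to reinvent it here''). So in effect you and the paper are doing the same thing: deferring to \cite{GKT19}. The sketch you give --- describing the entries of $\tilde M_r$, proving one inclusion by exhibiting minors that realize each admissible monomial via a Hall-type matching, and proving the other by Laplace expansion plus a reduction argument showing non-admissible $\nu$-parts cannot survive --- is a plausible high-level summary of how such a result might be established, but none of the actual combinatorial work is carried out, and the hard step (showing every minor lies in the admissible ideal) is precisely the one you leave to the cited reference.
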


To state our conjecture, a little extra notation will be useful. For
any $\nu$-monomial $y$, let $n(y)$ denote the number of indices $i$
such that $\nu_i$ occurs in $y$. Define $M(d,\ell)$ to be the set
of $(\tau, \nu)$-monomials $z$ of degree $d$ 
such that $\nu(z)$ is admissible and $n(\nu(z))\ge \ell$
(more simply put: which contain at least $\ell$ different $\nu_i$).
Finally, recall that $t_2=r(r-1)/2$.  

\begin{conj}
For all $e\ge 0$, the $e$-th minor ideal $\Min_e(A)$ is generated by
$M(e, r-1-t_2+e)$. 
\end{conj}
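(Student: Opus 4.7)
The plan is to establish the conjecture by combining the complete description of the minors of $\tilde M_r$ from \cite{GKT19} with a careful combinatorial analysis of the passage from $\tilde M_r$ to the submatrix $A$. Since $A$ is obtained from $\tilde M_r$ by deleting the rows labelled by pure cubes $x_i^3$ and the columns labelled by pure squares $x_j^2$, the inclusion $\Min_e(A) \subseteq \Min_e(\tilde M_r)$ is automatic, so the admissibility of $\nu(z)$ for any generator $z$ of $\Min_e(A)$ comes for free. The real content of the conjecture is thus the additional quantitative constraint $n(\nu(z)) \geq r-1-t_2+e$ on the number of distinct $\nu_i$'s that occur.

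For the realizability direction $M(e, r-1-t_2+e) \subseteq \Min_e(A)$, I would refine the argument of \cite{GKT19} that exhibits admissible $(\tau,\nu)$-monomials as minors of $\tilde M_r$. Given a monomial $z$ of degree $e$ with $\nu(z)$ admissible and $n(\nu(z)) \geq r-1-t_2+e$, I would exhibit $e$ rows and $e$ columns of $\tilde M_r$ whose induced determinant equals $z$ modulo admissible monomials whose $\nu$-support is strictly larger. The extra hypothesis on $n(\nu(z))$ should be exactly what is needed to avoid selecting any pure-cube row or pure-square column, so that the chosen submatrix actually lies inside $A$. An induction on a suitable measure of the spread of $\nu(z)$ would then let one subtract off the correction terms against monomials already known to lie in $\Min_e(A)$.

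For the reverse containment, every generator of $\Min_e(A)$ is the determinant of an $e\times e$ submatrix of $A$; expanding it via the Leibniz formula, each summand is a $(\tau,\nu)$-monomial $z$ whose row and column labels are constrained to be non-pure-power monomials. The heart of the argument would be a combinatorial lemma asserting that for any nonzero such summand, the number $n(\nu(z))$ satisfies the stated bound. I expect this to be the main obstacle, because the inequality $n(\nu(z)) \geq r - 1 - t_2 + e$ reflects a fairly delicate balance between the variety forced by a collection of mixed column labels and what admissibility allows. A natural route would be to introduce an auxiliary combinatorial invariant attached to each row/column labelling (keeping track, for each $i$, of how the index $i$ is distributed between labels and entries) and to verify the bound by induction on $r$, with the cases $r=2$ and $r=3$ handled directly via Examples \ref{eg:68} and \ref{eg:69}.
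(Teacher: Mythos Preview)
The statement you are attempting to prove is labelled a \emph{Conjecture} in the paper, and the authors explicitly say they have not proved it: they verify it only for $r=2,3$ (and claim $r=4$), and write that ``a general proof for all $r$ would probably require to revisit most of the technical arguments in \cite{GKT19}, and we haven't tried to do this.'' There is therefore no proof in the paper to compare against.

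Your proposal is not a proof but a strategy outline, and you are candid about this yourself. The two main steps remain genuine gaps. For the inclusion $M(e,r-1-t_2+e)\subseteq \Min_e(A)$, you assert that the bound on $n(\nu(z))$ ``should be exactly what is needed'' to avoid pure-power rows and columns when realising $z$ as a minor; but the realisation procedure in \cite{GKT19} is intricate, and nothing in your sketch explains why the specific threshold $r-1-t_2+e$ is the right one, nor how the induction on the ``spread'' of $\nu(z)$ interacts with the deletion of pure-power labels. For the reverse inclusion, you reduce to a ``combinatorial lemma'' bounding $n(\nu(z))$ from below for any nonzero Leibniz summand of a minor of $A$, but you neither state this lemma precisely nor prove it, and you yourself flag it as ``the main obstacle.'' Note too that it is not enough to bound $n(\nu(z))$ for each individual Leibniz summand: the determinant is a signed sum of such monomials, and cancellation could in principle leave a residue outside the span of $M(e,r-1-t_2+e)$, so one would also need to control the structure of the full expansion, as is done in \cite{GKT19} for $\tilde M_r$. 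In short, your outline is a reasonable programme that aligns with what the authors themselves suggest would be required, but it does not close the conjecture.
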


Note that for all $e\ge t_2-r-1$, the second argument
of $M(e,-)$ occurring here 
is non-positive, so the restricting condition concerning
the number of $\nu_i$ that must show up in the monomials
is vacuously satisfied. 

By the equation \eqref{eq:70}, this conjecture
implies the following statement. We could call it
the weak form of the above conjecture (which 
then would be called the strong form):

\begin{conj}\label{conj:71}
We keep the same notation and assumptions.
Then the union of the following sets generates
the ideal $\Fitt^{[1]}_{\RR}(Z^0_{S'})$:
\[
  T^{1-r}M(t_2,r-1), T^{2-r}M(t_2-1,r-2), T^{3-r}M(t_2-2,r-3),
	 \ldots, T^{t_2+1-r}M(0,r-1-t_2).
\]
Here the last exponent $t_2+1-r$ can be rewritten
$(r-1)(r-2)/2$, and the last set $M(0,r-1-t_2)$  only consists of
the trivial monomial 1.
\end{conj}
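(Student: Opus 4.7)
\medskip

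\noindent\textbf{Proof plan.} My strategy is to reduce Conjecture~\ref{conj:71} to a stronger form describing each minor ideal $\Min_e(A)$ individually, and then to attack the latter by systematic comparison with the larger matrix $\tilde M_r$ whose minors were computed in \cite{GKT19}.

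First, the reduction. Granting the strong form $\Min_e(A) = (M(e,\, r-1-t_2+e))$ for every $e \geq 0$, substitution into \eqref{eq:70} and re-indexing by $k = t_2 - e$ yields
\[
\Fitt^{[1]}_{\RR}(Z^0_{S'}) \;=\; \sum_{k=0}^{t_2} T^{\,k+1-r}\, \bigl(M(t_2-k,\, r-1-k)\bigr),
\]
which is exactly the statement of Conjecture~\ref{conj:71}. So the task becomes the description of each $\Min_e(A)$.

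For the strong form, I exploit the fact that $A$ sits inside $\tilde M_r$ as the submatrix obtained by deleting the $r$ rows labelled by pure cubes $x_i^3$ and the $r$ columns labelled by pure squares $x_j^2$. Hence every $e$-minor of $A$ is automatically an $e$-minor of $\tilde M_r$, which by the theorem recalled from \cite{GKT19} is a $(\tau,\nu)$-monomial of degree $e$ with admissible $\nu$-part. Thus $\Min_e(A)$ is contained in the ideal of all admissible degree-$e$ $(\tau,\nu)$-monomials for free, and the real content is the extra distinctness condition $n(\nu(z)) \geq r-1-t_2+e$. For the inclusion $\supseteq$ I would revisit the constructive part of \cite{GKT19}: each admissible monomial is realised there by an explicit choice of rows and columns of $\tilde M_r$, and one must show that whenever the $\nu$-part contains sufficiently many distinct $\nu_i$'s, those choices can be perturbed so as to avoid the forbidden pure-power rows and columns. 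Heuristically, each removed row or column consumes one degree of freedom, and the threshold $r-1-t_2+e$ is exactly what guarantees that enough slack remains.

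The main obstacle is the reverse inclusion $\subseteq$, which is a non-existence statement: no $(\tau,\nu)$-monomial with too few distinct $\nu_i$'s can arise as an $e$-minor of $A$. My plan here is twofold. First, Laplace expansion along the removed rows and columns expresses a given $e$-minor of $\tilde M_r$ as a signed sum of products involving the deleted entries, with a distinguished summand that is a genuine $e$-minor of $A$; admissibility bookkeeping should then force that summand to vanish whenever the $\nu$-part is too concentrated. Second, specialising along a character $\chi$ of $G$ that kills the cyclic factors outside the support of $\nu(z)$ collapses $\Min_e(A)$ in a controlled way, allowing a direct comparison with the conjectural formula and pinning down the precise distinctness threshold. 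I expect the finished argument to proceed by induction on $r$, exploiting the compatibility of the tensor-product construction of $D$ with restriction to subgroups of $G$; the cases $r=2,3$ worked out in Examples~\ref{eg:68} and~\ref{eg:69} provide the base cases and a concrete template to generalise.
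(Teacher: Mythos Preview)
The statement you are attempting to prove is a \emph{conjecture} in the paper, not a theorem: there is no proof in the paper to compare against. Immediately after stating it, the authors write that the conjecture agrees with their computations for $r=2,3$ (and they believe $r=4$), but that ``a general proof for all $r$ would probably require to revisit most of the technical arguments in \cite{GKT19}, and we haven't tried to do this.'' So your proposal is not competing with an existing argument; it is an outline for attacking an open problem.

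Your reduction step is correct and is exactly what the paper observes: the strong form (the description of each $\Min_e(A)$) implies Conjecture~\ref{conj:71} via \eqref{eq:70}. Beyond that, what you have written is a plan rather than a proof, and it contains a slip worth flagging. You write that every $e$-minor of $A$ ``is a $(\tau,\nu)$-monomial of degree $e$ with admissible $\nu$-part'' by the theorem from \cite{GKT19}. That theorem does \emph{not} say this: it says the minor \emph{ideal} $\Min_e(\tilde M_r)$ is generated by such monomials, not that each individual minor is one. Your next sentence (that $\Min_e(A)\subseteq \Min_e(\tilde M_r)$ lies in the ideal of admissible monomials) is nonetheless correct, so the slip is in the justification rather than the conclusion; but be careful, since the hard inclusion $\Min_e(A)\subseteq (M(e,r-1-t_2+e))$ genuinely requires controlling how the deletion of pure-power rows and columns sharpens the generating set, and nothing in your Laplace-expansion or character-specialisation sketch comes close to doing that. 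The authors' own assessment --- that one would have to rework most of \cite{GKT19} --- should be taken seriously: the combinatorics of admissibility under row/column deletion is the whole difficulty, and your proposal does not yet engage with it.
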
    

It is not difficult to see that these conjectures agree with the result of our
calculations in the cases $r = 2$ (Example \ref{eg:68}) or $r=3$ (Example \ref{eg:69}). 
We think that we have verified it completely for $r=4$ as well. 
However, a general
proof for all $r$ would probably require to revisit most of the
technical arguments in \cite{GKT19}, and we haven't tried to do this.

We point out that
while the latter conjecture seems logically weaker than the former,
it is hard to image a proof (even partial)
of the weaker conjecture which does not pass through a proof of the 
strong one.

     \subsection{Another special setting}\label{subsec:a15}

In this subsection, apart from the setting in Subsection \ref{subsec:a14}, we suppose that $r=1$, so we are given only 
one subgroup $G_1 \subset G$.
In this case, we can decompose $G$ as an abelian group into
\[
G = G^{(1)} \times \dots \times G^{(s)}
\]
where all factors $G^{(j)}$ are non-trivial cyclic.
Hence $s$ is the $p$-rank of $G$.
Moreover, we can arrange the decomposition so that 
\[
G_1 = G_1^{(1)} \times \dots \times G_1^{(s)}
\]
with subgroups $G_1^{(j)} \subset G^{(j)}$.
Note that this is where we use $r=1$; in the general case, we cannot
expect a decomposition of $G$ into cyclic factors that is compatible
with all subgroups $G_i$. 

\begin{rem}\label{rem:73}
An important remark is in order. In our arithmetical setting for abelian $p$-extension $K/k$, $G_1$ is
always the decomposition group $G_v$ of a tamely ramified prime $v$ in some
abelian extension $K_n/k$ with group $G$. Since the ramification group
of such a prime is always cyclic, $G_v$ is always generated by 
at most two elements as an abelian $p$-group. Thus in the last formula
we can arrange things so as to have $G_1 = G_1^{(1)} \times G_1^{(2)}$. 
But in practice, this reduction is possibly not too helpful, see a
further comment below, where we do some calculations for the case $s=2$.
\end{rem}

\subsubsection{Definition of $C, C_1, f, D$}
Exactly as in Section \ref{subsubsec:a11}, 
we fix a generator $\sigma^{(j)}$ of $G^{(j)}$, for $1\le j \le s$. 
We  define a complex $E^{(j)}$ for every $j$ by \eqref{eq:a12}, with $G_i$ 
replaced by $G^{(j)}$, and a complex $C = E^{(1)} \otimes \dots \otimes E^{(s)}$.
Moreover, we may define $E_1^{(j)}$ exactly as $E^{(j)}$, just
replacing $G^{(j)}$ by $G_1^{(j)}$ and taking $(\sigma^{(j)})^{m_j}$ as the chosen generator of $G_1^{(j)}$, where we put $m_j = (G^{(j)}: G_1^{(j)})$.
Then we may put
$C_1 =  (E_1^{(1)} \otimes \dots \otimes E_1^{(s)}) \otimes_{\Z_p[G_1]} \Z_p[G]$.
It can be checked that $C$ and $C_1$ again satisfy the conditions (a)(b)(c).
(Note that there is only one value of the index $i$ now; $i=1$.)

We can define a morphism $f: E_1^{(j)} \to E^{(j)}$ explicitly.
We stipulate that $f$ is identity in every even degree, and 
$f$ is multiplication by $\mu^{(j)} =
1 + \sigma^{(j)} + \dots + (\sigma^{(j)})^{m_j-1}$ 
in every odd degree. We omit some easy verifications that certain
squares commute, making $f$ into a morphism of complexes. 

Hence we obtain a morphism $f: C_1 \to C$.
By taking the mapping cone of $f$, it is again possible to construct 
a complex $D$ satisfying the conditions (a)(b)(c).
 However, the
matrices representing the maps $C_1^n \to C^n$ in each degree $n$ will
contain entries involving factors $\mu^{(j)}$. Since these elements are in general
neither zero nor units in $R$, it cannot be expected (and indeed does
not happen in general) that the degree-wise cokernels of $f$ are again
free over $R$. So it does not seem possible to replace the cone of $f$
by the cokernel, and this makes the calculations more difficult.

\subsubsection{Arithmetic situation}\label{subsubsec:86}

As in Subsubsection \ref{subsubsec:72}, we assume 
$K \cap k_{\infty} = k$ and that every $v \in S'$ is inert in $K_{\infty}/K$.
Put $G = \Gal(K/k)$, which admits an isomorphism from $H = \Gal(K_{\infty}/k_{\infty})$.
Suppose that $S'$ consists of a single place $v_1$ and put $G_1 = G_{v_1}$.
Let $s$ be the $p$-rank of $G$.

\begin{eg}
First suppose $s = 1$; we omit all super- and subscripts $j$,
since $j=1$ is the only value. 
For example, $\sigma$ is a generator of $G$, $\tau = \sigma - 1$, and $m = (G: G_1)$.
Let $\ttilde{\nu}$ be the norm element of $G_1$.

One can check that 
the ranks $t_n$ of $D^n$ are given by $t_n = 2$ for $n \geq 1$ and $t_0 = 1$.
The differential from $D^3$ to $D^2$ is given by the square matrix
$\begin{pmatrix} \tilde\nu & 1 \cr 0 & \tau \end{pmatrix}$. Let $T = \gamma_K - 1$ be
defined as in Subsubsection \ref{subsubsec:72}. 
By Proposition \ref{prop:52} again,
we obtain:
\[
\Fitt^{[1]}_{\RR}(Z^0_{S'}) = (1, T^{-1}\ttilde{\nu}\tau). 
\]
For this case it is quite possible (and actually quicker) to calculate
the left hand side directly, by finding a simple resolution
of $Z^0$ by hand; we did it, and the results agree.
\end{eg}

\begin{eg}
Next suppose $s=2$. 
This case should be prototypical in a 
certain way, since (as pointed out in Remark \ref{rem:73}) we can always assume that
$G_1$ has only two cyclic summands $G_1^{(1)}$ and $G_1^{(2)}$;
of course $s$ can be larger than 2. Even letting $s=2$, we found the
calculation rather cumbersome. Again, we need to take the cone, not the
cokernel. 

The $R$-ranks $t_n$ of $D^n$ are given by 
$t_n = 2n + 1$ for $n \geq 0$. We determined
the matrix $A$ of the differential $D^3 \to D^2$. To write it down we
have to review notation:
For $j = 1, 2$, $\sigma^{(j)}$ is a generator of $G^{(j)}$ and $m_j = (G^{(j)}: G_1^{(j)})$; 
we put 
\begin{align}
\tau^{(j)} &= \sigma^{(j)}-1, \qquad \ttilde{\tau}^{(j)} = (\sigma^{(j)})^{m_j}-1,\\
\nu^{(j)} &= 1+\sigma^{(j)}+\dots+(\sigma^{(j)})^{n_j - 1}, \qquad
\ttilde{\nu}^{(j)} =
1 + (\sigma^{(j)})^{m_j} + \dots + (\sigma^{(j)})^{n_j-m_j},\\
\mu^{(j)} &= 1 + \sigma^{(j)} + \dots + (\sigma^{(j)})^{m_j-1},
\end{align}
where
$n_j=ord(\sigma^{(j)}) = \sharp G^{(j)}$.
 The outcome is the following 7 times 5 matrix:
\[
A = \begin{pmatrix} 
\ttilde{\nu}^{(1)} & & 1 \cr
\ttilde{\tau}^{(2)} & \ttilde{\nu}^{(1)} && \mu^{(1)}\mu^{(2)} \cr
  & \ttilde{\nu}^{(2)} &&& 1 \cr
	&& \tau^{(1)} \cr
	&& \tau^{(2)} & \nu^{(2)} \cr
	&&        & \nu^{(2)} & \tau^{(1)} \cr
	&&        &       &  \tau^{(2)}
\end{pmatrix}
\]
up to sign. 
Using ideals generated by the minors of this matrix $A$, it
is again possible to write down an expression that gives $\Fitt^{[1]}_{\RR}(Z^0)$.
But we have strong doubts whether such an expression would be
very enlightening. All 5-minors are 0 (unless we made an error), but
the minors of degree 4 and less seem to be so numerous and
hard-to-describe that it is
not really helpful to write them all down; in fact we gave up somewhere
along the way. 
\end{eg}


\section{Description of the Fitting ideal in the case that $\Gal(K/k)$ is cyclic}\label{sec:50}

In this section, we concentrate on cyclic $p$-extensions $K/k$ 
and we describe $\Fitt_{\RR}(X_{S_p})$ more explicitly from Theorem \ref{thm:28}.
We deduce two results from Theorem \ref{thm:28} 
(see Theorems \ref{thm:46} and \ref{thm:47}).
Both are a generalization of
the main result of the third author in \cite{Kur11} where the case 
$[K:k] = p$ was studied.
We keep the notations of Theorem \ref{thm:28}. 
As in Remark \ref{rem:87}, we will be assuming throughout $K \cap k_{\infty} = k$.
Moreover, we assume that
the $\mu$-invariant of $X_{S_p}$ vanishes.

For each finite place $v$ of $k$ outside $p$, 
as in Subsection \ref{subsecDecGroupRing}, 
$\nu_{v}: \Frac(\Z_p[[\GG/\TT_v]]) 
\to \Frac(\Z_p[[\GG]])$ is the map
induced by the multiplication by 
$N_{\TT_v}=\sum_{\sigma \in \TT_v} \sigma$.
Recall that $\sigma_v \in \GG/\TT_v$ is the Frobenius automorphism (Definition \ref{defn:74}).

Put $\Gamma_k = \Gal(k_{\infty}/k)$ and $H = \Gal(K_{\infty}/k_{\infty})$.
Then $N_H = \sum_{h \in H} h$ also 
induces a map 
$\Frac(\Z_p[[\Gamma_k]]) \to \Frac(\Z_p[[\GG]])$, which we denote by $\nu_H$.
Choose a topological generator $\gamma$ of $\Gamma_k$.

\begin{thm}\label{thm:46}
Suppose that $K/k$ is a cyclic $p$-extension
and $K \cap k_{\infty} = k$. 
Assume that we have a place $v^* \in S'$ such that $v^*$ is
totally ramified in $K/k$ and that $\GG_{v^*} \supset \GG_v$ holds for all $v\in S'$.
Then we have
\[
\Fitt_{\RR}(X_{S_p}) 
=  \left(1, \nu_{H}\frac{\gamma - 1}{\sigma_{v^*}-1} \right) 
  \prod_{v \in S', v \neq v^*} \left(1, \nu_v \frac{1}{\sigma_v-1} \right) 
   \theta_S^{\modif}.
\]
Note that, since $v^*$ is totally ramified in $K/k$, we have $\nu_H = \nu_{v^*}$.
\end{thm}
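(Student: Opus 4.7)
My plan is to invoke Theorem \ref{thm:28} to reduce the claim to computing $\Fitt^{[1]}_{\RR}(Z^0_{S'})$, and then to use Lemma \ref{lem:47} to split off the ``easy'' places, leaving one core computation for $v^*$. The hypothesis $\GG_{v^*} \supset \GG_v$ is exactly the hypothesis of Lemma \ref{lem:47}, and gives
\[
Z^0_{S'} \simeq Z_{v^*}^0 \oplus \bigoplus_{v \in S', v \neq v^*} Z_v.
\]
Since a direct sum of $1$-step resolutions is again a $1$-step resolution and the usual Fitting ideal is multiplicative on direct sums, the invariant $\Fitt^{[1]}_{\RR}$ is also multiplicative under direct sums. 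Together with Proposition \ref{prop:85} applied to each $v \neq v^*$, this reduces the theorem to the single identity
\[
\Fitt^{[1]}_{\RR}(Z_{v^*}^0) = \left(1,\, \nu_H \frac{\gamma - 1}{\sigma_{v^*} - 1}\right).
\]

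To set this up, I would use $K \cap k_\infty = k$ together with $K/k$ being cyclic to identify $\GG = H \times \Gamma_k$ with $H = \langle \eta \rangle$ cyclic. Since $v^* \nmid p$ is totally ramified in $K/k$ and unramified in $k_\infty/k$, the inertia $\TT_{v^*}$ maps isomorphically onto $H$, so in particular $\nu_{v^*} = \nu_H$. Lifting $\sigma_{v^*}$ to $\Gamma_k$ via the splitting and writing $\sigma_{v^*} - 1 = (\gamma - 1)\, a$ defines a unique $a \in \Lambda := \Z_p[[\Gamma_k]] \subset \RR$. One then reads off $Z_{v^*} = \RR/(\eta - 1, \sigma_{v^*}-1)\RR$, and tracking the augmentation sequence identifies $Z_{v^*}^0$ as generated by $\overline{\gamma - 1}$ with relations $\eta - 1$ and $a$, so $Z_{v^*}^0 \simeq \RR/(\eta - 1, a)\RR$.

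The heart of the argument is the explicit $1$-step resolution
\[
0 \to \RR/(a, N_H)\RR \xrightarrow{\,\eta - 1\,} \RR/a\RR \to Z_{v^*}^0 \to 0.
\]
The middle term has $\pd_{\RR} \leq 1$ because $a$ is a non-zero-divisor in $\RR$: it lies in the integral domain $\Lambda$, over which $\RR$ is a free module. The main technical step, and where I expect the real work to sit, is verifying exactness at the middle term; this reduces to the intersection identity
\[
a \RR \cap (\eta - 1)\RR = a(\eta - 1)\RR.
\]
This in turn hinges on two ingredients: $\Ann_{\RR}(\eta - 1) = N_H \RR$, which uses cyclicity of $H$, and the non-zero-divisor property of $a$. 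Concretely, given $as = (\eta - 1)t$, multiplying by $N_H$ yields $a N_H s = 0$, hence $N_H s = 0$, and so $s \in (\eta - 1)\RR$ as required. A short computation then identifies the kernel of $\RR/a\RR \twoheadrightarrow Z_{v^*}^0$ with $(\eta - 1)\RR/a(\eta - 1)\RR \simeq \RR/(a, N_H)\RR$.

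Granted the resolution, the definition of $\Fitt^{[1]}_{\RR}$ gives
\[
\Fitt^{[1]}_{\RR}(Z_{v^*}^0) = \Fitt_{\RR}(\RR/a\RR)^{-1} \cdot \Fitt_{\RR}(\RR/(a, N_H)\RR) = (a)^{-1}(a, N_H) = (1, N_H/a).
\]
Since $N_H/a = N_H (\gamma - 1)/(\sigma_{v^*} - 1) = \nu_H\bigl((\gamma - 1)/(\sigma_{v^*}-1)\bigr)$, this is exactly the claimed formula, and the theorem follows.
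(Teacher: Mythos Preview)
Your reduction via Theorem~\ref{thm:28}, Lemma~\ref{lem:47}, and Proposition~\ref{prop:85} matches the paper's proof exactly; the divergence is only in the core computation of $\Fitt^{[1]}_{\RR}(Z_{v^*}^0)$. The paper carries this out as an application of the machinery of Section~\ref{sec:48}: it passes to a finite quotient $G=\Gal(K_n/k)$, builds the tensor-product complexes $C_1$ and $C$ from the periodic resolutions of the two cyclic factors, forms the cokernel complex $D$, and then reads off the answer from the $2\times 2$ minors of an explicit $5\times 2$ presentation matrix via Proposition~\ref{prop:53}. Your route is more direct and stays entirely at the infinite level: the identification $Z_{v^*}^0\simeq\RR/(\eta-1,a)$ and the one-step resolution $0\to\RR/(a,N_H)\xrightarrow{\eta-1}\RR/a\RR\to Z_{v^*}^0\to 0$ are both correct, and the verification reduces, as you say, to the annihilator identity $\Ann_{\RR}(\eta-1)=N_H\RR$ for cyclic $H$ together with $a$ being a non-zero-divisor. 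Your approach is shorter and self-contained, at the cost of being tailored to this particular module; the paper's approach is longer but serves as a worked instance of the general method developed in Section~\ref{sec:48}.
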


Before we prove this theorem, 
let us deduce a corollary which is exactly 
Theorem 0.1 (1) in \cite{Kur11}.

In general, for the cyclotomic $\Z_p$-extension 
$k_{\infty}/k$, let $k_n$ denote its $n$-th layer.
For each finite place $v$ of $k$ outside $p$, let $n_v$ be the non-negative integer 
such that the decomposition field of $v$ in $k_{\infty}/k$ is $k_{n_v}$.
This use of $n_v$
cancels an earlier meaning of $n_v$ in the proof of Proposition \ref{prop:36}.
Note that, if $v$ is totally ramified in $K_{\infty}/k_{\infty}$, then $\sigma_v$ is an element of $\Gamma_k = \Gal(k_{\infty}/k)$ which generates the same subgroup as $\gamma^{p^{n_v}}$.

\begin{cor}[{\cite[Theorem 0.1(1)]{Kur11}}]\label{thm:45}
Suppose that $K/k$ is a cyclic extension of degree $p$ 
and that $K \cap k_{\infty} = k$.
Take $S = S_p \cup S_{\ram}(K/k)$ and suppose that 
$S' = S \setminus S_p \neq \emptyset$
\footnote{While in Theorem \ref{thm:28} we may assume 
$S' \neq \emptyset$ by simply adding an arbitrary non-$p$-adic 
finite place, in Theorem \ref{thm:45} we have to take 
$S' = S_{\ram}(K/k) \setminus S_p$, so the condition 
$S' \neq \emptyset$ is a non-trivial restriction.}
.
Then we have
\[
\Fitt_{\RR}(X_{S_p}) 
= \sum_{v' \in S'} \left( \left(1, \nu_H \frac{\gamma - 1}{\sigma_{v'}-1} \right) \prod_{v \in S', v \neq v'} \left(1, \nu_H \frac{1}{\sigma_v-1} \right) \right) 
   \cdot \theta_S^{\modif}.
\]
\end{cor}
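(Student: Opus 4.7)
The plan is to derive Corollary~\ref{thm:45} directly from Theorem~\ref{thm:46}: I will verify the latter's hypotheses in the present degree-$p$ setting, and then show that the apparently larger sum on the right-hand side of Corollary~\ref{thm:45} collapses to the single-term formula that Theorem~\ref{thm:46} provides.

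For the reduction, I would first observe that since $[K:k]=p$, every $v \in S' = S_{\ram}(K/k) \setminus S_p$ is automatically \emph{totally} ramified in $K/k$ (the ramification index divides $p$ and is nontrivial). Combined with the fact that $k_\infty/k$ is unramified at every $v \notin S_p$, this forces $\TT_v = H$ (hence $\nu_v = \nu_H$) for all $v \in S'$; and the decomposition $\GG = H \times \Gamma_k$ (valid because $K \cap k_\infty = k$) yields $\GG_v = H \times \langle \gamma^{p^{n_v}}\rangle$. Choosing $v^* \in S'$ with minimal $n_{v^*}$ therefore ensures $\GG_{v^*} \supset \GG_v$ for every $v \in S'$, so Theorem~\ref{thm:46} applies and gives
\[
\Fitt_{\RR}(X_{S_p}) = J_{v^*}\,\theta_{S}^{\modif}, \qquad
J_{v'} := \Bigl(1,\, \nu_H \tfrac{\gamma-1}{\sigma_{v'}-1}\Bigr) \prod_{\substack{v \in S' \\ v \neq v'}} \Bigl(1,\, \nu_H \tfrac{1}{\sigma_v-1}\Bigr).
\]
Since $J_{v^*}$ is exactly the $v'=v^*$ summand of the formula in the corollary, it remains only to prove the algebraic identity $\sum_{v' \in S'} J_{v'} = J_{v^*}$.

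The inclusion $J_{v^*} \subset \sum_{v'} J_{v'}$ is trivial, and for the reverse I would show $J_{v'} \subset J_{v^*}$ for each $v' \neq v^*$. Since the two ideals share the factor $\prod_{v \neq v', v^*}(1, \nu_H/(\sigma_v-1))$, the problem reduces to
\[
\Bigl(1,\, \nu_H \tfrac{\gamma-1}{\sigma_{v'}-1}\Bigr)\Bigl(1,\, \nu_H \tfrac{1}{\sigma_{v^*}-1}\Bigr) \ \subset\ \Bigl(1,\, \nu_H \tfrac{\gamma-1}{\sigma_{v^*}-1}\Bigr)\Bigl(1,\, \nu_H \tfrac{1}{\sigma_{v'}-1}\Bigr).
\]
The decisive input is that $n_{v^*} \leq n_{v'}$ makes the element
$\eta_{v'} := (\sigma_{v'}-1)/(\sigma_{v^*}-1) = \sum_{i=0}^{p^{n_{v'}-n_{v^*}}-1}\gamma^{i p^{n_{v^*}}}$
lie in $\RR$. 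Then one checks the generators of the left-hand side in turn: $\nu_H(\gamma-1)/(\sigma_{v'}-1) = (\gamma-1)\cdot \nu_H/(\sigma_{v'}-1)$ belongs to the right-hand side because $\gamma-1 \in \RR$; $\nu_H/(\sigma_{v^*}-1) = \eta_{v'}\cdot \nu_H/(\sigma_{v'}-1)$ belongs to it because $\eta_{v'} \in \RR$; and the ``mixed'' generator $\nu_H^2(\gamma-1)/((\sigma_{v'}-1)(\sigma_{v^*}-1))$ coincides with its right-hand counterpart by commutativity. Hence $J_{v'} \subset J_{v^*}$, completing the proof.

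The only point that requires any thought is this algebraic identity $\sum_{v'}J_{v'} = J_{v^*}$: on its face the sum looks strictly larger, since the privileged factor $\gamma-1$ is attached to a different place in each summand, but the divisibility $(\sigma_{v^*}-1)\mid(\sigma_{v'}-1)$ in $\RR$ absorbs all apparent discrepancies. I do not anticipate a serious technical obstruction beyond this generator chase.
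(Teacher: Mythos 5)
Your proposal is correct and follows essentially the same route as the paper: note that every $v\in S'$ is totally ramified because $[K:k]=p$ (so $\nu_v=\nu_H$), pick $v^*$ with $n_{v^*}$ minimal so that $\GG_{v^*}\supset\GG_v$, apply Theorem \ref{thm:46}, and then collapse the sum $\sum_{v'}J_{v'}$ to the single term $J_{v^*}$. The only difference is that you write out the generator chase behind the identity $\sum_{v'}J_{v'}=J_{v^*}$, which the paper dismisses with ``it is not hard to see''; your argument there is sound, resting on the divisibility $(\sigma_{v^*}-1)\mid(\sigma_{v'}-1)$ in $\RR$.
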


\begin{proof}
In this situation, any $v \in S'$ is totally ramified in $K/k$ since its degree is $p$.
Therefore, we have $\nu_{v}=\nu_{H}$.
Moreover, if we take $v^* \in S'$ such that $n_{v^*}$ is the minimum of the $n_v$, $v\in S'$, 
then $v^*$ satisfies the condition in Theorem \ref{thm:46}.
Then it is not hard to see that
\[
\sum_{v' \in S'} 
  \left( \left(1, \nu_H \frac{\gamma - 1}{\sigma_{v'}-1} \right) 
	\prod_{v \in S', v \neq v'} \left(1, \nu_H \frac{1}{\sigma_{v}-1} \right) \right)
= \left(1, \nu_H \frac{\gamma - 1}{\sigma_{v^*}-1} \right) 
 \prod_{v \in S', v \neq v^*} \left(1, \nu_H \frac{1}{\sigma_{v}-1} \right).
\]
Thus Theorem \ref{thm:46} implies the corollary.
\end{proof}

\begin{rem}\label{rem:51}
Here are several remarks on the equivalence of Theorem \ref{thm:45} 
and \cite[Theorem 0.1]{Kur11}.
\begin{itemize}
\item The statement of \cite{Kur11} concerns 
$(A_{K(\mu_{p^{\infty}})}^{\omega})^{\dual}$, where 
$A_{K(\mu_{p^{\infty}})}$ is the inductive limit of the $p$-parts 
of the ideal class groups of $K(\mu_{p^{n}})$ and $\omega$ denotes 
the Teichm\"{u}ller character.
Thanks to the Kummer duality between $A_{K(\mu_{p^{\infty}})}^{\omega}$ 
and $X_{S_p} = X_{S_p}(K_{\infty})$, we can translate the statement 
of \cite{Kur11} into a statement about $X_{S_p}$.
\item In the present paper, the hypothesis that the $\mu$-invariant
of $X_{S_p}$ vanishes
is slightly weaker than that of \cite{Kur11}.
\item The modified Stickelberger element $\vartheta_{K(\mu_{p^{\infty}})}$ 
is defined in \cite[(2.3.4)]{Kur11}, using a modifying factor $\xi$.
This factor $\xi$ corresponds to our modifying factor 
$\prod_{v \in S'} f_v^{-1}$.
\end{itemize}
\end{rem}




\begin{proof}[Proof of Theorem \ref{thm:46}]
By Theorem \ref{thm:28}, it is enough to show 
\[
\Fitt_{\RR}^{[1]}(Z_{S'}^0) 
  = \left(1, \nu_H \frac{\gamma - 1}{\sigma_{v^*}-1} \right) 
		  \prod_{v \in S', v \neq v^*} 
			   \left(1, \nu_v \frac{1}{\sigma_v-1} \right).
\]
By Lemma \ref{lem:47}, we have
\[
\Fitt_{\RR}^{[1]}(Z_{S'}^0) 
  = \Fitt_{\RR}^{[1]}(Z_{v^*}^0) 
	  \prod_{v \in S', v \neq v^*} \Fitt_{\RR}^{[1]}(Z_{v})
\]
since shifted Fitting ideals are multiplicative on direct sums.
The second term in the right hand side is computed in Proposition \ref{prop:85}.
For the first term, it is enough to show 
\[
\Fitt_{\RR}^{[1]}(\Z_p[\Gal(k_n/k)]^0) = \left(1, \nu_H \frac{\gamma - 1}{\gamma^{p^n}-1} \right)
\]
for any non-negative integer $n$.

In the rest of this calculation we will neglect some signs; 
this will play no role in the calculation of Fitting ideals via minors 
of certain matrices,
and it spares us the effort of being precise about the signs of morphisms
in the tensor product of complexes. These sign questions are certainly important
in many settings, but for us they are inessential and would mess up some arguments.

We apply the arguments of Sections 
\ref{subsec:a14} and \ref{subsec:a15} to 
\[
G = \Gal(K_n/k) = \Gal(K_n/k_n) \times \Gal(K_n/K).
\]
Take a generator $\delta$ of $H = \Gal(K_{\infty}/k_{\infty})$, 
which is identified with $\Gal(K_n/k_n) \subset G$.
Let $\gamma_K \in \Gal(K_{\infty}/K)$ be the lift of $\gamma \in \Gamma_k$.
As in Section \ref{subsec:a14}, define complexes
\[
C_1= [\dots \overset{\delta -1}{\to} \Z_p[G] \overset{N_H}{\to} \Z_p[G]  \overset{\delta -1}{\to} \Z_p[G] \to 0]
\]
and
\[
C = [\dots \overset{d_3}{\to} \Z_p[G]^3 \overset{d_2}{\to} \Z_p[G]^2  
 \overset{d_1}{\to} \Z_p[G] \to 0],
\]
where
\begin{align}
d_1 &= 
\begin{pmatrix}
\gamma_K-1 \\ \delta -1
\end{pmatrix},\\
d_2 &= 
\begin{pmatrix}
	N_n & 0 \\
	\delta - 1 & -(\gamma_K - 1)\\
	0 & N_H
\end{pmatrix},\\
d_3 &= 
\begin{pmatrix}
	\gamma_K - 1 & 0 & 0 \\
	\delta - 1 & -N_n & 0 \\
	0 & N_H & \gamma_K - 1 \\
	0 & 0 & \delta - 1
\end{pmatrix}
\end{align}
with $N_n = 1 + \gamma_K + \gamma_K^2 + \dots \gamma_K^{p^n-1}$
We have a natural injective homomorphism $C_1 \to C$ whose cokernel 
$D$ looks like
\[
\dots \to \Z_p[G]^3 \overset{d_3'}{\to} \Z_p[G]^2 \overset{d_2'}{\to} \Z_p[G] \to 0 \to 0,
\]
where $d_2', d_3', \dots$ are obtained by removing both the final row and the final column of $d_2, d_3, \dots$.
From this complex $D$, we obtain an exact sequence
\[
\RR^3 \oplus \RR^2 \overset{(\ttilde{d_3'}, \gamma_K^{p^n}-1)}{\to} \RR^2 \to \Z_p[G] \to \Z_p[\Gal(k_n/k)]^0 \to 0,
\]
where $\ttilde{d_3'}$ is any lift of $d_3'$.
Explicitly we can write down
\[
(\ttilde{d_3'}, \gamma_K^{p^n}-1) =
\begin{pmatrix}
	\gamma_K - 1 & 0 \\
	\delta - 1 & -N_n \\
	0 & N_H \\
	\gamma_K^{p^n}-1 & 0\\
	0 & \gamma_K^{p^n}-1
\end{pmatrix}
\]
and it is easy to see that the ideal generated by its $2 \times 2$ minors is
\[
( \gamma_K^{p^n}-1, N_H (\gamma_K-1)).
\]
Hence
\[
\Fitt_{\RR}^{[1]}(\Z_p[\Gal(k_n/k)]^0) 
= (\gamma_K^{p^n}-1)^{-1}( \gamma_K^{p^n}-1, N_H (\gamma_K-1))
= \left(1, \nu_H \frac{\gamma - 1}{\gamma^{p^n}-1} \right).
\]
This completes the proof.
\end{proof}

Finally there is another variant, where there is no privileged
place $v^*$ and still the degree of the cyclic $p$-extension $K/k$
can be arbitrary. The proof again uses the techniques
of Section 4; but this time the reduction lemma \ref{lem:47}
cannot be used. So one has to work with the full direct sum of ``local''
complexes $C_i$, not just one of them, and this makes it
inevitable to work with the cone, not the cokernel,
of the map $\bigoplus_i C_i \to C$ of complexes. This makes the
proof more complicated, but it will be given in very explicit terms.

\begin{thm}\label{thm:47}
Suppose that $K/k$ is a cyclic $p$-extension
with $K \cap k_{\infty} = k$ as before,
and let $\delta$ denote a generator of $H = \Gal(K_{\infty}/k_{\infty})$. 
Assume that the inertial degrees of all $v\in S'$ in $K/k$ are all 1
(that is, there is only ramification and splitting). 
Then $\Fitt^{[1]}_{\RR}(X_{S_p})$ is generated by the following
list of quantities:
  \[
   (\gamma-1,\delta-1) \prod_{v\in S'}\frac{\nu_v}{\sigma_v-1}\cdot \theta_S^{\modif}
  \]
and	
  \[
  \prod_{v\in J}\frac{\nu_v}{\sigma_v-1} \cdot \theta_S^{\modif},
  \]
where $J$ runs through all proper subsets of $S'$. The quantity
corresponding to $J=\emptyset$ is to be understood as 1.
\end{thm}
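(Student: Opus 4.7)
By Theorem \ref{thm:28}, it suffices to determine $\Fitt^{[1]}_\RR(Z^0_{S'})$; multiplication by $\theta_S^\modif$ will then yield the stated formula. As in Remark \ref{rem:87}, I would first reduce to $K \cap k_\infty = k$, so that $\GG = H \times \Gamma_k$. The crucial consequence of the inertial-degree-one hypothesis is that for every $v \in S'$ the Frobenius $\sigma_v$ lifts canonically to an element of $\Gamma_k$, and the decomposition group splits as a direct product $\GG_v = \TT_v \times \langle \sigma_v \rangle$ with $\TT_v \subset H$ cyclic and $\langle \sigma_v \rangle \subset \Gamma_k$. In particular $\sigma_v - 1 \in \RR$ (not merely in the residue $\RR/\TT_v$), so the quantities $\nu_v/(\sigma_v - 1)$ that appear in the theorem are meaningful elements of $\Frac(\RR)$. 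Choosing $n$ so large that no place of $S'$ splits further in $K_\infty/K_n$, the group $G = \Gal(K_n/k) = H \times \Gal(k_n/k)$ and each $G_v \subset G$ become products of two cyclic groups, with one factor in each of $H$ and $\Gal(k_n/k)$.

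Next, following the strategy of Section \ref{subsec:a02}, I would construct a complex $D$ of finitely generated free $\RR$-modules, concentrated in non-negative degrees, exact except in degree $1$, with $H_1(D) \simeq Z^0_{S'}$, and then invoke Proposition \ref{prop:52}. Because of the product-of-two-cyclic-groups structure, I would take $C$ to be the tensor product over $\Z_p$ of two Tate-type complexes (as in Section \ref{subsec:a14}), one in the $\delta$-direction and one in the $\gamma_K$-direction, so that $H_0(C) = \Z_p$; for each $v \in S'$, the complex $C_v$ is defined analogously from the subgroups $\TT_v \subset H$ and $\langle \sigma_v \rangle \subset \Gal(k_n/k)$, base-changed to $\Z_p[G]$, so that $H_0(C_v) = Z_v$. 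The tensor-product structure yields a natural morphism $f \colon \bigoplus_{v \in S'} C_v \to C$ whose degree-zero component is the augmentation $\bigoplus_v \Z_p[G] \to \Z_p[G]$ and whose higher-degree components are built from transfer elements $1 + \delta + \cdots + \delta^{m_v - 1}$ as in Section \ref{subsec:a15}. Since Lemma \ref{lem:47} is unavailable in this generality and $f$ is not injective in degree $0$ (nor, in general, in higher degrees), I would define $D = \Cone(f)[-1]$ rather than $\Cok(f)$, so that $D$ is automatically a complex of free modules.

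To extract the Fitting ideal, I would write down the matrix $A$ representing $d_3 \colon D^3 \to D^2$ in the natural basis coming from the tensor-product decomposition, lift it to $\widetilde A$ over $\RR$, and apply the formula of Proposition \ref{prop:52}. The tensor-product structure organizes the minors of $\widetilde A$ according to a subset $J \subseteq S'$ recording which local complexes $C_v$ contribute non-trivially to a given minor. Proper subsets $J \subsetneq S'$ yield, via the Koszul-type relations $(\delta_v - 1)\nu_v = 0$ and the analogous $\Gamma_k$-side relations, generators proportional to $\prod_{v \in J} \nu_v/(\sigma_v - 1)$; the full subset $J = S'$ additionally pulls in contributions from the global complex $C$ itself, which after simplification account for the augmentation ideal $(\gamma - 1, \delta - 1) = \Fitt_\RR(\Z_p)$ appearing as an extra factor in the first listed generator.

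The principal obstacle will be carrying out the minor computation explicitly: the ranks of the $D^n$ grow with $|S'|$ and with $\sharp\TT_v$, and $\widetilde A$ has inhomogeneous entries mixing $\delta_v - 1$, $N_{\TT_v}$, $\sigma_v - 1$, and the transfer terms, so a systematic bookkeeping scheme is required. The underlying structural mechanism is the same as in the proof of Theorem \ref{thm:46} (with the cone replacing the cokernel), and the final list of generators is obtained by a careful inclusion-exclusion argument indexed by $J \subseteq S'$, where the trivial case $J = \emptyset$ accounts for the generator $1$ and all other cases are handled uniformly by the minor analysis just described.
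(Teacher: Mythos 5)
Your reduction is the right one (Theorem \ref{thm:28} plus a computation of $\Fitt^{[1]}_{\RR}(Z^0_{S'})$ via a cone of local complexes mapping to a global one), and your structural observation that the inertial-degree-one hypothesis gives a canonical lift of $\sigma_v$ to $\Gamma_K$, splitting $\GG_v$ compatibly with $\GG = H \times \Gamma_k$, is exactly the point the paper exploits. But there is a genuine gap: the entire content of the theorem is the explicit list of generators, and your last two paragraphs only assert that ``a careful inclusion-exclusion argument indexed by $J$'' will produce it. No minor analysis is actually carried out, and the route you choose --- free $\Z_p[G]$-resolutions built as tensor products of Tate-type complexes, followed by Proposition \ref{prop:52} applied to the matrix of $d_3\colon D^3 \to D^2$ --- is precisely the one that the paper's own Subsection \ref{subsec:a15} documents as becoming intractable: already for $s=2$ and a single place the authors obtain a $7\times 5$ matrix whose minors they ``gave up somewhere along the way'' describing. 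For general $r$, with each $C_v$ a tensor product of two Tate complexes and the transfer factors $\mu^{(j)}$ preventing the cokernel from being free, the ranks and the inhomogeneity of the entries grow in a way that makes it far from clear your computation would close up into the stated clean list. Asserting the answer is not proving it.

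The paper avoids this by \emph{not} resolving by free $\Z_p[G]$-modules at all. It uses the two-periodic resolutions of $Z_{S'}$ by $\bigoplus_i \RR/(\ttilde{\sigma_i}-1)$ (differentials alternating between $\delta_i - 1$ and $\nu_i$) and of $\Z_p$ by $\RR/(T)$, joined by a ladder whose vertical maps are the transfers $\mu_i$; the cone $D$ then has terms of projective dimension $\le 1$ over $\RR$ of total rank only $r+1$. One then splits off $\Fitt_{\RR}(\Ker(d_1)) = \prod_i(\ttilde{\sigma_i}-1)$ and reduces everything to the maximal minors of a single explicit $(2r+2)\times(r+1)$ presentation matrix $B$ for $\Cok(d_3)$, whose sparsity forces a clean two-case analysis (according to whether some index $l$ contributes both its $\nu_l$-row and its $(\ttilde{\sigma_l}-1)$-row, or not) yielding exactly the generators $T\prod_i \nu_i$, $(\delta-1)\prod_i\nu_i$, and $\prod_{i\in J}\nu_i\prod_{i\notin J}(\ttilde{\sigma_i}-1)$ for proper $J$. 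If you want to salvage your approach, you should either switch to these smaller non-free resolutions, or actually supply the bookkeeping scheme you invoke and verify that the surviving minors are the claimed ones --- the latter would require revisiting the kind of combinatorics that \cite{GKT19} needed for the matrix $\tilde M_r$.
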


\begin{proof}
We use the lift $\gamma_K \in \Gamma_K$ of $\gamma \in \Gamma$ to define $T = \gamma_K - 1 \in \RR$.
We number $S'=\{v_1,...,v_r\}$ and put $\GG_i = \GG_{v_i}$, $\TT_i = \TT_{v_i}$, and $\nu_i = \nu_{v_i} = \nu_{\TT_i}$ for simplicity.
By the assumption that the inertial degree of $v_i$ in $K/k$ is one,
the decomposition field of $v_i$ in $K_{\infty}/k$ is an intermediate field of
the cyclotomic $\Z_p$-extension $(K_{\infty})^{\TT_i}/ K^{\TT_i}$.
Hence there is a unique lift $\ttilde{\sigma_i} \in \Gamma_K$ of $\sigma_i = \sigma_{v_i} \in \GG/\TT_i$.
Put $\delta_i = \delta^{[H: \TT_i]}$, which is a generator of $\TT_i$. 
Then $\GG_v$ is generated by $\delta_i$ and $\ttilde{\sigma_i}$.

We have two exact sequences involving $\Z_{p}$ and 
$Z_{S'} = \bigoplus_{i=1}^{r} \Z_{p}[\GG/\GG_i]$, aligning
in a commutative ladder as follows: 
\[
\xymatrix{
\cdots \ar[r]^-{(\delta_{i}-1)_i}
  &  \bigoplus_{i=1}^{r} \RR/(\ttilde{\sigma_i}-1) 
                \ar[r]^{(\nu_{i})_i}   \ar[d]^{\nat}
  &  \bigoplus_{i=1}^{r} \RR/(\ttilde{\sigma_i}-1) 
	              \ar[r]^{(\delta_{i}-1)_i}   \ar[d]^{\sum_{i} \mu_i}
	&  \bigoplus_{i=1}^{r} \RR/(\ttilde{\sigma_i}-1) \ar[r]	\ar[d]	^{\nat}
	& Z_{S'} \ar[r] \ar[d]^{\nat}
	& 0  \\
 \cdots \ar[r]_-{\delta-1}
  & \RR/(T) \ar[r]_{\nu_H} 
  & \RR/(T) \ar[r]_{\delta-1}
  & \RR/(T) \ar[r] 
	& \Z_{p} \ar[r] 
	&  0
}.
\]
The second and the fourth vertical arrows from the right are the canonical maps, whose
existence follow from the fact that $T$ divides $\ttilde{\sigma_i}-1$.
(This would not have worked without assuming that $v_i$
has no inertia in $K/k$.)  
The third vertical arrow from the right  on
the $i$-th component comes from multiplication by 
$\mu_i = 1 + \delta + \dots + \delta^{[H: \TT_i]-1}$.

By the cone construction, we obtain a complex $D$ of the form
\[
\bigoplus_{i=1}^{r} \RR/(\ttilde{\sigma_i}-1) \oplus \RR/(T)
\overset{d_3}{\to} \bigoplus_{i=1}^{r} \RR/(\ttilde{\sigma_i}-1) \oplus \RR/(T)
\overset{d_2}{\to} \bigoplus_{i=1}^{r} \RR/(\ttilde{\sigma_i}-1) \oplus \RR/(T)
\overset{d_1}{\to} \RR/(T)
\to 0,
\]
where the term $\RR/(T)$ is located at degree $0$.
Here,
\begin{align}
d_1(x_{1},...,x_{r},y) &= (x_{1} {\bmod} T,...,x_{r} \bmod T,
(\delta-1)y),\\
d_2(x_{1},...,x_{r},y) &= (-(\delta_1-1)x_{1},...,-(\delta_r-1)x_r, 
\sum_{i=1}^{r} \mu_i x_{i} \bmod T + \nu_{H}y),\\
d_3(x_{1},...,x_{r},y) &= (-\nu_{1}x_{1},...,-\nu_r x_r, 
\sum_{i=1}^{r} x_{i} \ \mbox{mod} \ T + (\delta-1)y).
\end{align}
Recall that the degree $n$ component of $D$ is denoted by $D^n$.
By a property of the cone, the complex $D$ is exact except in degree $1$, and $H_1(D) \simeq Z_{S'}^0$.
Then as in the proof of Proposition \ref{prop:52}, we have short exact sequences
\[
0 \to \Ker(d_1) \to D^1 \to D^0 \to 0
\]
and
\[
0 \to \Cok(d_3) \to \Ker(d_1) \to Z_{S'}^0 \to 0.
\]
Since we have $\pd_{\RR}(D^n) \leq 1$ for any degree $n$, the first exact sequence implies $\pd_{\RR}(\Ker(d_1)) \leq 1$ and
\[
\Fitt_{\RR}(\Ker(d_1)) 
= \Fitt_{\RR}(D^1) \Fitt_{\RR}(D^0)^{-1} 
= \prod_{i=1}^r (\ttilde{\sigma_i}-1).
\]
Then the second exact sequence implies
\begin{equation}\label{Proofthm:47Equation1}
\Fitt_{\RR}^{[1]}(Z_{S'}^0) 
= \Fitt_{\RR}(\Ker(d_1))^{-1} \Fitt_{\RR}(\Cok(d_3)) 
= \left(\prod_{i=1}^r (\ttilde{\sigma_i}-1)^{-1}\right) \Fitt_{\RR}(\Cok(d_3)).
\end{equation}

By the description of $d_3$ above, we easily see that the module $\Cok(d_3)$ has a presentation as an $\RR$-module
\[
B = 
\begin{pmatrix}
-\nu_{1} &  &        &          &     1   \\
 & -\nu_{2} &        &          &     1   \\
 &          & \ddots &          & \vdots  \\
 &          &        & -\nu_{r} &    1    \\
 &          &        &          & \delta-1 \\
\ttilde{\sigma_{1}}-1 &  &        &     &      \\
 & \ttilde{\sigma_{2}}-1 &        &     &      \\
 &               & \ddots &     &      \\
 &               &        & \ttilde{\sigma_{r}}-1 &   \\
 &               &        &               & T \\
\end{pmatrix}
\]
(all blank entries being zero). 
To obtain $\Fitt_{\RR}(\Cok(d_3))$, we have to  
compute the maximal minors of $B$.
As a consequence, we shall show that $\Fitt_{\RR}(\Cok(d_3))$ is generated by the following elements:
\begin{equation}\label{eq:75}
T\prod_{i = 1}^r \nu_i, \qquad (\delta-1) \prod_{i = 1}^r \nu_i, 
\qquad \prod_{i \in J} \nu_i \prod_{i \not \in J} (\ttilde{\sigma_i}-1)
\end{equation}
where $J$ runs through all proper subsets of $\{1, 2, \dots, r\}$.

Let $V$ run through all subsets of $\{1, 2, \dots, 2r+2\}$ of cardinality $r+1$, 
and let $d_V$ be the determinant of the submatrix of $B$ picking up the $v$-th rows for $v \in V$ (we ignore the sign throughout).
For each $1 \leq i \leq r$, the $i$-th column in $B$ is zero except for the $i$-th and $(r+1+i)$-th rows.
Hence $d_V \neq 0$ only if, for each $1 \leq i \leq r$, we have either $i \in V$ or $r+1+i \in V$.
We divide the argument into two cases.

Case 1. There is (a unique) $1 \leq l \leq r$ such that both $l \in V$ and $r+1+l \in V$ hold.
In this case, putting $J = (V \cap \{1, 2, \dots, r\}) \setminus\{l\}$, we can see that
\[
d_V = \pm \prod_{i \in J} \nu_i \prod_{i \not \in J} (\ttilde{\sigma_i}-1),
\]
where $i \not \in J$ means $i$ runs through $\{1, 2, \dots, r\} \setminus J$.
In this way, we obtain the third family of elements in \eqref{eq:75}.

Case 2. For each $1 \leq i \leq r$, exactly one of $i \in V$ or $r+1+i \in V$ holds.
Put $J = \{1, 2, \dots, r\} \cap V$.
Then exactly one of $r+1 \in V$ or $2r+2 \in V$ holds, and accordingly we obtain
\[
d_V = \pm (\delta - 1) \prod_{i \in J} \nu_i \prod_{i \not \in J} (\ttilde{\sigma_i}-1),
\qquad d_V = \pm T \prod_{i \in J} \nu_i \prod_{i \not \in J} (\ttilde{\sigma_i}-1).
\]
These elements are in the ideal generated by \eqref{eq:75}.
Moreover, letting $V = \{1, 2, \dots, r, r+1\}$ and $V = \{1, 2, \dots, r, 2r+2\}$ respectively, we can produce the first two elements in \eqref{eq:75} among $d_V$.

We obtain Theorem \ref{thm:47} from \eqref{Proofthm:47Equation1}, \eqref{eq:75}, 
and Theorem \ref{thm:28}.
\end{proof}

{
\bibliographystyle{acm}
\bibliography{fitting_ref}

\begin{thebibliography}{10}

\bibitem{BuFl98}
D.~Burns and M.~Flach.
\newblock On {G}alois structure invariants associated to {T}ate motives.
\newblock {\em Amer. J. Math.}, 120:1343--1397, 1998.

\bibitem{BG03}
D.~Burns and C.~Greither.
\newblock Equivariant {W}eierstrass preparation and values of {$L$}-functions
  at negative integers.
\newblock {\em Doc. Math.}, pages 157--185, 2003.
\newblock Kazuya Kato's fiftieth birthday.

\bibitem{BKS16}
D.~Burns, M.~Kurihara, and T.~Sano.
\newblock On zeta elements for {$\Bbb G_m$}.
\newblock {\em Doc. Math.}, 21:555--626, 2016.

\bibitem{CiaGr98}
P.~Cornacchia and C.~Greither.
\newblock Fitting ideals of class groups of real fields with prime power
  conductor.
\newblock {\em J. Number Th.}, 73:459--471, 1998.

\bibitem{DR80}
P.~Deligne and K.~A. Ribet.
\newblock Values of abelian {$L$}-functions at negative integers over totally
  real fields.
\newblock {\em Invent. Math.}, 59(3):227--286, 1980.

\bibitem{Greither07}
C.~Greither.
\newblock Determining {F}itting ideals of minus class groups via the
  equivariant {T}amagawa number conjecture.
\newblock {\em Compos. Math.}, 143(6):1399--1426, 2007.

\bibitem{GK15}
C.~Greither and M.~Kurihara.
\newblock Tate sequences and {F}itting ideals of {I}wasawa modules.
\newblock {\em Algebra i Analiz}, 27(6):117--149, 2015.

\bibitem{GK17}
C.~Greither and M.~Kurihara.
\newblock Fitting ideals of {I}wasawa modules and of the dual of class groups.
\newblock {\em Tokyo J. Math.}, 39(3):619--642, 2017.

\bibitem{GKT19}
C.~Greither, M.~Kurihara, and H.~Tokio.
\newblock The second syzygy of the trivial $g$-module, and an equivariant main
  conjecture.
\newblock {\em to appear in the proceedings of Iwasawa 2017, 
Tokyo}, 2020.

\bibitem{GP15}
C.~Greither and C.~D. Popescu.
\newblock An equivariant main conjecture in {I}wasawa theory and applications.
\newblock {\em J. Algebraic Geom.}, 24(4):629--692, 2015.

\bibitem{Kak2013}
M.~Kakde.
\newblock The main conjecture of {I}wasawa theory for totally real fields.
\newblock {\em Invent. Math.}, 193:539--626, 2013.

\bibitem{Kata}
T.~Kataoka.
\newblock Fitting invariants in equivariant {I}wasawa theory.
\newblock {\em to appear in the proceedings of 
Iwasawa 2017 Tokyo}, 2020, arXiv:2006.04789 

\bibitem{KM76}
F.~F. Knudsen and D.~Mumford.
\newblock The projectivity of the moduli space of stable curves. {I}.
  {P}reliminaries on ``det'' and ``{D}iv''.
\newblock {\em Math. Scand.}, 39(1):19--55, 1976.

\bibitem{Kur11}
M.~Kurihara.
\newblock On stronger versions of {B}rumer's conjecture.
\newblock {\em Tokyo J. Math.}, 34(2):407--428, 2011.

\bibitem{Kur14}
M.~Kurihara.
\newblock Notes on the dual of the ideal class groups of {CM}-fields.
\newblock preprint, 2014.

\bibitem{Nek}
J.~Nekov\'{a}\v{r}.
\newblock Selmer complexes.
\newblock {\em Ast\'{e}risque}, 310:viii+559, 2006.

\bibitem{Nic13}
A.~Nickel.
\newblock Equivariant {I}wasawa theory and non-abelian {S}tark-type
  conjectures.
\newblock {\em Proc. Lond. Math. Soc. (3)}, 106(6):1223--1247, 2013.

\bibitem{RW04}
J.~Ritter and A.~Weiss.
\newblock Toward equivariant {I}wasawa theory. {II}.
\newblock {\em Indag. Math. (N.S.)}, 15(4):549--572, 2004.

\bibitem{RiWe2011}
J.~Ritter and A.~Weiss.
\newblock On the ``main conjecture'' of equivariant {I}wasawa theory.
\newblock {\em J. Amer. Math. Soc.}, 24(4):1015--1050, 2011.

\bibitem{Serre78}
J.-P. Serre.
\newblock Sur le r\'{e}sidu de la fonction z\^{e}ta {$p$}-adique d'un corps de
  nombres.
\newblock {\em C. R. Acad. Sci. Paris S\'{e}r. A-B}, 287(4):A183--A188, 1978.

\bibitem{Ta66}
J.~Tate.
\newblock The cohomology group of tori in finite {G}alois extensions of number
  fields.
\newblock {\em Nagoya Math. J.}, 27:709--719, 1966.

\end{thebibliography}
}

\end{document}